\newcommand{\PreserveBackslash}[1]{\let\temp=\\#1\let\\=\temp}
\newcolumntype{C}[1]{>{\PreserveBackslash\centering}p{#1}}
\newcolumntype{R}[1]{>{\PreserveBackslash\raggedleft}p{#1}}
\newcolumntype{L}[1]{>{\PreserveBackslash\raggedright}p{#1}}
\def\wbar{\accentset{{\cc@style\underline{\mskip8mu}}}}
\numberwithin{equation}{section}
\theoremstyle{plain}
\newtheorem{theorem}{Theorem}
\newtheorem{defn}{Definition}
\newtheorem{lemma}{Lemma}
\newtheorem{remark}{Remark}
\newtheorem{cor}{Corollary}
\newtheorem{pro}{Proposition}
\newtheorem{class}{Class}
\newcommand{\res}{\ensuremath{\mathrm{Res_0}}}
\begin{document}
\bibliographystyle{unsrt}
\title{Non-crossing chords of a polygon with forbidden positions}
\author{Dongyi Wei\footnotemark[1],
\and Demin Zhang\footnotemark[2],
\and Dong Zhang\footnotemark[3]}
\renewcommand{\thefootnote}{\fnsymbol{footnote}}
\footnotetext[1]{School of Mathematical Sciences and BICMR, Peking University, Beijing 100871, P. R. China.\\
Email addresses:
{\tt jnwdyi@163.com} (Dongyi Wei).
}
\footnotetext[2]{Zhaipo middle school, Xinxiang 453700, Henan, P. R. China.
Email addresses: {\tt 13569444933@139.com} (Demin Zhang).
}
\footnotetext[3]{LMAM and School of Mathematical Sciences, Peking University, Beijing 100871, P. R. China.\\
Email addresses:
{\tt dongzhang@pku.edu.cn} \, or \, {\tt 13699289001@163.com} (Dong Zhang).
}
\date{}
\maketitle

\begin{abstract}
In this paper, we systematically study non-crossing chords of simple polygons in the plane.
We first introduce the reduced Euler characteristic of a family of line-segments, and subsequently investigate the structure of the diagonals and epigonals of a polygon. Interestingly enough, the reduced Euler characteristic of a subfamily of diagonals and epigonals characterizes the geometric convexity of polygons.  In particular, an alternative and complete answer is given for a problem proposed by G. C. Shephard. Meanwhile, we extend such research to non-crossing diagonals and epigonals with forbidden positions in some appropriate sense. We prove that the reduced Euler characteristic of diagonals with forbidden positions only depends on the information involving  convex partitions by those forbidden diagonals, and it determines the shapes of polygons in a surprising way. Incidentally,  some kinds of generalized Catalan's numbers  naturally arise.
\vspace{0.3cm}

\noindent\textbf{Keywords:}
reduced Euler characteristic, polygon, diagonal, Catalan's number.

\vspace{0.3cm}
\noindent\textbf{AMS subject classifications:}
 51E12, 05B25,  51D20,  51E30.

\end{abstract}

\allowdisplaybreaks
\maketitle

\section{Introduction}\label{sec:intro}

A {\sl polygon} is a closed curve, composed of a finite sequence of straight line segments. These segments are called its edges, and the points where two edges meet are the polygon's vertices. For simplicity, we restrict ourselves to simple polygons (no self-intersecting) whose vertices are in general position (no three vertices are collinear).

Given a polygon $P$, a {\sl chord} is a segment whose endpoints are non-consecutive vertices of $P$. A chord is called a {\sl diagonal} (resp., {\sl epigonal}) if it lies in the interior (resp., exterior) of $P$.

Suppose $P$ has $n$ vertices, which we will symbolically denote by $|P|=n$, where $n\ge 4$. Let $d_1$ be the number of diagonals, $d_2$ be the number of non-crossing  pairs of diagonals, and, in general, $d_i$ be the number of sets of $i$ diagonals of the polygon which are pairwise non-crossing. Particularly, $d_{n-3}$ is the number of triangulations of $P$, and for any $n\ge 4$, there exist polygons satisfying $d_{n-3}=1$, such as polygons in Class \ref{ex:class2} (see Fig.~\ref{fig:class2} below). The number $e_i$ is defined in a similar manner for epigonals, $i=1,2,\ldots,n-3$.  By these definitions, $e_1$ stands for the number of epigonals of $P$, and thus $e_1>0$ represents the non-convexity of $P$.   Besides, we have $d_i=e_i=0$ if $i>n-3$, and we always set $d_0=e_0=1$.
\begin{class}
\label{ex:class2}\rm
This is the family of all non-convex polygons with only three (consecutive) angles which are less than $\pi$. Such polygonal region can be obtained by deleting a convex polygonal region from a triangular region (see Fig.~\ref{fig:class2} below).
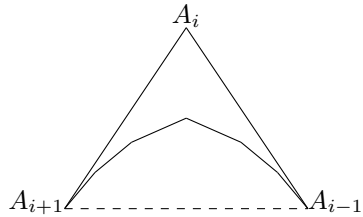
\begin{figure}[H]
\centering
\begin{tikzpicture}[scale=0.8]
\draw (0,0) to (-2,-3);
\draw (0,0) to (2,-3);
\draw (-2,-3) to (-1.5,-2.4);
\draw (2,-3) to (1.5,-2.4);
\draw (-0.9,-1.9) to (-1.5,-2.4);
\draw (0.9,-1.9) to (1.5,-2.4);
\draw (-0.9,-1.9) to (0,-1.5);
\draw (0.9,-1.9) to (0,-1.5);
\draw[dashed] (-2,-3) to (2,-3);
\node (Ai) at (0,0.2) {$A_i$};
\node (Ai+1) at (-2.46,-2.9) {$A_{i+1}$};
\node (Ai-1) at (2.46,-2.9) {$A_{i-1}$};
\end{tikzpicture}
\label{fig:class2}\caption{Illustration for polygons described in Class \ref{ex:class2}. In this polygon, only the  angles at the three vertices $A_{i-1}$, $A_i$ and $A_{i+1}$ are less than $\pi$.}
\end{figure}

\end{class}

\begin{defn}
Let $P$ be a simple polygon whose vertices are in general position. 
Now we define the {\sl reduced Euler characteristics} $\chi_d(P)=\sum_{i=0}^\infty (-1)^id_i$ and $\chi_e(P)=\sum_{i=0}^\infty (-1)^ie_i$.
\end{defn}
\begin{theorem}\label{th:main}
Let $P_n$ be a simple polygon with $n$ vertices  in general position. If $P_n$ is convex, then $\chi_d(P_n)=(-1)^{n+1}$ and $\chi_e(P_n)=1$. Otherwise, $\chi_d(P_n)=\chi_e(P_n)=0$.
\end{theorem}


It was already known that the complex of non-crossing diagonals of a convex polygon was spherical and so the convex case of Theorem \ref{th:main}  was true and a proof was published by Lee \cite{Lee}, as well as an alternative easier proof could be found in \cite{ZWZ}. Moreover, the first conclusion  $\chi_d(P_n)=(-1)^{n+1}$ in Theorem \ref{th:main}  is indeed the Euler-Poincar\'e formula for the associahedron, and as an extended version,  
we generalize this result to Theorem \ref{d_k}.

The non-convex case of Theorem \ref{th:main}  was proposed by Shephard \cite{Shephard}, and its first proof   was given by Braun and Ehrenborg \cite{BE2010}. In fact, they prove the simplicial complex of non-crossing diagonals in a polygon is a sphere or a ball of the expected dimension.
In Section \ref{sec:Euler-characteristic-nonconvex}, we give a new proof of  $\chi_d(P_n)=0$ and further prove $\chi_e(P_n)=0$ for non-convex case, and thus complete the proof of Theorem \ref{th:main}.

The main aim of this paper is to study non-crossing chords of simple polygons with restricted or  forbidden positions. Let $F$ be a set of finite points which are in general position in the plane, and let $M$ be a subset of  line-segments with end-points in $F$. The family of the sets of non-crossing segments in $M$ is denoted by
 $$NC[M]:=\{ J\subset M: \text{ the segments in }J\text{ are pairwise non-crossing}\}\cup\{\varnothing\},$$
 and the related counting numbers are $\nu_i(M):=\# \{ J\in NC[M]:\#J=i\}$, $i=0,1,\ldots$, where $\nu_0(M)=1$, and $\#$ is the counting function acting on finite sets. Denote by $\chi(M):=\sum_{i=0}^\infty (-1)^i\nu_i(M)$ the {\sl reduced Euler characteristic} of $M$. 

Now we concentrate on some polygons with restricted number of vertices, which can be viewed as a generalization of the convex case of Theorem \ref{th:main}.
\begin{defn}
Given $a\in \mathbb{N}^+$ and a polygon $P$ with $|P|=a(n+1)+2$ for some $n\in \mathbb{N}$, a diagonal of $P$ is said to be an {\sl $a$-diagonal} if there are $ka$ vertices between its two endpoints for some $k\in \mathbb{N}^+$. Let $M_d^a$ be the set of $a$-diagonals of $P$.
\end{defn}

\begin{theorem}\label{d_k}
Given $a\in \mathbb{N}^+ $ and $n\in \mathbb{N}^+$, let $P$ be a convex polygon with $a(n+1)+2$ vertices, and let $d_i(n,a)=\nu_i(M_d^a)$, $i=1,2,\ldots$. Then the reduced Euler characteristic $\chi(M_d^a)$ can be simplified as $(-1)^nd_n(n,a-1)$. Furthermore,  we have an inductive formula
$$d_k(n,a)=\frac{a(n+1)+2}{2k}\sum_{i_1+i_2=n-1}\sum_{j_1+j_2=k-1}d_{j_1}(i_1,a)d_{j_2}(i_2,a),$$
and then we obtain a closed formula $d_k(n,a)=\frac{1}{k+1}{a(n+1)+k+1\choose k}{n\choose k}$ for any $k\in \mathbb{N}^+$.
\end{theorem}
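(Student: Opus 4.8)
The plan is to rest everything on a single structural observation about $a$-diagonals and then run a short generating-function computation that delivers all three assertions at once, supplemented by one independent double-count for the recurrence.

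First I would record the elementary combinatorics. A chord $A_pA_q$ ($p<q$) is an $a$-diagonal exactly when each of its two boundary arcs carries a number of vertices divisible by $a$ (equivalently $q-p\equiv 1\pmod a$), so any family of $k$ pairwise non-crossing $a$-diagonals cuts $P$ into $k+1$ faces, each an $(am_i+2)$-gon with $m_i\ge 1$ and $\sum_{i=1}^{k+1}m_i=n+1$. In particular maximal families use exactly $k=n$ diagonals, so $\chi(M_d^a)=\sum_{k=0}^{n}(-1)^k d_k(n,a)$ is a finite sum. The feature I would extract is \emph{locality}: if a chord lies in a sub-polygon cut off by an $a$-diagonal, its arc counts inside that sub-polygon agree with those in $P$, so being an $a$-diagonal is intrinsic to each face. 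This makes both a diagonal-rooted and an edge-rooted decomposition available.

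For the recurrence (2) I would count pairs $(\mathcal D,\vec\delta)$, where $\mathcal D$ is a non-crossing family of $k$ $a$-diagonals and $\vec\delta$ is one of its diagonals given an orientation; there are $2k\,d_k(n,a)$ of these. Counting instead by the position of $\vec\delta$: for each admissible split there are exactly $N=a(n+1)+2$ oriented diagonals whose counter-clockwise arc carries $(i_1+1)a$ vertices, and each separates $P$ into an $(a(i_1+1)+2)$-gon and an $(a(i_2+1)+2)$-gon with $i_1+i_2=n-1$, into which $\mathcal D$ restricts independently. This gives $2k\,d_k(n,a)=N\sum_{i_1+i_2=n-1}\sum_{j_1+j_2=k-1}d_{j_1}(i_1,a)d_{j_2}(i_2,a)$, exactly (2); the factor $2k$ comes from orienting each of the $k$ diagonals, and the factor $N$ from the cyclic symmetry that makes the count of oriented diagonals of each arc-type independent of that type.

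For the closed form (3) I would pass to generating functions via the edge-rooted decomposition. Rooting at a fixed edge, the face meeting it is an $(am+2)$-gon carrying $am+1$ pendant sub-dissections; summing the geometric series over the root-face size $m$ gives, for $Q(z,y)=1+y\sum_{n,k}d_k(n,a)z^{n+1}y^{k}$, the functional equation $Q-1=zQ^{a}\bigl((1+y)Q-1\bigr)$. The crucial simplification is the factorization $(1+y)(1+t)^{a+1}-(1+t)^{a}=(1+t)^{a}\bigl(t+y(1+t)\bigr)$, after which Lagrange inversion yields $[z^{n+1}]Q=\frac1{n+1}[t^{n}]\bigl((1+t)^{a(n+1)}(t+y(1+t))^{n+1}\bigr)$; reading off the coefficient of $y^{k+1}$ and then of $t^{n}$ produces $d_k(n,a)=\frac1{n+1}\binom{n+1}{k+1}\binom{a(n+1)+k+1}{k}$, which becomes the stated formula after rewriting $\frac1{n+1}\binom{n+1}{k+1}=\frac1{k+1}\binom nk$.

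Finally, for the Euler characteristic (1) I would set $y=-1$ in the same functional equation, where the right-hand side collapses to $Q-1=-zQ^{a}$; since $\sum_k(-1)^k d_k(n,a)=[z^{n+1}]\bigl(zQ^{a}\bigr)\big|_{y=-1}$, a second Lagrange inversion applied to $Q=1-zQ^{a}$ gives $\chi(M_d^a)=\frac{a(-1)^n}{n}\binom{a(n+1)-1}{n-1}$, and a short binomial manipulation identifies this with $(-1)^n\frac1{n+1}\binom{a(n+1)}{n}=(-1)^n d_n(n,a-1)$. The step I expect to be the genuine obstacle is pinning down the edge-rooted functional equation rigorously — verifying that the pendant sub-dissections are independent and correctly indexed so that the sum over $m$ is exactly $zQ^{a+1}/(1-zQ^{a})$. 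Once that equation is secured, both the closed form and the Euler characteristic are routine Lagrange-inversion extractions, and the recurrence stands on its own as the double count above.
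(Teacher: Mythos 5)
Your proposal is correct, and it reaches all three claims by a genuinely different and more self-contained route than the paper. The paper proves the closed form $d_k(n,a)=\frac{1}{k+1}\binom{n}{k}\binom{a(n+1)+k+1}{k}$ by citing Corollary 6 of \cite{ZWZ} (the count of dissections of $P_{a(n+1)+2}$ into sub-polygons of prescribed sizes) and then counting the compositions $i_1+\cdots+i_{k+1}=n+1$; it proves $\chi(M_d^a)=(-1)^n d_n(n,a-1)$ by a residue-theorem manipulation of the alternating sum of these explicit binomials (Proposition \ref{pro:generalLee}), so there the Euler characteristic is downstream of the closed form; and it omits the proof of the recurrence entirely, calling it standard. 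You instead prove the recurrence by the oriented-diagonal double count (which is precisely the ``standard'' argument the paper waves at, and your bookkeeping of the factors $2k$ and $a(n+1)+2$ is right), and you derive the other two claims from a single edge-rooted functional equation $Q-1=zQ^a\bigl((1+y)Q-1\bigr)$: Lagrange inversion gives $d_k(n,a)=\frac{1}{n+1}\binom{n+1}{k+1}\binom{a(n+1)+k+1}{k}$, which equals the stated formula, while the specialization $y=-1$ collapses the equation to $Q=1-zQ^a$ and yields $\chi(M_d^a)=\frac{a(-1)^n}{n}\binom{a(n+1)-1}{n-1}=(-1)^n\frac{1}{n+1}\binom{a(n+1)}{n}=(-1)^n d_n(n,a-1)$ without invoking the closed form at all. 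Your flagged obstacle, the functional equation, does hold: by induction on the number of diagonals every face of an $a$-dissection is an $(am+2)$-gon, locality makes the $am+1$ pendant structures on the root face independent, and the vertex count $m+\sum_i\mu_i=n+1$ matches the $z$-grading, so $F-1=\sum_{m\ge 1}z^m\bigl(1+y(F-1)\bigr)^{am+1}$, which rearranges to your equation with $Q=1+y(F-1)$. What each approach buys: yours needs no external citation and unifies the three statements around one equation; the paper's is shorter given \cite{ZWZ}, and its residue computation is direct. Notably, your machinery closely parallels the paper's own proof of Proposition \ref{pro:calatanequality}, where the method of characteristics produces the implicit relation $y=v((1-xv)^a-x)$ --- essentially the same algebraic relation your Lagrange inversion exploits --- but the paper uses it only to verify a consistency identity, not to prove the theorem. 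Two cosmetic slips: the geometric series over the root-face size equals $F-1=(Q-1)/y$, so $Q-1=yzQ^{a+1}/(1-zQ^a)$ rather than $zQ^{a+1}/(1-zQ^a)$ (your displayed functional equation is nonetheless the correct rearrangement); and since $\psi(0)=y$ is not invertible in $\mathbb{Q}[[y]]$, the Lagrange inversion at generic $y$ should be performed over $\mathbb{Q}(y)$, the resulting coefficient identities being polynomial in $y$ and hence valid.
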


Note that every $J\in NC[M_d]$ provides a partition of $P$ by non-crossing diagonals. Given $M\subset M_d$, let
$$NC_c[M]=\{J\in NC[M]\colon\,J\text{ provides a convex partition of }P\},$$
and let $NC_{nc}[M]=NC[M]\setminus NC_c[M]$. It is noteworthy that the reduced Euler characteristic of a set of diagonals only depends on the corresponding convex partitions (see Theorem \ref{th:2} below). This plays a central role in the development of our ideas and results. 
\begin{theorem}\label{th:2}
Given $J\in NC[M_d]$, then there holds
\begin{equation}\label{eq:first-main}
\chi(M_d\setminus J)=(-1)^{|P|+1}\sum_{I\in NC_c[J]} (-1)^{\#I}.
\end{equation}
Moreover, we have the following conclusions:
\begin{enumerate}[(1)]
\item If $J\in NC_{nc}[M_d]$, then $\chi(M_d\setminus J)=0$.

\item Suppose $J\in NC_c[M_d]$, then the following statements hold.
 \begin{enumerate}[({2}a)]
\item 
If  $NC_c[J]$  has a unique minimal set $J'$, then $\chi(M_d\setminus J)=\begin{cases}0,&\text{ if }\,J'\ne J,\\
(-1)^{|P|+1+\# J},&\text{ if }\,J'=J.
\end{cases}$

\item If $J$ is not the union of all the minimal sets in $NC_c[J]$, then $\chi(M_d\setminus J)=0$.

\item If $J'\subset \bigcap_{I\in NC_c[J]}I$, then $\chi(M_d\setminus J)=\prod_{k=1}^m\chi(M_d(P^k)\setminus J)$, where $m=\#J'+1$, and $P^1,\ldots,P^m$ are the sub-polygons divided by $J'$.
\end{enumerate}
\end{enumerate}
\end{theorem}

Theorem \ref{th:2} could be used to determine the type of polygons with $\chi(M_d\setminus J)\ne 0$ for some fixed $J\in NC[M_d]$.
As an application, the following proposition indicates the fruitfulness of the topologies of simplicial complexes related to restricted diagonals of polygons. We construct a family of polygons to realize the proof.

\begin{pro}\label{pro:every}
 For every $l\in \mathbb{Z}$, there exists a polygon $P$ and $J\in NC[M_d]$ such that $\chi(M_d\setminus J)=l$.
\end{pro}

We provide Theorem \ref{th:main1} as a non-trivial application of Theorem \ref{th:2}  which also possesses independent interest in the study of typical polygons. 
First, we list a zoo of polygons which will be used in the next result.

\begin{class}\label{ex:class1}\rm
This is a special family of non-convex polygons with only one angle larger than $\pi$. For detailed descriptions, these polygons possess the properties that $\angle A_{i+1}A_iA_{i-1}>\pi>\angle A_{i+2}A_iA_{i-2}$, and $(\angle A_{i+2}A_i A_{i-1}-\pi)(\angle A_{i+1}A_iA_{i-2}-\pi)>0$ (see Figs.~\ref{fig:class1-1} and \ref{fig:class1-2}).

The class of such polygons is a special subclass of Class \ref{ex:class5} with a restriction that the unique special vertex $A_i$ lies in the region I or region III (see Fig.~\ref{fig:a}).
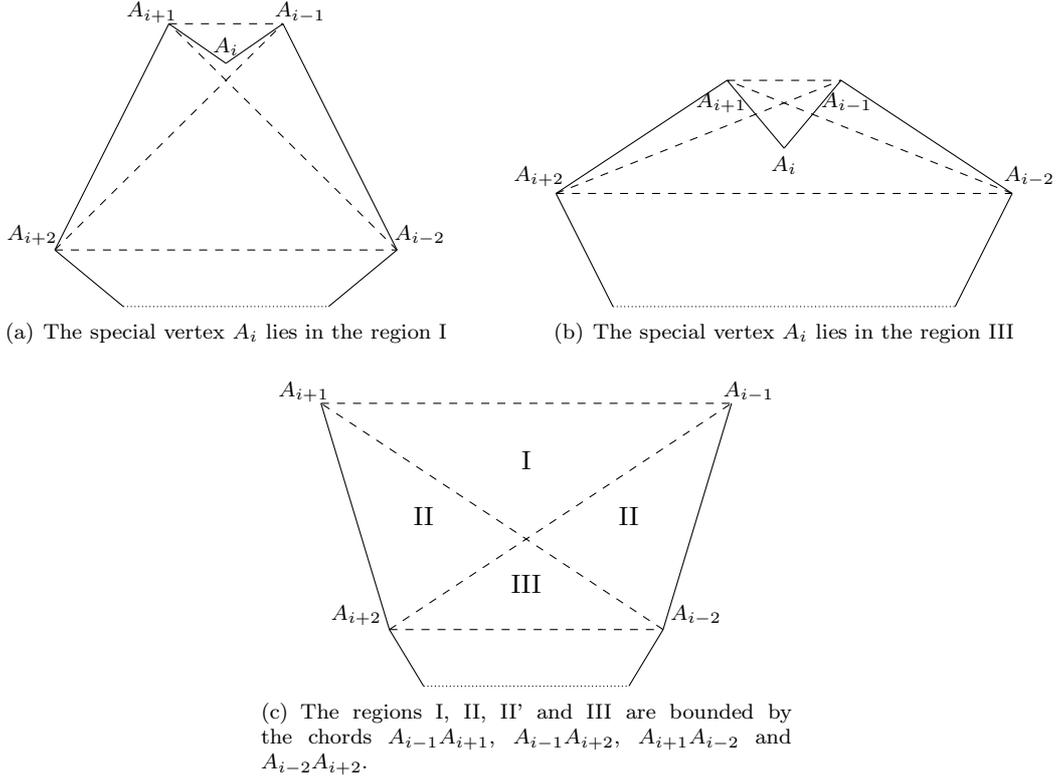
\begin{figure}[H]\centering
\subfigure[The special vertex $A_i$ lies in the region I]{
\begin{tikzpicture}[scale=1.5]
\draw (0,1.65) to (-0.5,2);
\draw (0,1.65) to (0.5,2);
\draw (-1.5,0) to (-0.5,2);
\draw (1.5,0) to (0.5,2);
\draw (-0.9,-0.5) to (-1.5,0);
\draw (0.9,-0.5) to (1.5,0);
\draw[densely dotted] (-0.9,-0.5) to (0.9,-0.5);
\draw[dashed] (1.5,0) to (-0.5,2);
\draw[dashed] (-1.5,0) to (0.5,2);
\draw[dashed] (-0.5,2) to (0.5,2);
\draw[dashed] (-1.5,0) to (1.5,0);
\node (Ai) at (0,1.8) {\footnotesize  $A_i$};
\node (Ai+1) at (-0.65,2.1) {\footnotesize $A_{i+1}$};
\node (Ai-1) at (0.65,2.1) {\footnotesize $A_{i-1}$};
\node (Ai+2) at (-1.7,0.13) {\footnotesize $A_{i+2}$};
\node (Ai-2) at (1.7,0.13) {\footnotesize $A_{i-2}$};
\end{tikzpicture}\label{fig:class1-1}
}
\;\;
\subfigure[The special vertex $A_i$ lies in the region III]{

\begin{tikzpicture}[scale=1.5]
\draw (0,0.4) to (-0.5,1);
\draw (0,0.4) to (0.5,1);
\draw (-2,0) to (-0.5,1);
\draw (2,0) to (0.5,1);
\draw (-1.5,-1) to (-2,0);
\draw (1.5,-1) to (2,0);
\draw[densely dotted] (-1.5,-1) to (1.5,-1);
\draw[dashed] (2,0) to (-0.5,1);
\draw[dashed] (-2,0) to (0.5,1);
\draw[dashed] (-0.5,1) to (0.5,1);
\draw[dashed] (-2,0) to (2,0);
\node (Ai) at (0,0.25) {\footnotesize  $A_i$};
\node (Ai+1) at (-0.55,0.8) {\footnotesize $A_{i+1}$};
\node (Ai-1) at (0.55,0.8) {\footnotesize $A_{i-1}$};
\node (Ai+2) at (-2.15,0.15) {\footnotesize $A_{i+2}$};
\node (Ai-2) at (2.15,0.15) {\footnotesize $A_{i-2}$};
\end{tikzpicture}\label{fig:class1-2}
}

\subfigure[The regions I, II, II' and III are bounded by the chords $A_{i-1}A_{i+1}$, $A_{i-1}A_{i+2}$, $A_{i+1}A_{i-2}$ and $A_{i-2}A_{i+2}$.]{
\begin{tikzpicture}[scale=1.5]
\draw (-1.2,0) to (-1.8,2);
\draw (1.2,0) to (1.8,2);
\draw (-0.9,-0.5) to (-1.2,0);
\draw (0.9,-0.5) to (1.2,0);
\draw[densely dotted] (-0.9,-0.5) to (0.9,-0.5);
\draw[dashed] (1.2,0) to (-1.8,2);
\draw[dashed] (-1.2,0) to (1.8,2);
\draw[dashed] (-1.8,2) to (1.8,2);
\draw[dashed] (-1.2,0) to (1.2,0);
\node (Ai+1) at (-1.95,2.1) {\footnotesize $A_{i+1}$};
\node (Ai-1) at (1.95,2.1) {\footnotesize $A_{i-1}$};
\node (Ai+2) at (-1.49,0.13) {\footnotesize $A_{i+2}$};
\node (Ai-2) at (1.49,0.13) {\footnotesize $A_{i-2}$};
\node (I) at (0,1.5) {  I};
\node (II1) at (-0.9,1) {  II};
\node (II2) at (0.9,1) {  II};
\node (III) at (0,0.4) {  III};
\end{tikzpicture}\label{fig:a}
}
\label{fig:class1}\caption{Illustration for polygons described in Class \ref{ex:class1}.}
\end{figure}

\end{class}

\begin{class}\label{ex:class3}\rm
This class of polygons are constructed in an elementary manner, where each polygonal region can be obtained by deleting a triangle region or a polygonal region in Class \ref{ex:class2} along an edge (or two neighbouring edges) of a convex polygonal region (see Fig.~\ref{fig:class3}).
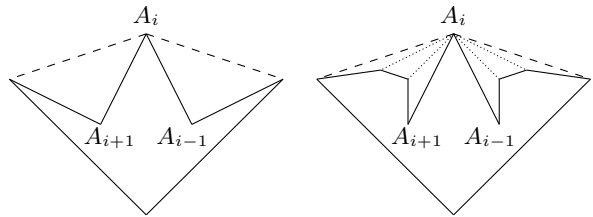
\begin{figure}[H]
\begin{center}
\begin{tikzpicture}[scale=1.2]
\draw (0,0) to (-0.5,-1);
\draw (0,0) to (0.5,-1);
\draw (-1.5,-0.5) to (-0.5,-1);
\draw (1.5,-0.5) to (0.5,-1);
\draw (-1.5,-0.5) to (0,-2);
\draw (1.5,-0.5) to (0,-2);
\draw[dashed] (0,0) to (-1.5,-0.5);
\draw[dashed] (0,0) to (1.5,-0.5);
\node (Ai) at (0,0.2) {\small $A_i$};
\node (Ai-1) at (0.4,-1.15) {\small $A_{i-1}$};
\node (Ai+1) at (-0.4,-1.15) {\small $A_{i+1}$};
\end{tikzpicture}
\;\;
\begin{tikzpicture}[scale=1.2]
\draw (0,0) to (-0.5,-1);
\draw (0,0) to (0.5,-1);
\draw (-0.5,-0.5) to (-0.5,-1);
\draw (0.5,-0.5) to (0.5,-1);
\draw (-0.5,-0.5) to (-0.8,-0.4);
\draw (0.5,-0.5) to (0.8,-0.4);
\draw (-1.5,-0.5) to  (-0.8,-0.4);
\draw (1.5,-0.5) to (0.8,-0.4);
\draw (-1.5,-0.5) to (0,-2);
\draw (1.5,-0.5) to (0,-2);
\draw[dashed] (0,0) to (-1.5,-0.5);
\draw[dashed] (0,0) to (1.5,-0.5);
\draw[densely dotted] (0,0) to (-0.5,-0.5);
\draw[densely dotted] (0,0) to (0.5,-0.5);
\draw[densely dotted] (0,0) to (-0.8,-0.4);
\draw[densely dotted] (0,0) to (0.8,-0.4);
\node (A) at (0,0.2) {\small $A_i$};
\node (Ai-1) at (0.4,-1.15) {\small $A_{i-1}$};
\node (Ai+1) at (-0.4,-1.15) {\small $A_{i+1}$};
\end{tikzpicture}
\end{center}
\label{fig:class3}\caption{Illustration for polygons described in Class \ref{ex:class3}. Such polygons satisfy $\angle A_{i-1}A_iA_{i+1}< \pi$.}
\end{figure}
\end{class}

\begin{class}\label{ex:class4}\rm
This class of polygons are constructed in an elementary manner, in which each  polygonal regions can be obtained by gluing a triangle region and a convex polygonal region along the edge $A_{i+1}A_{i-1}$ (see Fig.~\ref{fig:class4}).
\begin{figure}[H]
\centering
\begin{tikzpicture}[auto]
\draw (0,2) to (-0.5,1);
\draw (0,2) to (0.5,1);
\draw (-2,0) to (-0.5,1);
\draw (2,0) to (0.5,1);
\draw (0,-1) to (-2,0);
\draw (0,-1) to (2,0);
\draw[dashed] (0.5,1) to (-0.5,1);
\node (Ai) at (0,1.65) {\small $A_i$};
\node (Ai-1) at (0.85,1.1) {\small $A_{i-1}$};
\node (Ai+1) at (-0.85,1.1) {\small $A_{i+1}$};
\end{tikzpicture}
\label{fig:class4}\caption{Illustration for polygons described in Class \ref{ex:class4}. Such polygons satisfy $\angle A_{i-1}A_iA_{i+1}< \pi$.}
\end{figure}
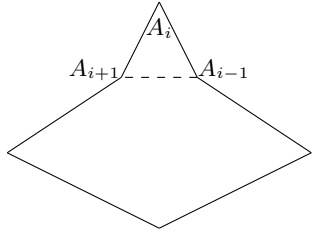
\end{class}

\begin{class}\label{ex:class5}\rm
This class of polygons are constructed in an elementary manner, in which each polygonal regions can be obtained by deleting a triangle region from a convex polygonal region along the edge $A_{i+1}A_{i-1}$ (see Fig.~\ref{fig:class5}).
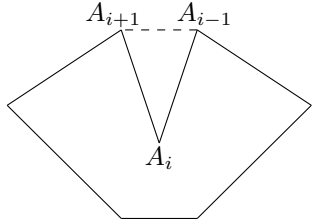
\begin{figure}[H]
\centering
\begin{tikzpicture}[auto]
\draw (0,-0.5) to (-0.5,1);
\draw (0,-0.5) to (0.5,1);
\draw (-2,0) to (-0.5,1);
\draw (2,0) to (0.5,1);
\draw (-0.5,-1.5) to (-2,0);
\draw (0.5,-1.5) to (2,0);
\draw (-0.5,-1.5) to(0.5,-1.5);
\draw[dashed] (0.5,1) to (-0.5,1);
\node (A) at (0,-0.7) {$A_i$};
\node (Ai-1) at (0.6,1.2) {$A_{i-1}$};
\node (Ai+1) at (-0.6,1.2) {$A_{i+1}$};
\end{tikzpicture}
\label{fig:class5}\caption{Illustration for polygons described in Class \ref{ex:class5}.  Such polygons satisfy $\angle A_{i-1}A_iA_{i+1}> \pi$.}
\end{figure}
\end{class}

\begin{class}\label{ex:class6}\rm
This family of polygonal regions can be obtained by gluing one (or two) polygonal region in Class \ref{ex:class2} and a polygonal region in Class \ref{ex:class1} (see Fig.~\ref{fig:class6}). 
\begin{figure}[H]
\centering
\begin{tikzpicture}[scale=1.5]
\draw (0,0) to (-0.5,-1);
\draw (0,0) to (0.5,-1);
\draw (-0.5,-0.5) to (-0.8,-0.4);
\draw (0.5,-0.5) to (0.8,-0.4);
\draw (-1.5,-0.5) to  (-0.8,-0.4);
\draw (1.5,-0.5) to (0.8,-0.4);
\draw (-1.5,-0.5) to (-0.9,1.2);
\draw (1.5,-0.5) to ( 0.9,1.2);
\draw (-0.9,1.2) to (0,1.2);
\draw ( 0.9,1.2) to (0,1.2);
\draw[densely dotted] (0,0) to (-0.5,-0.5);
\draw[densely dotted] (0,0) to (0.5,-0.5);
\draw (-0.4,-0.64) to (-0.5,-0.5);
\draw (0.4,-0.64) to (0.5,-0.5);
\draw (-0.4,-0.64) to (-0.5,-1);
\draw (0.4,-0.64) to (0.5,-1);
\draw[densely dotted] (-0.4,-0.64) to (0,0);
\draw[densely dotted] (0.4,-0.64) to (0,0);
\draw[dashed] (0,0) to (-0.8,-0.4);
\draw[dashed] (0,0) to (0.8,-0.4);
\node (A) at (0,0.2) {$A_i$};
\node (Ai-1) at (-0.4,-1.15) {\small $A_{i-1}$};
\node (Ai+1) at (0.4,-1.15) {\small $A_{i+1}$};
\end{tikzpicture}
\label{fig:class6}\caption{Illustration for polygons described in Class \ref{ex:class6}.  Such polygons satisfy $\angle A_{i-1}A_iA_{i+1}> \pi$.}
\end{figure}
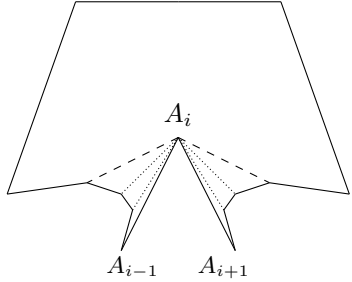
\end{class}

\begin{theorem}\label{th:main1}
Linearly order the vertices of a polygon $P$ in counter-clockwise direction, $A_1,A_2,\ldots,A_n$, where $n:=|P|\ge 5$. Given   $i\in\{1,2,\ldots,n\}$, we have the following statements. Here   all indices are specified mod $n$.
\begin{enumerate}[(A)]
\item $\chi(M_d\setminus\{A_iA_j:j\ne i-1,i,i+1\})\ne 0$ \; $\Leftrightarrow$ \; $P$ belongs to Class \ref{ex:class1} or Class \ref{ex:class2} or Class \ref{ex:class6} (see Figs.~\ref{fig:class1}, \ref{fig:class2} and \ref{fig:class6}).

\item $\chi(M_e\setminus\{A_iA_j:j\ne i-1,i,i+1\})\ne 0$ \; $\Leftrightarrow$ \;  $P$ is convex or belongs to Class \ref{ex:class3}  (see Fig.~\ref{fig:class3}).

\item $\chi(M_d\setminus\{ A_{i-1}A_{i+1}\})\ne 0$ \; $\Leftrightarrow$ \;  $P$ belongs to Class \ref{ex:class4} (see Fig.~\ref{fig:class4}).

\item $\chi(M_e\setminus\{ A_{i-1}A_{i+1}\})\ne 0$ \; $\Leftrightarrow$ \; $P$ is convex or belongs to
Class \ref{ex:class2} or Class \ref{ex:class5}  (see Figs.~\ref{fig:class2} and \ref{fig:class5}).
\end{enumerate}
\end{theorem}

The paper is organized as follows. In Section \ref{sec:intro}, we give the background, introduction, preliminary, and show the main theorems of this paper. A brief discussion of reduced Euler characteristic and a proof of Theorem \ref{th:main} (i.e., Shephard's problem) is in Section \ref{sec:Euler-characteristic-nonconvex}. Auxiliary results on reduced Euler characteristic  and the proof of Theorem \ref{th:2} and Proposition \ref{pro:every} are proposed in Section \ref{sec:main-proof}. Detailed proofs of Theorem \ref{th:main1} and Theorem \ref{d_k} are presented respectively in Section \ref{sec:thm4} and Section \ref{sec:furtherresults} with further results. Additional illustrations with a few remarks are provided in the appendix.

\section{Reduced Euler characteristic for family of segments and the proof of Theorem \ref{th:main}}
\label{sec:Euler-characteristic-nonconvex}

First we list some basic and elementary facts which will be  used in the sequel. The proofs are very basic and we put them in the Appendix for reader's convenience.

\begin{pro}\label{pro:d1>=1}
If $n\ge4$, then $P_n$ has diagonals, i.e., $d_1\ge 1$.
\end{pro}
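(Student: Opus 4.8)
The plan is to exhibit an explicit diagonal by a standard convex-vertex argument. First I would establish the existence of a \emph{convex vertex}, i.e., a vertex at which the interior angle is strictly less than $\pi$. Such a vertex always exists: choosing the vertex $v$ with the smallest $y$-coordinate (breaking ties by smallest $x$-coordinate), the whole polygon lies in the closed upper half-plane bounded by the horizontal line through $v$, so the interior angle at $v$ cannot exceed $\pi$; by general position it is strictly less than $\pi$. Let $u$ and $w$ denote the two neighbors of $v$ along the boundary of $P_n$.

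Next I would consider the closed triangle $T=\triangle uvw$ and split into two cases. If no vertex of $P_n$ other than $u,v,w$ lies in $T$, then the segment $uw$ remains in the interior of $P_n$ and is crossed by no edge, so $uw$ is a diagonal; since $u$ and $w$ are both neighbors of $v$ and $n\ge 4$, there is at least one further vertex on the complementary boundary arc, so $u$ and $w$ are non-consecutive and $uw$ is genuinely a chord. Otherwise, among the vertices lying in the interior of $T$, I would select the vertex $x$ maximizing the distance to the line $uw$ (equivalently, the one lying deepest toward $v$). Then the segment $vx$ lies in the interior of $P_n$ and meets no edge, so $vx$ is a diagonal; as $x$ is interior to $T$ it differs from $u$ and $w$, hence is not a neighbor of $v$, and $vx$ is a chord.

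The main obstacle is the careful verification that the claimed segment ($uw$ or $vx$) lies entirely in the interior of $P_n$ and crosses no boundary edge. For this I would use the Jordan curve structure together with the convexity of $v$: a small one-sided neighborhood of $v$ inside the cone $\angle uvw$ belongs to the interior of $P_n$, which anchors the segment on the correct side of the boundary. An edge of $P_n$ that crossed $uw$ (respectively $vx$) would have to place one of its endpoints in the open subregion of $T$ cut off between the segment and $v$; in Case (A) that subregion is empty by hypothesis, and in Case (B) the line through $x$ parallel to $uw$ confines that subregion to a vertex-free slab by the extremal choice of $x$. Either way a contradiction results, so no such crossing exists. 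This separation step is the only delicate part; the existence of a convex vertex and the non-consecutiveness of the endpoints are immediate, so $d_1\ge 1$ follows.
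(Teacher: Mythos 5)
Your proposal is correct and follows essentially the same route as the paper's proof: both locate a vertex of interior angle less than $\pi$, form the triangle with its two neighbors, and either take the segment joining the neighbors as the diagonal or join the apex to the vertex inside the triangle farthest from the base line. The only cosmetic difference is that you find the convex vertex via the lowest-point argument while the paper uses the angle sum $\sum \angle A_{i-1}A_iA_{i+1}=(n-2)\pi$; your write-up also fills in the separation details that the paper dismisses with ``it is easy to check.''
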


\begin{pro}\label{pro:triangulation}
For a set of non-crossing diagonals, $J\in NC[M_d]$, there exists $J'\supset J$ which divides $P_n$ into triangles. Particularly, for any $n\ge 3$, $d_{n-3}\ge 1$.
\end{pro}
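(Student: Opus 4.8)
The plan is to use Proposition \ref{pro:d1>=1} as a building block and to enlarge $J$ one diagonal at a time until nothing can be added. First I would record the structural observation that the diagonals in $J$ subdivide the closed region bounded by $P_n$ into finitely many faces, each of which is itself a simple polygon whose edges are either edges of $P_n$ or diagonals belonging to $J$, and whose vertex set is a subset of the vertices of $P_n$. Since no three vertices of $P_n$ are collinear, the vertices of each face are again in general position, so Proposition \ref{pro:d1>=1} applies to every face that has at least four vertices.

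Next I would run the following greedy procedure. If every face is a triangle, then $J$ already triangulates $P_n$ and I take $J'=J$. Otherwise some face $Q$ has at least four vertices, and by Proposition \ref{pro:d1>=1} it carries a diagonal $d$. The key step, which I expect to be the main obstacle, is to verify that $d$ is \emph{admissible} as a diagonal of the whole polygon: that $d\in M_d$, that $d\notin J$, and that $d$ crosses no member of $J$. This is purely geometric. The segment $d$ lies in the interior of $Q$, hence in the interior of $P_n$, so it is not an edge of $P_n$; and its two endpoints, being joined by an interior segment, cannot be consecutive vertices of $P_n$ (consecutive vertices are joined only by a boundary edge). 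Thus $d$ is a genuine diagonal of $P_n$. Moreover $d$ is confined to the single face $Q$, whereas every diagonal of $J$ lies on a boundary shared by two faces; consequently $d$ coincides with no diagonal of $J$ and meets none of them in its interior. Hence $J\cup\{d\}\in NC[M_d]$ and is strictly larger than $J$.

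Because $M_d$ is finite, this enlargement can be performed only finitely many times, and it halts precisely when no face admits a diagonal, i.e.\ when every face is a triangle (a face with at least four vertices would still carry a diagonal by Proposition \ref{pro:d1>=1}). The resulting set $J'\supseteq J$ is then a triangulation of $P_n$, which proves the first assertion. For the final claim I would apply this with $J=\varnothing\in NC[M_d]$: the triangulation $J'$ so produced is a set of pairwise non-crossing diagonals, and a standard count shows $\#J'=n-3$. Indeed, a triangulation of an $n$-gon consists of $n-2$ triangles, each contributing three edges; the $n$ boundary edges each lie in one triangle and each interior edge in two, so the number of interior edges is $(3(n-2)-n)/2=n-3$. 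Hence $J'$ is counted by $d_{n-3}$, giving $d_{n-3}\ge 1$; the case $n=3$ is the degenerate triangulation $J'=\varnothing$ with $d_0=1$. (A cleaner alternative for the first assertion is an induction on $n$: cut along any $d\in J$, or along a diagonal furnished by Proposition \ref{pro:d1>=1} when $J=\varnothing$, split $J$ into the two sub-polygons — which is legitimate since non-crossing diagonals lie entirely on one side of $d$ — and triangulate each piece by the inductive hypothesis.)
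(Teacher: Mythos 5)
Your proof is correct and takes essentially the same approach as the paper's: both repeatedly invoke Proposition \ref{pro:d1>=1} to insert a new diagonal into any face of the subdivision having at least four vertices, iterating until every face is a triangle. The only difference is that you spell out two points the paper dismisses with ``clearly''/``obviously'' --- namely that a diagonal of a face is a diagonal of $P_n$ non-crossing with $J$, and the edge count $\#J'=n-3$.
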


\begin{remark}
If $P_{n+3}$ is convex, then $d_n$ is known as the Catalan number. It is well-known that  $d_n=\frac{1}{n+1}{2n+2 \choose n}$.
\end{remark}

\subsection{reduced Euler characteristic of a set of segments in the plane}

Let $M$ be a set of segments in the plane. For $A\subset M$, let
 $$NC[A]=\{ J\subset A: \text{ the segments in }J\text{ are pairwise non-crossing}\}\cup\{\varnothing\}$$
 and let $\nu_i(A)=\# \{ J\in NC[A]:\#J=i\}$, $i=0,1,\cdots$. Here we set $\nu_0(A)=1$. Denote by $\chi(A):=\sum_{i=0}^\infty (-1)^i\nu_i(A)$ the reduced Euler characteristic of $A$.

\begin{remark}
(1) $\chi(\varnothing)=1$, $\chi(\{v\})=0$ for any $v\in A$.

(2) If $\nu_i(A)=0$, then $\nu_{i+1}(A)=0$.

(3) If $\# A=n$ and $i>n$, then $\nu_i(A)=0$. So $\chi(A)=\sum_{i=0}^n(-1)^i \nu_i(A)$ is a finite sum  and thus it is well-defined.
\end{remark}

\begin{pro}\label{pro:chi-induction}
If $v\in A$, then $\chi(A)=\chi(A\setminus \{v\})-\chi(A_v)$, where $A_v$ collects the segments in $A\setminus \{v\}$ which are non-crossing with $v$.
\end{pro}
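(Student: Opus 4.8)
The plan is to prove the identity by a deletion argument on the single segment $v$, sorting the non-crossing families in $NC[A]$ according to whether or not they contain $v$. First I would split $NC[A]$ as the disjoint union of $\mathcal{N}_0=\{J\in NC[A]:v\notin J\}$ and $\mathcal{N}_1=\{J\in NC[A]:v\in J\}$. The first piece is immediately identified: a subset of $A\setminus\{v\}$ is pairwise non-crossing exactly when it is pairwise non-crossing as a subset of $A$, so $\mathcal{N}_0=NC[A\setminus\{v\}]$ with the cardinalities matching level by level.

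The key step is to biject $\mathcal{N}_1$ with $NC[A_v]$. Given $J\in\mathcal{N}_1$, the segments of $J\setminus\{v\}$ are pairwise non-crossing and, since $v\in J$, each of them is non-crossing with $v$; hence $J\setminus\{v\}\subseteq A_v$ and $J\setminus\{v\}\in NC[A_v]$. Conversely, for any $I\in NC[A_v]$ the set $I\cup\{v\}$ lies in $\mathcal{N}_1$, because the members of $I$ are pairwise non-crossing by assumption and are non-crossing with $v$ by the very definition of $A_v$. These two maps are mutually inverse, so $J\mapsto J\setminus\{v\}$ is a bijection $\mathcal{N}_1\to NC[A_v]$ that lowers cardinality by exactly one.

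Translating the bijection into the counting functions, I obtain the recurrence $f_i(A)=f_i(A\setminus\{v\})+f_{i-1}(A_v)$ for all $i\ge 0$, with the convention $f_{-1}(A_v)=0$; this is consistent at $i=0$ since $f_0(A)=f_0(A\setminus\{v\})=1$. Finally I would form the alternating sum, reindexing the second term by $j=i-1$:
\begin{equation*}
\chi(A)=\sum_{i\ge 0}(-1)^if_i(A)=\sum_{i\ge 0}(-1)^if_i(A\setminus\{v\})+\sum_{i\ge 1}(-1)^if_{i-1}(A_v)=\chi(A\setminus\{v\})-\chi(A_v),
\end{equation*}
where the last equality uses $\sum_{i\ge 1}(-1)^if_{i-1}(A_v)=-\sum_{j\ge 0}(-1)^jf_j(A_v)=-\chi(A_v)$. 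All sums are finite because $f_i$ vanishes once $i$ exceeds the number of segments, so no convergence issue arises. There is no substantial obstacle here; the only point requiring care is the correctness of the bijection, namely that re-attaching $v$ to an element of $NC[A_v]$ never creates a crossing, which is guaranteed precisely by defining $A_v$ to be the set of segments non-crossing with $v$.
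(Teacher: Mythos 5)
Your proof is correct and follows essentially the same route as the paper: both split the non-crossing families in $NC[A]$ according to whether they contain $v$, use the bijection $J\mapsto J\setminus\{v\}$ onto $NC[A_v]$ to get the recurrence $f_i(A)=f_i(A\setminus\{v\})+f_{i-1}(A_v)$, and conclude by taking the alternating sum. Your write-up merely makes the bijection and the $i=0$ convention more explicit than the paper does.
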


Proposition \ref{pro:chi-induction} is a general fact about the reduced Euler characteristic of flag simplicial complexes, connected the reduced Euler characteristic of the complex with that of the deletion and the link of a vertex. For reader's convenience, we give a proof in the Appendix.

\begin{defn}
Let $H\in NC[A]$. We call $H$ a {\sl center} of $A$, if for any $J\in NC[A]$, there exists $J'\in NC[A]$ such that $J'\supset J$ and $J'\cap H\ne\varnothing$. If $A$ has a center, then we call it a {\sl star} set.
\end{defn}

\begin{pro}\label{pro:starset=0}
If $A$ is a star set, then $\chi(A)=0$.
\end{pro}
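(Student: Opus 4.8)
The plan is to prove $\chi(A)=0$ by exhibiting a fixed-point-free, sign-reversing involution on the family $NC[A]$. Writing $\chi(A)=\sum_{i\ge 0}(-1)^i f_i(A)=\sum_{J\in NC[A]}(-1)^{\#J}$, it suffices to pair each non-crossing set $J$ with another non-crossing set whose cardinality differs from $\#J$ by exactly one; the two signs then cancel and the whole sum vanishes. The pairing will be the standard \emph{element matching} that toggles membership of a single segment of the heart, and the two defining features of a heart $H$ --- that $H$ is itself a non-crossing set, and that every non-crossing set extends to one meeting $H$ --- are exactly what make this matching an involution with no fixed points.

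Concretely, I would fix a heart $H$ and linearly order its segments as $h_1,\dots,h_m$ (note $H\ne\varnothing$, since applying the heart condition to $J=\varnothing$ would otherwise force a $J'$ with $J'\cap H\ne\varnothing$, which is impossible). For $J\in NC[A]$ I call an index $i$ \emph{togglable at} $J$ if either $h_i\in J$, or $h_i\notin J$ and $J\cup\{h_i\}\in NC[A]$ (that is, $h_i$ crosses no segment of $J$). Let $i(J)$ be the least togglable index and define $\iota(J)=J\,\triangle\,\{h_{i(J)}\}$, i.e.\ remove $h_{i(J)}$ if it lies in $J$ and adjoin it otherwise. By construction $\iota$ changes $\#J$ by $\pm1$, so $\iota$ is sign-reversing; the content lies in checking that $\iota$ is a well-defined involution without fixed points.

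Two verifications then remain. First, $\iota$ is an involution: for $j<i(J)$ the index $j$ is not togglable at $J$, meaning $h_j\notin J$ and $h_j$ crosses some segment of $J$, and I claim $j$ stays non-togglable at $\iota(J)$, so that $i(\iota(J))=i(J)$ and $\iota(\iota(J))=J$. When $\iota$ adjoins $h_{i(J)}$ this is immediate, since $\iota(J)\supset J$ still contains a segment crossed by $h_j$. When $\iota$ removes $h_{i(J)}$, the crossing witness for $h_j$ could a priori have been the removed segment $h_{i(J)}$ itself; this is exactly where I use that $H$ is non-crossing: $h_j,h_{i(J)}\in H$ do not cross, so the segment of $J$ crossed by $h_j$ survives in $\iota(J)=J\setminus\{h_{i(J)}\}$. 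Second, $\iota$ has no fixed point: a fixed point would be a $J$ with no togglable index, i.e.\ $J\cap H=\varnothing$ and every $h\in H$ crossing some segment of $J$; but by the heart property $J$ extends to some $J'\in NC[A]$ with $J'\cap H\ne\varnothing$, and any $h\in J'\cap H$ is then non-crossing with all of $J\subseteq J'$, a contradiction. With both facts in hand, $\iota$ pairs $NC[A]$ into sign-cancelling couples and $\chi(A)=0$. The one delicate point, and the step I would write out most carefully, is the removal case of the involution check, where the non-crossingness of the heart is indispensable; fixed-point-freeness, by contrast, is a direct application of the heart property.
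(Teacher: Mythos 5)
Your proof is correct, and it takes a genuinely different route from the paper's. The paper argues by induction on $\#A$: it first notes $H\ne\varnothing$, disposes of the case $A=H$ via $\sum_{i\ge 0}(-1)^i\binom{\#H}{i}=0$, and otherwise picks $v\in A\setminus H$, applies the deletion recursion $\chi(A)=\chi(A\setminus\{v\})-\chi(A_v)$ of Proposition \ref{pro:chi-induction}, and verifies that $A\setminus\{v\}$ and $A_v$ are again star sets (with hearts $H$ and $H\cap A_v$ respectively), so both terms vanish by the inductive hypothesis. You instead build a direct, induction-free, sign-reversing involution on $NC[A]$ (toggle the least-indexed heart segment that is present or insertable), i.e.\ a sign-cancelling perfect matching of the family $NC[A]$. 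Your two verifications are exactly the delicate points, and you identify correctly where each half of the definition of a heart enters: the hypothesis $H\in NC[A]$ is what saves the removal case (the crossing witness for a smaller-indexed $h_j$ cannot be the removed segment $h_{i(J)}$, since heart segments do not cross each other), while the extension property is what rules out unmatched sets (any $h\in J'\cap H$ with $J\subseteq J'\in NC[A]$ is non-crossing with all of $J$, producing a togglable index); your argument also correctly covers $\varnothing$, whose contribution $f_0=1$ is cancelled against $\{h_1\}$. What your approach buys is a self-contained bijective proof that needs no recursion and makes the role of each hypothesis transparent; what the paper's approach buys is brevity, since the recursion of Proposition \ref{pro:chi-induction} is already established and reused, and the heredity checks for $A\setminus\{v\}$ and $A_v$ are short.
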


\begin{proof}
We do induction on $\#A$. If $\#A=1$, then $\chi(A)=\nu_0(A)-\nu_1(A)=1-1=0$. Suppose that for any star set $A$ with $\# A<n$, $\chi(A)=0$, then for any star set $A$ with $\#A=n$, we shall prove that $\chi(A)$ still equals to $0$.

Let $H$ be a center of $A$. Thus, $H\ne\varnothing$. If $A=H$, then $\chi(A)=\sum_{i\ge 0}(-1)^i{\# A\choose i}=(-1+1)^{\# H}=0$. Otherwise, let $v\in A\setminus H$. Then   Proposition \ref{pro:chi-induction} implies that $\chi(A)=\chi(A\setminus \{v\})-\chi(A_v)$. Obviously, $\# A_v\le \#(A\setminus \{v\})=\# A-1=n-1$.

For any $J\in NC[A\setminus\{v\}]$, we have $J\in NC[A]$ and thus there exists $J'\in NC[A]$ with $J'\supset J$ such that $J'\cap H\ne\varnothing$. Hence, $J'\setminus\{v\}\supset J$ and $(J'\setminus\{v\})\cap H=J'\cap (H\setminus \{v\})=J'\cap H\ne\varnothing$. Therefore, $H$ is a center  of $A\setminus \{v\}$, which means that $A\setminus \{v\}$ is a star set.

For any $J\in NC[A_v]$, we have $J\cup \{v\}\in NC[A]$ and thus there exists $u\in H$ such that $J\cup \{v\}\cup\{u\}\in NC[A]$ and thus $u\in A_v$. Let $J'=J\cup \{v\}\cup\{u\}$. Then  $J'\cap A_v\in NC[A_v]$, $J'\cap A_v\supset J$, and $\varnothing \ne (J'\cap A_v)\cap H\cap A_v\ni u$. Therefore, $H\cap A_v$ is a center of $A_v$, and hence $A_v$ is a star set.

By the hypothesis of induction, we have $\chi(A\setminus \{v\})=0$ and $\chi(A_v)=0$. Therefore, $\chi(A)=0$.
\end{proof}

\subsection{A solution of Shephard's problem (i.e., non-convex case of Theorem \ref{th:main}) }

\begin{proof}[Proof of Theorem \ref{th:main} for non-convex case]
Since $P$ is non-convex, it has more than three vertices. We assume $|P|\ge4$ and $\angle A_1 > \pi$.
Let $H$ be a set of diagonals with an end-point $A_1$ (see Fig.~\ref{fig:Shephard-proof}). Then $H\in NC[M_d]$.

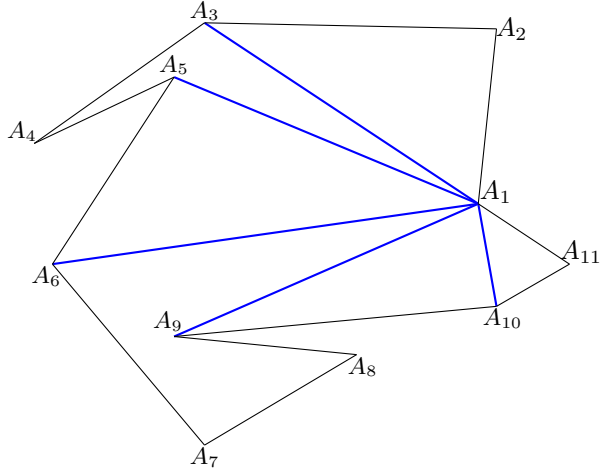
\begin{figure}[H]
\centering
\begin{tikzpicture}[scale=0.8]
\draw (0,0)[thick,blue] to (0.3,-1.7);
\draw (0,0)[thick,blue] to (-5,-2.2);
\draw (0,0)[thick,blue] to (-7,-1);
\draw (0,0)[thick,blue] to (-5,2.1);
\draw (0,0)[thick,blue] to (-4.5,3);
\draw (0,0) to (0.3,2.9);
\draw (0,0) to (1.5,-1);
\draw (-7.3,1) to (-4.5,3);
\draw (-4.5,-4) to (-7,-1);
\draw (-2,-2.5) to (-5,-2.2);
\draw (-7.3,1) to (-5,2.1);
\draw (-2,-2.5) to (-4.5,-4);
\draw (1.5,-1) to (0.3,-1.7);
\draw (-4.5,3) to (0.3,2.9);
\draw (-5,2.1) to (-7,-1);
\draw (0.3,-1.7) to (-5,-2.2);
\node (A) at (0.29,0.2) {$A_1$};
\node (T) at (1.69,-0.8) {$A_{11}$};
\node (S) at (0.55,2.9) {$A_2$};
\node (P8) at (0.4,-1.9) {\small $A_{10}$};
\node (P7) at (-5.1,-1.95) {\small$A_9$};
\node (P6) at (-1.9,-2.7) {\small$A_8$};
\node (P5) at (-4.5,-4.2) {\small$A_7$};
\node (P4) at (-7.1,-1.2) {\small$A_6$};
\node (P3) at (-5,2.3) {\small$A_5$};
\node (P2) at (-7.5,1.2) {\small$A_4$};
\node (P1) at (-4.5,3.2) {\small$A_3$};
\end{tikzpicture}
\caption{\label{fig:Shephard-proof} Illustration for the proof of Theorem \ref{th:main} (B). In this polygon, we can take $H=\{A_1A_3,A_1A_5,A_1A_6,A_1A_9,A_1A_{10}\}$.}
\end{figure}

For any $J\in NC[M_d]$, by Proposition \ref{pro:triangulation}, there exists $J'\in NC[M_d]$ such that $J'\supset J$ and $J'$ divides $P$ into triangles. Since the angle $A_1$ can not be an angle of a triangle, there is someone (a diagonal) in $J'$ such that the vertex  $A_1$ is its end-point. Therefore $H\cap J'\ne \varnothing$.
So $M_d$ is a star set, and then by Proposition \ref{pro:starset=0}, we get $\chi(M_d)=0$.

For the case of $M_e$, note that there exists an epigonal as a side of the convex hull of $P$. Such epigonal must be non-crossing with other epigonals. This means that such epigonal is a center of $M_e$. Consequently, $M_e$ is a star set, and by Proposition \ref{pro:starset=0}, we get $\chi(M_e)=0$. Combining with the convex case of Theorem \ref{th:main}, we complete the proof.
\end{proof}

Finally, we show a generalization of the non-convex case in Theorem \ref{th:main}. 

\begin{pro}\label{pro:finite-points}
Consider the set $F$ of finite points in the plane, and the set $S_2(F)$ of all the line-segments whose end-points lie in $F$. Let $S\subset S_2(F)$. If $S$ contains an edge of the convex polygon $Pconv(F)$, then $\chi(S)=0$. Here, $Pconv(F)$ is the boundary polygon of the convex hull $conv(F)$.
\end{pro}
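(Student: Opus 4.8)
The plan is to reduce this to the star-set criterion of Proposition~\ref{pro:starset=0}: I will exhibit a heart of $S$ and then conclude $\chi(S)=0$ immediately. The natural candidate for the heart is the singleton $H=\{e\}$, where $e$ is the hull edge guaranteed to lie in $S$ by hypothesis. This mirrors the argument already used for $M_e$ in the proof of Theorem~\ref{th:main}~(B), where a side of the convex hull was shown to be a heart of the epigonal set; the present statement is simply the abstraction of that observation to an arbitrary family $S\subset S_2(F)$.

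First I would isolate the one geometric fact that does all the work: the hull edge $e=PQ$ crosses no other segment of $S_2(F)$. Let $\ell$ be the line through $P$ and $Q$. Since $e$ is an edge of $Pconv(F)$, every point of $F$ lies in one of the closed half-planes bounded by $\ell$, and (by general position) only $P$ and $Q$ lie on $\ell$ itself. Now take any $s=AB\in S_2(F)$ with $s\ne e$. If $\{A,B\}\cap\{P,Q\}=\varnothing$, then $A$ and $B$ are strictly on one side of $\ell$, so the whole segment $s$ lies strictly on that side and cannot meet $e\subset\ell$ at all. If $s$ shares an endpoint with $e$ (say $A=P$), then $s$ meets $\ell$ only at that shared vertex, which is an endpoint of $e$ and not an interior point, so $s$ does not cross $e$. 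In either case $e$ and $s$ are non-crossing.

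With this in hand the heart property is immediate. Let $J\in NC[S]$ be arbitrary. By the previous step $e$ is non-crossing with every segment of $S$, in particular with every segment of $J$, so $J'=J\cup\{e\}\in NC[S]$; moreover $e\in S$ by hypothesis, and $J'\supset J$ with $J'\cap H=\{e\}\ne\varnothing$. Thus $H=\{e\}$ is a heart of $S$, so $S$ is a star set, and Proposition~\ref{pro:starset=0} yields $\chi(S)=0$.

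I do not expect any genuine obstacle here: once the non-crossing property of a hull edge is recorded, the conclusion is a one-line application of the star-set lemma. The only point requiring a moment's care is the degenerate case in which $s$ shares an endpoint with $e$, where one must confirm that meeting $e$ at a vertex does not count as a crossing under the paper's convention; the general-position assumption guarantees that there are no further collinear coincidences to complicate the half-plane argument.
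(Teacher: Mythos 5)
Your proof is correct and takes essentially the same route as the paper: the paper treats this proposition as the direct abstraction of the heart/star-set argument it gives for $M_e$ in the proof of Theorem~\ref{th:main}~(B), which is exactly what you do. The half-plane argument showing the hull edge crosses no other segment of $S_2(F)$, making the singleton $\{e\}$ a heart of $S$ so that Proposition~\ref{pro:starset=0} yields $\chi(S)=0$, is precisely the intended proof.
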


Let $F$ be the collections of vertices of a non-convex polygon $P$, and let $S=M_e(P)$. Then Proposition \ref{pro:finite-points} immediately implies $\chi(M_e)=0$.

\section{Auxiliary results and the proof of Theorem \ref{th:2} and Proposition \ref{pro:every}}
\label{sec:main-proof}

\begin{lemma}\label{lem:1}
Let $J\subset M_d$ be a set of pairwise non-crossing diagonals. For $I\subset J$, $I$ divides $P$ into $1+\# I$ sub-polygons, denoted by $P_{I,k}$, $k=1,2,\ldots,\# I+1$. Then $\chi(M_d\setminus J)=\sum_{I\subset J} \prod_{k=1}^{\#I+1} \chi(M_d(P_{I,k}))$, where $M_d(P_{I,k})$ is the set of diagonals of $P_{I,k}$.
\end{lemma}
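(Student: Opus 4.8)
The plan is to prove the identity by induction on $\#J$, with the engine being the deletion formula of Proposition~\ref{pro:chi-induction} together with a multiplicativity property for families of segments that never cross one another. First I would record the following elementary fact, which I will use repeatedly: if a set of segments $A$ splits as a disjoint union $A=A_1\sqcup A_2$ in which no segment of $A_1$ crosses any segment of $A_2$, then $NC[A]=\{K_1\cup K_2:K_1\in NC[A_1],\,K_2\in NC[A_2]\}$, so that $f_i(A)=\sum_{i_1+i_2=i}f_{i_1}(A_1)f_{i_2}(A_2)$ and hence $\chi(A)=\chi(A_1)\chi(A_2)$. This is the only structural input beyond Proposition~\ref{pro:chi-induction}.

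For the base case $J=\varnothing$ the right-hand side collapses to the single term $I=\varnothing$, whose only sub-polygon is $P$ itself, giving $\chi(M_d)=\chi(M_d\setminus\varnothing)$, as required. For the inductive step I fix $v\in J$ and apply Proposition~\ref{pro:chi-induction} to the set $A=(M_d\setminus J)\cup\{v\}=M_d\setminus(J\setminus\{v\})$ at the element $v$ (note $v\in A$ since $v\in M_d$ and $v\notin J\setminus\{v\}$). Because $A\setminus\{v\}=M_d\setminus J$, this yields $\chi(M_d\setminus J)=\chi(M_d\setminus(J\setminus\{v\}))+\chi(A_v)$, where $A_v$ is the set of diagonals in $M_d\setminus J$ that are non-crossing with $v$.

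Next I would analyze $A_v$ geometrically and then reassemble. The diagonal $v$ cuts $P$ into two sub-polygons $Q_1,Q_2$, and any diagonal distinct from $v$ and non-crossing with $v$ lies wholly inside one of them, so $A_v$ is the disjoint union of $M_d(Q_1)\setminus J$ and $M_d(Q_2)\setminus J$ with no diagonal of $Q_1$ crossing one of $Q_2$; the multiplicativity fact then gives $\chi(A_v)=\chi(M_d(Q_1)\setminus J)\,\chi(M_d(Q_2)\setminus J)$. Writing $J\setminus\{v\}=J_1\sqcup J_2$ according to which $Q_i$ each remaining diagonal lies in (legitimate since $J$ is non-crossing), each factor equals $\chi(M_d(Q_i)\setminus J_i)$ with $\#J_i<\#J$, so the induction hypothesis applies on $Q_1,Q_2$, and likewise to the summand $\chi(M_d\setminus(J\setminus\{v\}))$. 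I then split the target sum $\sum_{I\subset J}$ by whether $v\in I$: the subsets $I\subset J\setminus\{v\}$ reproduce exactly the expansion of $\chi(M_d\setminus(J\setminus\{v\}))$, while for $I=I'\cup\{v\}$ cutting $P$ along $I$ coincides with cutting $Q_1$ along $I'\cap J_1$ and $Q_2$ along $I'\cap J_2$, so the product of sub-polygon characteristics factors over $Q_1$ and $Q_2$; summing over $I'$ and factoring the sum into a product of two sums recovers $\chi(M_d(Q_1)\setminus J_1)\,\chi(M_d(Q_2)\setminus J_2)=\chi(A_v)$. Adding the two parts closes the induction.

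I expect the main obstacle to be the bookkeeping in this final reassembly: one must verify that cutting $P$ along $I'\cup\{v\}$ genuinely refines into the independent cuts of $Q_1$ and $Q_2$, so that the product over all sub-polygons of $P$ separates into the product over the pieces of $Q_1$ times that of $Q_2$, and that the sum over $I'\subset J\setminus\{v\}$ correspondingly factors as a product of two sums over $I'\cap J_1$ and $I'\cap J_2$. The supporting geometric claim—that every diagonal non-crossing with $v$ falls entirely within a single $Q_i$, which is what makes the disjoint-union multiplicativity applicable to $A_v$—is intuitively clear but deserves a careful statement, since the whole factorization rests on it.
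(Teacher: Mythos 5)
Your proof is correct, but it takes a genuinely different route from the paper's. The paper proves the lemma by a single inclusion--exclusion computation at the level of the counting functions: classifying each $S\in NC[M_d]$ by its intersection with $J$, it derives $f_j(M_d\setminus J)=\sum_{I\subset J,\,\#I\le j}(-1)^{\#I}f_{j-\#I}(\widehat{M_d\setminus I})$, where $\widehat{M_d\setminus I}$ is the set of diagonals non-crossing with those in $I$; taking alternating sums and exchanging the order of summation gives $\chi(M_d\setminus J)=\sum_{I\subset J}\chi(\widehat{M_d\setminus I})$, and the product formula $\chi(\widehat{M_d\setminus I})=\prod_{k}\chi(M_d(P_{I,k}))$ finishes. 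You instead induct on $\#J$, applying Proposition~\ref{pro:chi-induction} to $M_d\setminus(J\setminus\{v\})$ at a chosen $v\in J$ to get $\chi(M_d\setminus J)=\chi(M_d\setminus(J\setminus\{v\}))+\chi(A_v)$, and then factoring $A_v$ across the two sub-polygons $Q_1,Q_2$ cut off by $v$. Both arguments ultimately rest on the same two ingredients: the multiplicativity $\chi(A_1\sqcup A_2)=\chi(A_1)\chi(A_2)$ for mutually non-crossing families, and the geometric fact that the diagonals of $P$ non-crossing with a given non-crossing set are exactly the disjoint union of the diagonals of the resulting sub-polygons. The paper invokes these wholesale as ``the product formula of Euler characteristic,'' whereas you state them explicitly via the convolution identity $f_i(A)=\sum_{i_1+i_2=i}f_{i_1}(A_1)f_{i_2}(A_2)$, which is a point in your favor. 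What the paper's computation buys is brevity: all subsets $I\subset J$ are handled in one closed-form manipulation. What your induction buys is that the only combinatorial step needed is the bijection $I'\leftrightarrow(I'\cap J_1,\,I'\cap J_2)$ for $I'\subset J\setminus\{v\}=J_1\sqcup J_2$, with the inclusion--exclusion hidden inside the already-proved deletion formula; the cost is the reassembly bookkeeping you flag, which does go through, since cutting $P$ along $I'\cup\{v\}$ genuinely refines into independent cuts of $Q_1$ along $I'\cap J_1$ and $Q_2$ along $I'\cap J_2$, so the sum over $I'$ factors into the product of the two sums supplied by the induction hypothesis on $Q_1$ and $Q_2$.
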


\begin{proof}
We classify the sets in $NC[M_d]$ via their intersections  with $J$. It follows from the principle of inclusion-exclusion that
\begin{align*}
\nu_j(M_d\setminus J)=& \sum_{S\in NC[M_d],\# S=j,S\cap J=\varnothing} 1
\\=& \sum_{S\in NC[M_d],\# S=j} 1+ \sum_{I\subset J,1\le \# I\le j}(-1)^{\# I} \sum_{S\in NC[M_d],\# S=j,S\cap J\supset I} 1
\\=& \nu_j(M_d)+ \sum_{I\subset J,1\le \# I\le j}(-1)^{\# I} \nu_{j-\# I}(\widehat{M_d\setminus I})
\\=& \sum_{I\subset J,\# I\le j}(-1)^{\# I}\nu_{j-\# I}(\widehat{M_d\setminus I}),
\end{align*}
where $\widehat{M_d\setminus I}$ denotes the set of diagonals which are non-crossing with the diagonals in $I$. Then, according to the definition of reduced Euler characteristic and the above equality, we have
\begin{align*}
\chi(M_d\setminus J)&=\sum_{j=0}^\infty (-1)^j \nu_j(M_d\setminus J)
\\&= \sum_{j=0}^\infty (-1)^j \sum_{I\subset J,\# I\le j}(-1)^{\# I}\nu_{j-\# I}(\widehat{M_d\setminus I})
\\&=\sum_{I\subset J}  \sum_{j=\# I}^\infty (-1)^{j-\# I} \nu_{j-\# I}(\widehat{M_d\setminus I})
\\&=\sum_{I\subset J}  \chi(\widehat{M_d\setminus I})
\\&=\sum_{I\subset J} \prod_{k=1}^{\#I+1} \chi(M_d(P_{I,k})).
\end{align*}
The last equality is a direct consequence of the product formula of reduced Euler characteristic.
\end{proof}


A direct calculation following Lemma \ref{lem:1} gives
\begin{align*}
\chi(M_d\setminus J)&=\sum_{I\subset J} \prod_{k=1}^{\#I+1} \chi(M_d(P_{I,k}))
\\&=\sum_{I\subset J,~P_{I,k}\text{ convex},\forall k} \prod_{k=1}^{\#I+1} (-1)^{|P_{I,k}| +1}
\\&=\sum_{I\subset J,~P_{I,k}\text{ convex},\forall k} (-1)^{\sum_{k=1}^{\#I+1}(|P_{I,k}| +1)}
\\&=\sum_{I\in NC_c[J]} (-1)^{|P|+2\#I +\#I+1}
\\&=(-1)^{|P|+1}\sum_{I\in NC_c[J]} (-1)^{\#I}.
\end{align*}

So, we complete the proof of \eqref{eq:first-main}, which is the main part of Theorem \ref{th:2}. Next, we focus on the other parts.

\begin{cor}\label{cor:convex=0}
If $P$ is convex, and $J\in NC[M_d]\setminus \{\varnothing\}$, then $\chi(M_d\setminus J)=0$.
\end{cor}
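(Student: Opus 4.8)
The plan is to read off everything from the main formula of Theorem \ref{th:2},
$$\chi(M_d\setminus J)=(-1)^{|P|+1}\sum_{I\in NC_c[J]} (-1)^{\#I},$$
and then exploit the fact that convexity forces $NC_c[J]$ to be the \emph{entire} power set of $J$, at which point the sum collapses by the binomial theorem.

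First I would record the geometric observation that distinguishes the convex case: cutting a convex region along a chord always produces convex pieces, so any family of pairwise non-crossing diagonals of a convex $P$ subdivides it into convex sub-polygons. Hence, when $P$ is convex, \emph{every} $I\in NC[J]$ provides a convex partition of $P$, which means $NC_c[J]=NC[J]$. This is the only step that uses geometry rather than bookkeeping, and it is elementary; I would state it explicitly since it is exactly the place where convexity enters.

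Next I would note that, because $J$ is itself a set of pairwise non-crossing diagonals, every subset $I\subseteq J$ is again pairwise non-crossing. Therefore $NC[J]$ is precisely the power set $2^{J}$, and combining this with the previous step gives $NC_c[J]=2^{J}$. The alternating sum then becomes a standard binomial collapse:
$$\sum_{I\in NC_c[J]} (-1)^{\#I}=\sum_{i=0}^{\#J}\binom{\#J}{i}(-1)^i=(1-1)^{\#J}=0,$$
where the vanishing uses $\#J\ge 1$, which is guaranteed precisely by the hypothesis $J\ne\varnothing$. Substituting back into the displayed formula yields $\chi(M_d\setminus J)=0$.

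I do not expect a genuine obstacle here: the content is entirely in correctly identifying $NC_c[J]$ with $2^J$ in the convex case, after which the conclusion is the familiar identity $(1-1)^m=0$ for $m\ge 1$. The one subtlety worth flagging is the role of the hypothesis $J\ne\varnothing$: if $J=\varnothing$ then $\#J=0$ and the sum equals $1$ rather than $0$, recovering $\chi(M_d)=(-1)^{|P|+1}$ for a convex polygon, which is consistent with Theorem \ref{th:main} (A) and shows the hypothesis is necessary.
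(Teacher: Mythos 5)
Your proof is correct and is essentially identical to the paper's: the paper also plugs $J$ into the formula of Theorem \ref{th:2}, identifies $NC_c[J]$ with the full power set of $J$ (since every diagonal cut of a convex polygon yields convex pieces), and collapses the alternating sum via $(-1+1)^{\#J}=0$. The only difference is that you spell out the geometric step and the necessity of $J\ne\varnothing$ explicitly, which the paper leaves implicit.
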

\begin{proof}
Note that $\sum_{I\in NC_c[J]}(-1)^{\#I}=\sum_{I\subset J}(-1)^{\#I}=(-1+1)^{\# J}=0$.
\end{proof}

The following Lemma \ref{lem:d2} is another form of \eqref{eq:first-main} in Theorem \ref{th:2}.

\begin{lemma}\label{lem:d2}
Let $J\subset M_d$ be a nonempty subset of pairwise non-crossing diagonals. Then $\chi(M_d\setminus J)=(-1)^{|P|}\sum_{I\in NC_{nc}[J]} (-1)^{\#I}$.
\end{lemma}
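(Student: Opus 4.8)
The plan is to read Lemma \ref{lem:d2} directly off the master identity for Theorem \ref{th:2} just established, namely
\[
\chi(M_d\setminus J)=(-1)^{|P|+1}\sum_{I\in NC_c[J]}(-1)^{\#I},
\]
by trading the summation over convex partitions for one over non-convex partitions. The whole argument is sign bookkeeping plus a single binomial cancellation, so I do not anticipate any genuine obstacle; the only point worth isolating is why the alternating sum over \emph{all} subsets of $J$ collapses.

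First I would record the following consequence of the hypothesis that $J$ is a set of \emph{pairwise} non-crossing diagonals: every subset $I\subset J$ again consists of pairwise non-crossing diagonals, hence $I\in NC[J]$. Thus $NC[J]$ is nothing but the full power set $2^{J}$, and the binomial theorem gives
\[
\sum_{I\in NC[J]}(-1)^{\#I}=\sum_{I\subset J}(-1)^{\#I}=(1-1)^{\#J}=0,
\]
where the final equality uses the standing assumption $J\neq\varnothing$.

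Next I would split this vanishing sum along the disjoint decomposition $NC[J]=NC_c[J]\sqcup NC_{nc}[J]$ coming from the definition $NC_{nc}[J]=NC[J]\setminus NC_c[J]$, obtaining
\[
\sum_{I\in NC_c[J]}(-1)^{\#I}=-\sum_{I\in NC_{nc}[J]}(-1)^{\#I}.
\]
Substituting this into the master identity and absorbing the extra minus sign into $(-1)^{|P|+1}$ yields $\chi(M_d\setminus J)=(-1)^{|P|}\sum_{I\in NC_{nc}[J]}(-1)^{\#I}$, which is exactly the assertion. The only step deserving emphasis is the reduction $NC[J]=2^{J}$ in the previous paragraph, since it is precisely the non-crossing hypothesis on $J$ that forces the total alternating sum to vanish and so lets us exchange the convex sum for the non-convex one at the cost of a sign; once that observation is in place, the lemma is immediate.
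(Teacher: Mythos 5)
Your proof is correct and is essentially the paper's own argument: both rely on the master identity $\chi(M_d\setminus J)=(-1)^{|P|+1}\sum_{I\in NC_c[J]}(-1)^{\#I}$ from Theorem \ref{th:2}, observe that $NC[J]$ is the full power set of $J$ so that $\sum_{I\subset J}(-1)^{\#I}=0$ for $J\neq\varnothing$, and then swap the convex sum for the non-convex one at the cost of a sign. Your explicit remark that the pairwise non-crossing hypothesis is what makes $NC[J]=2^J$ is a point the paper leaves implicit, but it is the same proof.
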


\begin{proof}
Note that $\sum_{I\in NC[J]}(-1)^{\#I}=\sum_{I\subset J}(-1)^{\#I}=(-1+1)^{\# J}=0$. Thus, by Theorem \ref{th:2}, we have
\begin{align*}
\chi(M_d\setminus J)&=(-1)^{|P|+1}\sum_{I\in NC_c[J]} (-1)^{\#I}
\\&=(-1)^{|P|+1}\left(\sum_{I\in NC[J]}(-1)^{\#I}- \sum_{I\in NC_{nc}[J]} (-1)^{\#I}\right)
\\&=(-1)^{|P|}\sum_{I\in NC_{nc}[J]} (-1)^{\#I}.
\end{align*}
\end{proof}

\begin{pro}\label{pro:1}
If $J$ divides $P$ into sub-polygons containing non-convex one, then $\chi(M_d\setminus J)=0$.
\end{pro}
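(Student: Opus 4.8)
The plan is to reduce the statement to a single combinatorial claim and then prove that claim geometrically. Recall that the main part of Theorem \ref{th:2}, established just after Lemma \ref{lem:1}, gives the identity
$$\chi(M_d\setminus J)=(-1)^{|P|+1}\sum_{I\in NC_c[J]} (-1)^{\#I}.$$
So it suffices to show that, under the hypothesis, the index set is empty: $NC_c[J]=\varnothing$. Once that is known, the right-hand side is an empty sum and $\chi(M_d\setminus J)=0$ follows at once. Note this also handles the degenerate case $J=\varnothing$ (a non-convex $P$), since then $\varnothing\in NC_c[\varnothing]$ would force $P$ to be convex.

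To prove $NC_c[J]=\varnothing$ I would argue by contradiction through a refinement (monotonicity) principle: \emph{if some $I\subset J$ provides a convex partition of $P$, then $J$ itself provides a convex partition of $P$}. Granting this, the hypothesis that $J$ divides $P$ into sub-polygons one of which is non-convex says precisely that $J$ is \emph{not} a convex partition; the refinement principle then forbids any $I\subset J$ from being a convex partition, i.e.\ $NC_c[J]=\varnothing$, which is what we want.

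The geometric content lies entirely in establishing the refinement principle, so that is the step I expect to require care. Suppose $I\subset J$ cuts $P$ into convex pieces $P_{I,1},\dots,P_{I,\#I+1}$, and take any diagonal $d\in J\setminus I$. Since $d$ is non-crossing with every element of $I$, its relative interior lies in $\mathrm{int}(P)$ and contains no vertex of $P$, hence is disjoint from $\bigcup_{d'\in I}d'$; being connected, it lies in a single component $\mathrm{int}(P_{I,k})$. The endpoints of $d$ are therefore vertices of $P_{I,k}$, and $d$ is not an edge of $P_{I,k}$ (the edges of $P_{I,k}$ are edges of $P$ or members of $I$, while $d$ is neither), so $d$ is a genuine diagonal of the convex polygon $P_{I,k}$. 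Consequently the diagonals of $J$ falling inside each $P_{I,k}$ form a set of pairwise non-crossing diagonals of a convex polygon, which can only subdivide $P_{I,k}$ into convex pieces. Collecting these subdivisions over all $k$ shows that $J$ yields a convex partition of $P$, proving the refinement principle.

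The main obstacle is thus the bookkeeping in the previous paragraph: verifying that each $d\in J\setminus I$ is a diagonal of exactly one piece $P_{I,k}$ — namely interior containment, that its endpoints are vertices of $P_{I,k}$, and that it is not an edge of $P_{I,k}$. Everything else is formal. With the refinement principle in hand, the contrapositive gives $NC_c[J]=\varnothing$, and the displayed identity yields $\chi(M_d\setminus J)=0$, completing the proof.
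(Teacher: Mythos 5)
Your proposal is correct and follows essentially the same route as the paper: the paper's proof asserts (as ``easy to check'') that $NC_{nc}[J]=NC[J]$ --- which is exactly your claim $NC_c[J]=\varnothing$ --- and then applies Lemma \ref{lem:d2}, the complementary form of the identity you invoke, so the two arguments differ only in which side of the binomial cancellation is used. Your refinement principle supplies the verification the paper omits, and your uniform treatment of $J=\varnothing$ replaces the paper's separate appeal to Theorem \ref{th:main} (B).
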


\begin{proof}
The case of $J=\varnothing$ reduces to Theorem \ref{th:main} (B). We suppose that $J\ne\varnothing$. Since $J\in NC_{nc}[M_d]$, it is easy to check that $NC_{nc}[J]=NC[J]$. Thus, combining with Lemma \ref{lem:d2}, we immediately obtain
\begin{align*}
\chi(M_d\setminus J)&=(-1)^{|P|}\sum_{I\in NC_{nc}[J]} (-1)^{\#I}
\\&=(-1)^{|P|}\sum_{I\subset J} (-1)^{\#I}
\\&=(-1)^{|P|}(-1+1)^{\#J}=0.
\end{align*}
\end{proof}

By Proposition \ref{pro:1}, we deduce Theorem  \ref{th:2} (1).

\begin{pro}\label{pro:2}
Suppose $J$ divides $P$ into convex polygons. Assume that there exists the unique minimal subset $J_c\subset J$ such that $P$ can be divided by $J_c$ into convex sub-polygons. Then $\chi(M_d\setminus J)=0$ if and only if $J_c\ne J$. Besides, if $J$ provides a minimal convex partition by non-crossing diagonals, then $\chi(M_d\setminus J)=(-1)^{|P|+\#J+1}$.
\end{pro}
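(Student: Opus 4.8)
The plan is to feed the structure of $NC_c[J]$ into the formula $\chi(M_d\setminus J)=(-1)^{|P|+1}\sum_{I\in NC_c[J]}(-1)^{\#I}$ from Theorem \ref{th:2}, and then to collapse the alternating sum by a binomial identity. The key structural fact I would first establish is that $NC_c[J]$ is \emph{upward closed} inside the Boolean lattice of subsets of $J$: if $I\in NC_c[J]$ and $I\subset I'\subset J$, then $I'\in NC_c[J]$ as well. Geometrically, every diagonal in $I'\setminus I$ lies in $J$ and hence is non-crossing with all diagonals of $I$, so it is contained in a single convex sub-polygon $P_{I,k}$ cut out by $I$ and merely subdivides that piece into two smaller convex polygons; thus $I'$ still provides a convex partition of $P$.

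Granting upward closure, the hypothesis of a \emph{unique} minimal element $J_c$ forces $NC_c[J]$ to be exactly the interval $\{I:J_c\subset I\subset J\}$. Indeed, by finiteness every $I\in NC_c[J]$ contains some minimal element of $NC_c[J]$, which can only be $J_c$, so $J_c\subset I$; conversely, upward closure shows every $I$ with $J_c\subset I\subset J$ lies in $NC_c[J]$.

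With this description the alternating sum is a routine computation. Writing $c=\#J_c$ and $m=\#J$ and summing over the subsets $K\subset J\setminus J_c$ adjoined to $J_c$,
\begin{align*}
\sum_{I\in NC_c[J]}(-1)^{\#I}=\sum_{K\subset J\setminus J_c}(-1)^{c+\#K}=(-1)^{c}(1-1)^{m-c},
\end{align*}
which vanishes precisely when $m>c$, i.e. when $J_c\ne J$, and equals $(-1)^{c}$ when $J_c=J$. Substituting into Theorem \ref{th:2} gives $\chi(M_d\setminus J)=0$ iff $J_c\ne J$, proving the first assertion; and when $J=J_c$ (so $\#J=c$) it yields $\chi(M_d\setminus J)=(-1)^{|P|+1}(-1)^{\#J}=(-1)^{|P|+\#J+1}$. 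Since a minimal convex partition means $J$ admits no proper subset providing a convex partition, i.e. $J_c=J$, this is exactly the last statement.

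I expect the only genuine obstacle to be the careful verification of upward closure — precisely, the claim that a diagonal of $P$ non-crossing with every diagonal of $I$ lies entirely within one sub-polygon $P_{I,k}$ and is a bona fide diagonal of it, so that adjoining it preserves convexity of all the pieces. Once this is secured, the remainder reduces to the $(1-1)^{m-c}$ identity already exploited in Corollary \ref{cor:convex=0} and Lemma \ref{lem:d2}.
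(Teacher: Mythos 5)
Your proof is correct and follows essentially the same route as the paper's: both identify $NC_c[J]$ with the interval $\{I: J_c\subset I\subset J\}$ and collapse the alternating sum $(-1)^{|P|+1}\sum_{I\in NC_c[J]}(-1)^{\#I}$ from Theorem \ref{th:2} into $(-1)^{|P|+1+\#J_c}(1-1)^{\#(J\setminus J_c)}$. The only difference is presentational: you explicitly verify the upward closure of $NC_c[J]$ (that adjoining further diagonals of $J$ splits a convex piece into two convex pieces), a fact the paper uses implicitly when asserting that $I$ divides $P$ into convex polygons if and only if $J_c\subset I$.
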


\begin{proof}
Since $J_c$ is the unique minimal subset of $J$ which divides $P$ into convex polygons, for $I\subset J$, $I$ divides $P$ into convex polygons if and only if $J_c\subset I$. Combining with Lemma \ref{lem:d2}, we immediately obtain
\begin{align*}
\chi(M_d\setminus J)&=(-1)^{|P|+1}\sum_{I\in NC_c[M_d]} (-1)^{\#I}
\\&=(-1)^{|P|+1}\sum_{J_c\subset I\subset J} (-1)^{\#I}
\\&=(-1)^{|P|+1+\# J_c}\sum_{ I'\subset J\setminus J_c} (-1)^{\#I'}
\\&=(-1)^{|P|+1+\# J_c}\begin{cases}(-1+1)^{\# (J\setminus J_c)},&\text{ if }\;J\setminus J_c\ne\varnothing,\\
1,&\text{ if }\;J\setminus J_c=\varnothing,
\end{cases}
\\&=\begin{cases}0,&\text{ if }\;J_c\ne J,\\
(-1)^{|P|+1+\# J},&\text{ if }\;J_c=J.
\end{cases}
\end{align*}
\end{proof}

By Proposition \ref{pro:2}, we get Theorem  \ref{th:2} (2a).

\begin{pro}\label{pro:J'J}
Let $J\in NC_c[M_d]$ and let $J'\subset J$ satisfy $J'\subset I$, $\forall I\in NC_c[J]$. Then $\chi(M_d(P)\setminus J)=\prod_{k=1}^m\chi(M_d(P_k)\setminus J_k)$, where $m=\#J'+1$, and $P_1,\ldots,P_m$ are the sub-polygons divided by $J'$ and $J_k=(J\setminus J')\cap  M_d(P_k)$, $k=1,\ldots,m$.
\end{pro}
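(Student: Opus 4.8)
The plan is to reduce both sides to the closed form supplied by the main part of Theorem \ref{th:2}, and then match them through a product decomposition of the convex refinements of $J$. First I would apply that formula to $P$ itself,
$$\chi(M_d(P)\setminus J)=(-1)^{|P|+1}\sum_{I\in NC_c[J]}(-1)^{\#I},$$
and to each sub-polygon $P_k$,
$$\chi(M_d(P_k)\setminus J_k)=(-1)^{|P_k|+1}\sum_{I_k\in NC_c[J_k]}(-1)^{\#I_k}.$$
Here one first checks that $J_k\in NC_c[M_d(P_k)]$: the partition of $P$ induced by $J$ refines the partition induced by $J'$, so the diagonals of $J$ lying in $P_k$, namely $J_k$, cut $P_k$ into (convex) faces of the global partition.

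The key combinatorial step is to establish a bijection
$$NC_c[J]\;\longleftrightarrow\;\prod_{k=1}^m NC_c[J_k].$$
Each diagonal of $J\setminus J'$ is non-crossing with $J'$, hence lies entirely inside one of the pieces $P_k$, so that $J\setminus J'=\bigsqcup_{k=1}^m J_k$ is a disjoint union. By hypothesis every $I\in NC_c[J]$ contains $J'$, so I can write $I=J'\sqcup\bigsqcup_{k=1}^m\tilde I_k$ with $\tilde I_k=(I\setminus J')\cap M_d(P_k)\subset J_k$. Since the faces that $I$ cuts out of $P$ are exactly the faces that the $\tilde I_k$ cut out of the $P_k$, the set $I$ provides a convex partition of $P$ if and only if each $\tilde I_k$ provides a convex partition of $P_k$, i.e. $\tilde I_k\in NC_c[J_k]$ for every $k$. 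This gives the asserted bijection $I\mapsto(\tilde I_1,\dots,\tilde I_m)$, along which $\#I=\#J'+\sum_{k=1}^m\#\tilde I_k$. Distributing the sum accordingly yields
$$\sum_{I\in NC_c[J]}(-1)^{\#I}=(-1)^{\#J'}\prod_{k=1}^m\sum_{I_k\in NC_c[J_k]}(-1)^{\#I_k}.$$

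It then remains to reconcile the signs. Counting vertices over the partition gives $\sum_{k=1}^m|P_k|=|P|+2\#J'$, because each diagonal of $J'$ has its two endpoints shared by two adjacent pieces; combined with $m=\#J'+1$ this gives $\sum_{k=1}^m(|P_k|+1)=|P|+3\#J'+1$, hence $(-1)^{\sum_k(|P_k|+1)}=(-1)^{|P|+\#J'+1}$. Substituting into the product of the per-piece formulas shows that both $\chi(M_d(P)\setminus J)$ and $\prod_{k=1}^m\chi(M_d(P_k)\setminus J_k)$ equal $(-1)^{|P|+\#J'+1}\prod_{k=1}^m\sum_{I_k\in NC_c[J_k]}(-1)^{\#I_k}$, which is the desired identity.

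I expect the bijection to be the main obstacle: one must verify carefully that the hypothesis $J'\subset I$ for all $I\in NC_c[J]$ is exactly what allows every convex refinement of $J$ to be reconstructed independently on the sub-polygons, and that global convexity of the partition is equivalent to local convexity on each $P_k$. The sign bookkeeping, though routine, also requires the vertex-count identity $\sum_k|P_k|=|P|+2\#J'$ to be stated correctly.
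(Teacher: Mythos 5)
Your proof is correct and follows essentially the same route as the paper's: both reduce $\chi(M_d(P)\setminus J)$ and each $\chi(M_d(P_k)\setminus J_k)$ to the signed sum $(-1)^{|P|+1}\sum_{I\in NC_c[\cdot]}(-1)^{\#I}$ from Theorem \ref{th:2}, factor $NC_c[J]\cong\prod_{k}NC_c[J_k]$ using the hypothesis $J'\subset I$, and finish with the vertex-count identity $\sum_k|P_k|=|P|+2\#J'$. The only difference is presentational: you spell out the bijection and the convexity equivalence that the paper compresses into a single line of its computation.
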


\begin{proof}
Let $NC_c[J_k,M_d(P_k)]=\{I\subset J_k:I\text{ divides }P_k \text{ into convex polygons}\}$. According to Theorem \ref{th:2}, we obtain
\begin{align*}
\chi(M_d\setminus J)&=(-1)^{|P|+1}\sum_{I\in NC_c[J]} (-1)^{\#I}
\\&=(-1)^{|P|+1}\sum_{I_k \in NC_c[J_k,M_d(P_k)],k=1,\ldots,m} (-1)^{\#J'+\sum_{k=1}^m\#I_k}
\\&=(-1)^{|P|+1+\#J'}\prod_{k=1}^m\sum_{I_k \in NC_c[J_k,M_d(P_k)]}(-1)^{\#I_k}
\\&=(-1)^{|P|+1+\#J'-\sum_{k=1}^m(|P_k|+1)}\prod_{k=1}^m(-1)^{|P_k|+1}\sum_{I_k \in NC_c[J_k,M_d(P_k)]}(-1)^{\#I_k}
\\&=\prod_{k=1}^m\chi(M_d(P_k)\setminus J_k).
\end{align*}
\end{proof}
By Proposition \ref{pro:J'J}, Theorem \ref{th:2} (2c) is proved. 
Using similar techniques, we can prove
\begin{lemma}\label{lem:e}
Assume that $P$ and its convex hull exactly bound $m$ polygons, 
$P^1,\ldots,P^m$. Let $J\subset M_e$ be a subset of pairwise non-crossing epigonals.  Then $\chi(M_e\setminus J)= \prod_{k=1}^{m} \chi(M_d(P^k)\setminus J)$.
\end{lemma}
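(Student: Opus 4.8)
The plan is to show that the entire crossing pattern of the epigonal set $M_e$ decouples along the pockets $P^1,\dots,P^m$, and then to apply the product formula of the Euler characteristic (the one invoked at the end of the proof of Lemma \ref{lem:1}). For each $k$ let $M_e^k$ denote the set of epigonals of $P$ lying in the closed pocket $\overline{P^k}$. I would first prove two structural facts: (i) $M_e=\bigsqcup_{k=1}^m M_e^k$, i.e. every epigonal lies in exactly one pocket; and (ii) no epigonal of $M_e^k$ crosses an epigonal of $M_e^l$ when $k\ne l$. Granting (i) and (ii), any $J\in NC[M_e]$ splits as $J=\bigsqcup_k(J\cap M_e^k)$, so $M_e\setminus J=\bigsqcup_k (M_e^k\setminus J)$ is again a crossing-independent union, and the product formula gives $\chi(M_e\setminus J)=\prod_{k=1}^m\chi(M_e^k\setminus J)$.

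To prove (i), note that an epigonal is a chord of $P$, so both endpoints lie in $F$ and, by convexity, the whole segment lies in $\mathrm{conv}(F)$; since it also lies in the exterior of $P$, it is contained in $\mathrm{conv}(F)\setminus\mathrm{int}(P)=\bigcup_{k=1}^m\overline{P^k}$. Distinct pockets have disjoint interiors and can meet only at hull vertices, the interior of $P$ separating them elsewhere. A chord therefore cannot pass from one pocket to another: to do so it would have to traverse a shared hull vertex, which is impossible because the vertices are in general position (no chord passes through a third vertex), or else it would have to enter $\mathrm{int}(P)$, contradicting that it is an epigonal. Hence each epigonal lies in a single $\overline{P^k}$, which is (i); and (ii) is then immediate, since epigonals in different pockets occupy disjoint open regions and cannot meet in the relative interior of either segment.

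Next I would identify $M_e^k$ with $M_d(P^k)$. The vertices of the pocket polygon $P^k$ are vertices of $P$, and its edges are the edges of $P$ along the reflex chain bounding the pocket, together with the single convex-hull edge (the lid) capping it. Every diagonal of $P^k$ lies in $\overline{P^k}\subset\mathrm{ext}(P)$ and joins two vertices non-consecutive in $P$, hence is an epigonal in pocket $k$; conversely a chord of $P$ lying in $\overline{P^k}$ joins two vertices of $P^k$, so it is either a diagonal of $P^k$ or the lid. Thus $M_e^k$ consists exactly of the diagonals of $P^k$ together with the lid, which is the content of $M_d(P^k)$ in the statement. Since $J\subset M_e$ meets $M_e^k$ in $J\cap M_d(P^k)$, we get $M_e^k\setminus J=M_d(P^k)\setminus J$, and substituting into the product formula yields $\chi(M_e\setminus J)=\prod_{k=1}^m\chi(M_d(P^k)\setminus J)$, as claimed.

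The step I expect to be the main obstacle is pinning down the correspondence $M_e^k=M_d(P^k)$ exactly, because of the lid. The capping hull edge of a pocket is an edge of the polygon $P^k$, yet it is itself an epigonal of $P$ (a chord of $P$ lying in the exterior), so it must be accounted for on the diagonal side of the identification; it is precisely this hull edge that forces $\chi(M_e)=0$ through Proposition \ref{pro:finite-points} whenever $J$ removes no lid. Matching the boundary of each pocket with the epigonals correctly — keeping the lid while discarding the reflex-chain edges of $P$ that are genuine edges of $P^k$ — together with the separation argument that a single chord cannot straddle two pockets, is where the real content lies; the multiplicativity at the end is purely formal.
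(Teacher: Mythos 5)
The paper itself omits the proof of Lemma \ref{lem:e} (its proof environment reads ``We omit the proof''), so there is no official argument to compare routes with; your proposal has to be judged on its own. Its skeleton is right and is surely what the authors intended: every epigonal lies in exactly one closed pocket $\overline{P^k}$, epigonals in distinct pockets never cross (they can meet in at most a shared hull vertex, and general position rules out a chord passing through a third vertex), hence $NC[M_e\setminus J]$ factors over the pockets and $\chi(M_e\setminus J)=\prod_{k=1}^m\chi(M_e^k\setminus J)$. The genuine gap is at the very point you flagged as the main obstacle, and your resolution of it is wrong. You correctly establish $M_e^k=M_d(P^k)\sqcup\{\ell_k\}$, where $\ell_k$ is the lid, but then assert that this ``is the content of $M_d(P^k)$ in the statement.'' It is not: $M_d(P^k)$ is, by the paper's definition, the set of diagonals of the polygon $P^k$, and the lid is an \emph{edge} of $P^k$, not a diagonal. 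Hence $M_e^k\setminus J=M_d(P^k)\setminus J$ holds only when $\ell_k\in J$; when $\ell_k\notin J$ one has instead $M_e^k\setminus J=(M_d(P^k)\setminus J)\sqcup\{\ell_k\}$, whose Euler characteristic is $0$, because no segment in the pocket crosses its own lid, so Proposition \ref{pro:chi-induction} applied with $v=\ell_k$ gives $\chi(A)=\chi(A\setminus\{\ell_k\})-\chi(A_{\ell_k})=0$ since $A_{\ell_k}=A\setminus\{\ell_k\}$.

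This is not a cosmetic issue, because the lemma as literally stated is false without an extra hypothesis, and your substitution hides exactly that. Counterexample: let $P$ be a non-convex quadrilateral $A_1A_2A_3A_4$ with reflex vertex $A_1$, and take $J=\varnothing$. Then $m=1$, $P^1$ is the triangle $A_1A_2A_4$, and $M_e=\{A_2A_4\}$, so $\chi(M_e\setminus J)=1-1=0$, while $\prod_k\chi(M_d(P^k)\setminus J)=\chi(\varnothing)=1$. What your decomposition actually proves is the corrected identity $\chi(M_e\setminus J)=\prod_{k=1}^m\chi\bigl((M_d(P^k)\cup\{\ell_k\})\setminus J\bigr)$, which agrees with the lemma's formula precisely when every lid belongs to $J$, and which forces $\chi(M_e\setminus J)=0$ otherwise. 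That hypothesis is satisfied in the one place the paper uses the lemma --- in the proof of Theorem \ref{th:main1} (B) it is first argued that all hull edges of $P$ that are epigonals must lie in $J$ --- so the repair is to keep your parts (i), (ii) and the product formula, then split into cases according to whether $\ell_k\in J$ (using the computation above, or the heart/star-set argument of Proposition \ref{pro:starset=0}, in the case $\ell_k\notin J$), and state the lemma with the added hypothesis that $J$ contains all lids, rather than folding the lid into $M_d(P^k)$.
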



\begin{defn}\label{def:xi}
Given a non-convex polygon $P$, $J\in NC_c[M_d]$ and $I\subset J$, let
$$\xi(I)=\begin{cases}
0,& \text{ if } I\in NC_{nc}[J]\setminus \{\varnothing\} \text{ or } I\in NC_c[J]\setminus \{J\},\\
1,& \text{ if } I=\varnothing \text{ or } I=J.
\end{cases}$$
\end{defn}

\begin{pro}\label{pro:xi-eta}
Let $P$ be a non-convex polygon and $J\in NC_c[M_d]$. Suppose $J_1,\ldots,J_m\in NC_c[J]$ are all the minimal sets. Then
$$\chi(M_d\setminus J)=(-1)^{|P|+\#J}\sum_{k=1}^m (-1)^{k}\sum_{1\le i_1<\cdots<i_k\le m}\xi(J_{i_1}\cup\cdots\cup J_{i_k}).$$
Assume $I_1,\ldots,I_m\in NC_{nc}[J]$ are all the maximal sets. Then
$$\chi(M_d\setminus J)=(-1)^{|P|}\sum_{k=1}^m (-1)^{k-1}\sum_{1\le i_1<\cdots<i_k\le m}\xi(I_{i_1}\cap\cdots\cap I_{i_k}).$$
\end{pro}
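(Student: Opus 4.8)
The plan is to derive both identities from the two expressions for $\chi(M_d\setminus J)$ already at our disposal: Theorem~\ref{th:2}, which writes $\chi(M_d\setminus J)=(-1)^{|P|+1}\sum_{I\in NC_c[J]}(-1)^{\#I}$, for the first formula, and Lemma~\ref{lem:d2}, which gives $\chi(M_d\setminus J)=(-1)^{|P|}\sum_{I\in NC_{nc}[J]}(-1)^{\#I}$, for the second. In each case the sum will be collapsed by inclusion--exclusion over the extremal sets. The structural fact driving everything is that, since $J$ is pairwise non-crossing we have $NC[J]=2^J$, and inside this Boolean lattice the family $NC_c[J]$ is \emph{upward closed} while its complement $NC_{nc}[J]$ is \emph{downward closed}: if $I\in NC_c[J]$ and $I\subset I'\subset J$, then adding the extra diagonals of $I'\setminus I$ only cuts convex sub-polygons into convex sub-polygons (a chord of a convex polygon splits it into two convex pieces), so $I'\in NC_c[J]$. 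Hence $NC_c[J]=\bigcup_{i=1}^m\{I:J_i\subset I\subset J\}$ is the union of the principal filters generated by its minimal sets $J_1,\dots,J_m$, and dually $NC_{nc}[J]=\bigcup_{i=1}^m\{I:I\subset I_i\}$ is the union of the principal ideals generated by its maximal sets $I_1,\dots,I_m$ (the two occurrences of $m$ being, of course, independent).

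For the first identity I would apply inclusion--exclusion to this union of filters. For a nonempty $S\subseteq\{1,\dots,m\}$ the intersection of the filters indexed by $S$ is the interval $\{I:K_S\subset I\subset J\}$ with $K_S:=\bigcup_{i\in S}J_i$, on which the alternating sum telescopes,
\[
\sum_{K_S\subset I\subset J}(-1)^{\#I}=(-1)^{\#K_S}(1-1)^{\#(J\setminus K_S)},
\]
equal to $(-1)^{\#J}$ when $K_S=J$ and to $0$ otherwise. Here the hypothesis that $P$ is non-convex enters: it forces $\varnothing\notin NC_c[J]$, so every $J_i$, and hence every union $K_S$, is nonempty; as $K_S\in NC_c[J]$ by upward closure, Definition~\ref{def:xi} gives that the displayed value is precisely $(-1)^{\#J}\xi(K_S)$. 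Substituting into $\chi(M_d\setminus J)=(-1)^{|P|+1}\sum_{I\in NC_c[J]}(-1)^{\#I}$ and tallying the signs, using $(-1)^{|P|+1}(-1)^{\#S-1}(-1)^{\#J}=(-1)^{|P|+\#J+\#S}$, produces exactly
\[
\chi(M_d\setminus J)=(-1)^{|P|+\#J}\sum_{k=1}^m(-1)^k\!\!\sum_{1\le i_1<\cdots<i_k\le m}\!\!\xi(J_{i_1}\cup\cdots\cup J_{i_k}).
\]

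The second identity follows from the dual computation on the ideals. The intersection of the ideals indexed by $S$ is $\{I:I\subset L_S\}$ with $L_S:=\bigcap_{i\in S}I_i$, and now
\[
\sum_{I\subset L_S}(-1)^{\#I}=(1-1)^{\#L_S}=\xi(L_S),
\]
since $L_S\in NC_{nc}[J]$ (by downward closure) cannot equal $J$, so $\xi(L_S)=1$ precisely when $L_S=\varnothing$ and $\xi(L_S)=0$ otherwise, matching $0^{\#L_S}$. Feeding this into Lemma~\ref{lem:d2} and collecting terms with the factor $(-1)^{\#S-1}$ from inclusion--exclusion gives the claimed expression with prefactor $(-1)^{|P|}$ and alternation $(-1)^{k-1}$.

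I expect the only genuine work to be the two bookkeeping verifications that make $\xi$ appear cleanly: that $NC_c[J]$ is a filter and $NC_{nc}[J]$ an ideal in $2^J$, and that unions of minimal sets remain in $NC_c[J]$ while intersections of maximal sets remain in $NC_{nc}[J]$, so that the telescoped interval sums coincide exactly with the values prescribed by $\xi$. The geometric half of this---that refining a convex partition preserves convexity, so $NC_c[J]$ is upward closed, while non-convexity of $P$ keeps $\varnothing$ out of $NC_c[J]$---is the step most prone to oversight; once it is secured, the inclusion--exclusion and the sign accounting are entirely routine.
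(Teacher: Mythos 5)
Your proposal is correct and follows essentially the same route as the paper: both apply inclusion--exclusion to the representation of $NC_c[J]$ as a union of principal filters over the minimal sets (resp.\ $NC_{nc}[J]$ as a union of principal ideals over the maximal sets), collapse the interval alternating sums to $(-1)^{\#J}\xi(K_S)$ and $\xi(L_S)$, and feed them into Theorem~\ref{th:2} and Lemma~\ref{lem:d2} respectively. The only cosmetic difference is that the paper packages the first interval sum as an auxiliary function $\eta$ and leaves the upward/downward closure of $NC_c[J]$ and $NC_{nc}[J]$ implicit, whereas you verify it explicitly.
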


\begin{proof}

Given $I\in NC_c[J]$, let $\eta(I)=\sum_{I\subset I'\subset J}(-1)^{\#I'}$. Then $\eta(I)=\begin{cases}0,& I\ne J,\\(-1)^{\# J},&I=J,\end{cases}=(-1)^{\# J}\xi(I)$.

Let $J_1,\ldots,J_m\in NC_c[J]$ be all the minimal sets, i.e., for any $I\in NC_c[J]$, there exists $i\in \{1,\ldots,m\}$ such that $I\supset J_i$. It follows from Theorem \ref{th:2} and the principle of inclusion-exclusion that
\begin{align*}
\chi(M_d\setminus J)&=(-1)^{|P|+1}\sum_{I\in NC_c[J]} (-1)^{\#I}
\\&=(-1)^{|P|+1}\sum_{k=1}^m (-1)^{k-1}\sum_{1\le i_1<\cdots<i_k\le m}\eta(J_{i_1}\cup\cdots\cup J_{i_k})
\\&=(-1)^{|P|+\#J}\sum_{k=1}^m (-1)^{k}\sum_{1\le i_1<\cdots<i_k\le m}\xi(J_{i_1}\cup\cdots\cup J_{i_k}).
\end{align*}

Given $I\in NC_{nc}[J]$, then we have $\sum_{I'\subset I}(-1)^{\#I'}=\begin{cases}0,& I\ne \varnothing,\\1,&I=\varnothing,\end{cases}=\xi(I)$.

Let $I_1,\ldots,I_m\in NC_{nc}[J]$ be all the maximal sets, i.e., for any $I\in NC_{nc}[J]$, there exists $i\in \{1,\ldots,m\}$ such that $I\subset I_i$. Then Lemma \ref{lem:d2} together with the principle of inclusion-exclusion deduce that
\begin{align*}
\chi(M_d\setminus J)&=(-1)^{|P|}\sum_{I\in NC_{nc}[J]} (-1)^{\#I}
\\&=(-1)^{|P|}\sum_{k=1}^m (-1)^{k-1}\sum_{1\le i_1<\cdots<i_k\le m}\xi(I_{i_1}\cap\cdots\cap I_{i_k}).
\end{align*}
\end{proof}

\begin{remark}
In Proposition \ref{pro:xi-eta}, the family of the sets $J_1,\ldots,J_m$ (resp., $I_1,\ldots,I_m$) forms a Sperner family, i.e., none of the sets is contained in another.
\end{remark}

\begin{cor}\label{cor:union-minimal}
Let $P$ be a non-convex polygon and $J\in NC_c[M_d]$. Let $J_1,\ldots,J_m\in NC_c[J]$ be all the minimal sets. If $J_1\cup\cdots\cup J_m\ne J$, then $\chi(M_d\setminus J)=0$.
\end{cor}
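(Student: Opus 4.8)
The plan is to read the result off directly from the first identity in Proposition \ref{pro:xi-eta}, which already expresses $\chi(M_d\setminus J)$ as a signed sum
$$(-1)^{|P|+\#J}\sum_{k=1}^m (-1)^{k}\sum_{1\le i_1<\cdots<i_k\le m}\xi(J_{i_1}\cup\cdots\cup J_{i_k})$$
ranging over all nonempty subcollections of the minimal sets $J_1,\ldots,J_m$ of $NC_c[J]$. The entire argument then reduces to showing that each $\xi$-value appearing here vanishes under the hypothesis $J_1\cup\cdots\cup J_m\ne J$, so that the whole sum collapses to $0$.

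First I would record the structural fact that, because $P$ is non-convex, $\varnothing\notin NC_c[J]$: an empty set of diagonals leaves $P$ undivided, which is a convex partition only when $P$ is already convex. Hence every minimal set $J_i$ is nonempty, and therefore every union $J_{i_1}\cup\cdots\cup J_{i_k}$, containing at least one $J_{i_j}$, is nonempty as well. Next I would unwind Definition \ref{def:xi}: since $J$ is pairwise non-crossing, $NC[J]$ is exactly the collection of all subsets of $J$, so every $I\subset J$ lands in precisely one of the four cases of $\xi$, and $\xi(I)=1$ holds only when $I=\varnothing$ or $I=J$. Combining these two observations, $\xi(J_{i_1}\cup\cdots\cup J_{i_k})$ can be nonzero only if $J_{i_1}\cup\cdots\cup J_{i_k}=J$.

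Finally I would note that any such subcollection union is contained in the full union $J_1\cup\cdots\cup J_m$, which by hypothesis is a proper subset of $J$; hence no subcollection union can equal $J$, every $\xi$-value in the formula is $0$, and the displayed alternating sum vanishes, giving $\chi(M_d\setminus J)=0$. I do not expect any genuine obstacle beyond correctly interpreting the definition of $\xi$ together with the nonemptiness of the minimal sets $J_i$; the substantive computation has already been carried out in Proposition \ref{pro:xi-eta}, and this corollary is simply the special case in which no union of minimal sets recovers all of $J$.
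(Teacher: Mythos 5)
Your proposal is correct and follows essentially the same route as the paper: invoke the first identity of Proposition \ref{pro:xi-eta} and observe via Definition \ref{def:xi} that every term $\xi(J_{i_1}\cup\cdots\cup J_{i_k})$ vanishes because no union of minimal sets can equal $J$ under the hypothesis. Your explicit remark that non-convexity of $P$ forces $\varnothing\notin NC_c[J]$, so that these unions are nonempty and thus avoid the case $\xi(\varnothing)=1$, is a detail the paper leaves implicit; it is a welcome clarification but not a different argument.
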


\begin{proof}
Since $J_1\cup\cdots\cup J_m\ne J$, for any $1\le i_1<\cdots<i_k\le m$, $J_{i_1}\cup\cdots\cup J_{i_k}\ne J$. Thus by Definition \ref{def:xi}, $\xi(J_{i_1}\cup\cdots\cup J_{i_k})=0$, and Proposition \ref{pro:xi-eta} then implies $\chi(M_d\setminus J)=0$.
\end{proof}

According to Corollary \ref{cor:union-minimal}, we derive Theorem \ref{th:2} (2b).
The following result is an analogue of Corollary \ref{cor:union-minimal}.

\begin{cor}\label{cor:intersection-maximal}
Let $P$ be a non-convex polygon and $J\in NC_c[M_d]$. Let $I_1,\ldots,I_m\in NC_{nc}[J]$ be all the maximal sets. If $I_1\cap\cdots\cap I_m\ne \varnothing$, then $\chi(M_d\setminus J)=0$.
\end{cor}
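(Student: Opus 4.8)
The plan is to dualize the proof of Corollary~\ref{cor:union-minimal}: instead of the union formula over minimal sets, I would invoke the second (intersection) formula of Proposition~\ref{pro:xi-eta},
$$\chi(M_d\setminus J)=(-1)^{|P|}\sum_{k=1}^m (-1)^{k-1}\sum_{1\le i_1<\cdots<i_k\le m}\xi(I_{i_1}\cap\cdots\cap I_{i_k}),$$
and show that every summand $\xi(I_{i_1}\cap\cdots\cap I_{i_k})$ vanishes, forcing $\chi(M_d\setminus J)=0$.

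First I would use the hypothesis to control emptiness: since $I_{i_1}\cap\cdots\cap I_{i_k}\supseteq I_1\cap\cdots\cap I_m\ne\varnothing$ for every subcollection $1\le i_1<\cdots<i_k\le m$, no partial intersection is empty. This eliminates the case $I=\varnothing$ in Definition~\ref{def:xi}.

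Next I would eliminate the remaining case $I=J$ that would give $\xi=1$. Because $J\in NC_c[M_d]$, the set $J$ provides a convex partition of $P$, so $J\in NC_c[J]$ and hence $J\notin NC_{nc}[J]$. Each maximal set $I_j$ lies in $NC_{nc}[J]$, so $I_j\ne J$; as $J$ is non-crossing, every subset of $J$ lies in $NC[J]$ and $I_j\subseteq J$, whence $I_j\subsetneq J$. Therefore each partial intersection satisfies $I_{i_1}\cap\cdots\cap I_{i_k}\subseteq I_{i_1}\subsetneq J$ and is never equal to $J$.

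Combining these two observations, every partial intersection is a nonempty proper subset of $J$. Since $NC[J]=NC_c[J]\sqcup NC_{nc}[J]$ partitions all subsets of $J$, such a set lies in either $NC_{nc}[J]\setminus\{\varnothing\}$ or $NC_c[J]\setminus\{J\}$, and in both cases Definition~\ref{def:xi} gives $\xi=0$. Substituting into the displayed formula yields $\chi(M_d\setminus J)=0$. I do not anticipate a genuine obstacle: the argument is a direct dual of Corollary~\ref{cor:union-minimal}, and the only point to verify carefully is that $\xi$ is defined on \emph{all} subsets of $J$ (it is, as just noted) so that the dichotomy ``nonempty proper subset $\Rightarrow\xi=0$'' applies to each intersection.
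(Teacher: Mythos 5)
Your proof is correct and is exactly the argument the paper intends: the paper omits the proof, remarking only that the result is an ``analogue'' of Corollary~\ref{cor:union-minimal}, and your dualization---partial intersections are nonempty by the hypothesis, and proper since each maximal $I_j\in NC_{nc}[J]$ differs from $J\in NC_c[J]$, so every $\xi$-term in the second formula of Proposition~\ref{pro:xi-eta} vanishes---is precisely that intended argument. Your added care in checking that $\xi$ is defined on all subsets of $J$ and that $J\notin NC_{nc}[J]$ fills in details the paper leaves implicit.
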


\subsection{  Proof of Proposition \ref{pro:every} }
Now we show a proof of Proposition \ref{pro:every}. Note that Theorem \ref{th:2} (1) and (2) provide  the examples of the case $l\in\{-1,0,1\}$ of Proposition \ref{pro:every}  (for $l=0$, we can take $P$ non-convex and $J=\varnothing$, and for $l=\pm 1$, we can take $P$ convex and $J=\varnothing$). Therefore we only need to consider the case of $|l|>1$. We first pay attention to the case of $l>1$.

\begin{figure}[H]
\centering
\begin{tikzpicture}[scale=2]
\coordinate (B1) at (0,0);
\coordinate (C1) at (0.5,-0.866);
\coordinate (D1) at (1.366,-1.366);

\coordinate (B2) at (1,0);
\coordinate (C2) at (1,1);
\coordinate (D2) at (1+0.5,1.866);
\coordinate (e1) at (1,0.5); \draw  (e1) node [ left]{\small$e_1$};
\coordinate (e2) at (1/2+1/2+0.866/2,1/2+0.5/2); \draw  (e2) node [ above ]{\small$e_2$};
\coordinate (e3) at (1/2+1/2+0.866/2,0.5/2); \draw  (e3) node [ below ]{\small$e_3$};
\coordinate (C3) at (1+0.866+0.5,0.5-0.866);
\coordinate (B3) at (1+0.866,0.5);
\coordinate (D3) at (1+0.866+1.366,0.5-1.366);

\coordinate (B4) at (1+0.866+1,0.5);
\coordinate (C4) at (1+0.866+1,0.5+1);
\coordinate (D4) at (1+0.866+1+0.5,0.5+1.866);

\coordinate (B5) at (1+0.866+1+0.866,0.5+0.5);
\coordinate (C5) at (1+0.866+1+0.866+0.5,0.5+0.5-0.866);
\coordinate (D5) at (1+0.866+1+0.866+1.366,0.5+0.5-1.366);

\coordinate (B6) at (1+0.866+1+0.866+1,0.5+0.5);
\coordinate (C6) at (1+0.866+1+0.866+1,0.5+0.5+1);
\coordinate (D6) at (1+0.866+1+0.866+1+0.5,0.5+0.5+1.866);

\coordinate (B7) at (1+0.866+1+0.866+1+0.866,0.5+0.5+0.5);
\coordinate (B8) at (1+0.866+1+0.866+1+0.866+1,0.5+0.5+0.5);
\coordinate (B9) at (1+0.866+1+0.866+1+0.866+1+0.866,0.5+0.5+0.5+0.5);

\coordinate (C) at (2*\pgfmathresult, 1);

\draw (B1)node [above]{\scriptsize $A_1$};
\draw (B2)node [below]{\scriptsize$A_2$} ;
\draw (B3)node [above right]{\scriptsize$A_4$};
\draw  (C2) node [above left]{\scriptsize$A_6$};
\draw(D2) node [ left]{\scriptsize$A_5$};
\draw  (C3) node [right]{\scriptsize$A_3$};
\draw[thick,red] (B2)--(C3)--(B3)--(D2)--(C2)--(B1)--cycle;
\draw[thick,blue,densely dotted] (B2)--(B3)--(C2)--cycle;
\end{tikzpicture}
\caption{\label{fig:l=2}An example of $\chi(M_d(P)\setminus J)=2$ used in the proof of Proposition \ref{pro:every}. Here $P$ is the polygon with 6 (red) edges and $J$ is the set of 3 (blue) dotted  non-crossing diagonals. }
\end{figure}
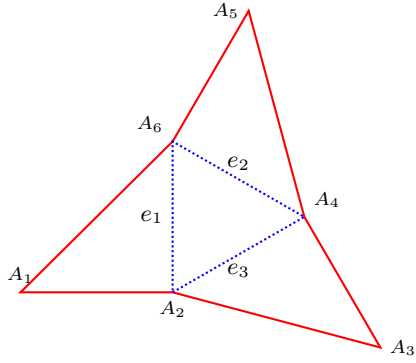

\begin{figure}[H]
\centering
\begin{tikzpicture}[scale=2]

\coordinate (B1) at (0,0);
\coordinate (C1) at (0.5,-0.866);
\coordinate (D1) at (1.366,-1.366);

\coordinate (B2) at (1,0);
\coordinate (C2) at (1,1);
\coordinate (D2) at (1+0.5,1.866);
\coordinate (e1) at (1,0.5); \draw  (e1) node [ left]{\small$e_1$};
\coordinate (e2) at (1/2+1/2+0.866/2,1/2+0.5/2); \draw  (e2) node [ above ]{\small$e_2$};
\coordinate (e3) at (1/2+1/2+0.866/2,0.5/2); \draw  (e3) node [ below ]{\small$e_3$};
\coordinate (B3) at (1+0.866,0.5);
\coordinate (C3) at (1+0.866+0.5,0.5-0.866);
\coordinate (D3) at (1+0.866+1.366,0.5-1.366);
\coordinate (e4) at (1/2+0.866/2+1/2+0.866/2+0.5/2,0.5/2+ 0.5/2-0.866/2);\draw  (e4) node [  left ]{\small$e_4$};
\coordinate (e5) at (1/2+0.866/2+1/2+1/2+0.866/2+0.5/2,0.5/2+ 0.5/2-0.866/2);\draw  (e5) node [  right ]{\small$e_5$};
\coordinate (e6) at (1/2+0.866/2+1/2+1/2+0.866/2,0.5/2+ 0.5/2);\draw  (e6) node [  above ]{\small$e_6$};
\coordinate (B4) at (1+0.866+1,0.5);
\coordinate (C4) at (1+0.866+1,0.5+1);
\coordinate (D4) at (1+0.866+1+0.5,0.5+1.866);

\coordinate (B5) at (1+0.866+1+0.866,0.5+0.5);
\coordinate (C5) at (1+0.866+1+0.866+0.5,0.5+0.5-0.866);
\coordinate (D5) at (1+0.866+1+0.866+1.366,0.5+0.5-1.366);

\coordinate (B6) at (1+0.866+1+0.866+1,0.5+0.5);
\coordinate (C6) at (1+0.866+1+0.866+1,0.5+0.5+1);
\coordinate (D6) at (1+0.866+1+0.866+1+0.5,0.5+0.5+1.866);

\coordinate (B7) at (1+0.866+1+0.866+1+0.866,0.5+0.5+0.5);
\coordinate (B8) at (1+0.866+1+0.866+1+0.866+1,0.5+0.5+0.5);
\coordinate (B9) at (1+0.866+1+0.866+1+0.866+1+0.866,0.5+0.5+0.5+0.5);

\coordinate (C) at (2*\pgfmathresult, 1);

\draw (B1)node [above]{\scriptsize $A_1$};
\draw(B2)node [below]{\scriptsize$A_2$} ;
\draw(B3)node [below right]{\scriptsize$A_7$} ;
\draw(B4)node [right]{\scriptsize$A_5$}  ;

\draw  (C2) node [ left]{\scriptsize$A_9$}; \draw (D2) node [ left]{\scriptsize$A_8$};
\draw  (C3) node [below left]{\scriptsize$A_3$} ; \draw(D3) node [right]{\scriptsize$A_4$};
\draw  (C4) node [ left]{\scriptsize$A_6$} ;

\draw[thick,red] (B2)--(C3)--(D3)--(B4)--(C4)--(B3)--(D2)--(C2)--(B1)--cycle;
\draw[thick,blue,densely dotted] (B2)--(C2)--(B3)--cycle;
\draw[thick,blue,densely dotted] (C3)--(B4)--(B3)--cycle;
\end{tikzpicture}
\caption{\label{fig:l=3}An example of $\chi(M_d(P)\setminus J)=3$ used in the proof of Proposition \ref{pro:every}. Here $P$ is the polygon with 9 (red) edges and $J$ is the set of 6 (blue) dotted  non-crossing diagonals. }
\end{figure}

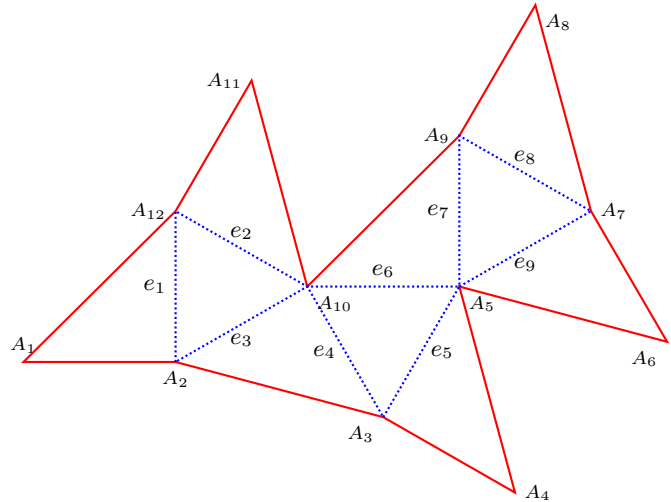
\begin{figure}[H]
\centering
\begin{tikzpicture}[scale=2]

\coordinate (B1) at (0,0);
\coordinate (C1) at (0.5,-0.866);
\coordinate (D1) at (1.366,-1.366);

\coordinate (B2) at (1,0);
\coordinate (C2) at (1,1);
\coordinate (D2) at (1+0.5,1.866);
\coordinate (e1) at (1,0.5); \draw  (e1) node [ left]{\small$e_1$};
\coordinate (e2) at (1/2+1/2+0.866/2,1/2+0.5/2); \draw  (e2) node [ above ]{\small$e_2$};
\coordinate (e3) at (1/2+1/2+0.866/2,0.5/2); \draw  (e3) node [ below ]{\small$e_3$};
\coordinate (B3) at (1+0.866,0.5);
\coordinate (C3) at (1+0.866+0.5,0.5-0.866);
\coordinate (D3) at (1+0.866+1.366,0.5-1.366);
\coordinate (e4) at (1/2+0.866/2+1/2+0.866/2+0.5/2,0.5/2+ 0.5/2-0.866/2);\draw  (e4) node [  left ]{\small$e_4$};
\coordinate (e5) at (1/2+0.866/2+1/2+1/2+0.866/2+0.5/2,0.5/2+ 0.5/2-0.866/2);\draw  (e5) node [  right ]{\small$e_5$};
\coordinate (e6) at (1/2+0.866/2+1/2+1/2+0.866/2,0.5/2+ 0.5/2);\draw  (e6) node [  above ]{\small$e_6$};
\coordinate (B4) at (1+0.866+1,0.5);
\coordinate (C4) at (1+0.866+1,0.5+1);
\coordinate (D4) at (1+0.866+1+0.5,0.5+1.866);
\coordinate (e7) at (1+1.866,0.5+0.5); \draw  (e7) node [ left]{\small$e_7$};
\coordinate (e8) at (1/2+1/2+0.866/2+1.866,1/2+0.5/2+0.5); \draw  (e8) node [ above ]{\small$e_8$};
\coordinate (e9) at (1/2+1/2+0.866/2+1.866,0.5/2+0.5); \draw  (e9) node [ below ]{\small$e_9$};
\coordinate (B5) at (1+0.866+1+0.866,0.5+0.5);
\coordinate (C5) at (1+0.866+1+0.866+0.5,0.5+0.5-0.866);
\coordinate (D5) at (1+0.866+1+0.866+1.366,0.5+0.5-1.366);

\coordinate (B6) at (1+0.866+1+0.866+1,0.5+0.5);
\coordinate (C6) at (1+0.866+1+0.866+1,0.5+0.5+1);
\coordinate (D6) at (1+0.866+1+0.866+1+0.5,0.5+0.5+1.866);

\coordinate (B7) at (1+0.866+1+0.866+1+0.866,0.5+0.5+0.5);
\coordinate (B8) at (1+0.866+1+0.866+1+0.866+1,0.5+0.5+0.5);
\coordinate (B9) at (1+0.866+1+0.866+1+0.866+1+0.866,0.5+0.5+0.5+0.5);

\coordinate (C) at (2*\pgfmathresult, 1);

\draw (B1)node [above]{\scriptsize $A_1$} ;
\draw (B2)node [below]{\scriptsize$A_2$};
\draw (B3)node [below right]{\scriptsize$A_{10}$};
\draw (B4)node [below right]{\scriptsize$A_5$};
\draw (B5)node [right]{\scriptsize$A_7$};
\draw  (C2) node [ left]{\scriptsize$A_{12}$};\draw(D2) node [ left]{\scriptsize$A_{11}$};
\draw  (C3) node [below left]{\scriptsize$A_3$};\draw (D3) node [right]{\scriptsize$A_4$};
\draw  (C4) node [ left]{\scriptsize$A_9$};\draw (D4) node [below right]{\scriptsize$A_8$};
\draw  (C5) node [below left]{\scriptsize$A_6$};

\draw[thick,red] (B2)--(C3)--(D3)--(B4)--(C5)--(B5)--(D4)--(C4)--(B3)--(D2)--(C2)--(B1)--cycle;
\draw[thick,blue,densely dotted] (B2)--(C2)--(B3)--cycle;
\draw[thick,blue,densely dotted] (C3)--(B4)--(B3)--cycle;
\draw[thick,blue,densely dotted] (B4)--(B5)--(C4)--cycle;
\end{tikzpicture}
\caption{\label{fig:l=4}An example of $\chi(M_d(P)\setminus J)=4$ used in the proof of Proposition \ref{pro:every}. Here $P$ is the polygon with 12 (red) edges and $J$ is the set of 9 (blue) dotted  non-crossing diagonals. }
\end{figure}

Construct $P$ with $|P|=3|l|$ and linearly order the vertices of $P$ in counter-clockwise direction, $A_1,A_2,\ldots,A_{3|l|}$ (see Figs.~\ref{fig:l=2},\ref{fig:l=3},\ref{fig:l=4},\ref{fig:l=5678} for $l=2,3,4,5,6,7,8$, respectively). We refer readers to Appendix for the detailed information of such polygons. Set $$X=\{\{3k+2,3|l|-3k,3|l|-3k-2\}:0\leq k\leq {|l|}/{2}-1,k\in\mathbb{Z}\}\cup\{\{3k+2,3k,3|l|-3k+1\}:1\leq k< |l|/2,k\in\mathbb{Z}\}$$
and $J=\{A_iA_j:\{i,j,k\}\in X\}$. Now we label the diagonals as
$$\begin{array}{llll}
e_{6k+1}=A_{3k+2}A_{3|l|-3k},& e_{6k+2}=A_{3|l|-3k-2}A_{3l-3k},& e_{6k+3}=A_{3k+2}A_{3|l|-3k-2},& \forall\ 0\leq k\leq {|l|}/{2}-1,k\in\mathbb{Z};\\
e_{6k-2}=A_{3k}A_{3|l|-3k+1},& e_{6k-1}=A_{3k+2}A_{3k},& e_{6k}=A_{3k+2}A_{3|l|-3k+1},& \forall\ 1\leq k< {|l|}/{2},k\in\mathbb{Z}.\end{array}$$
Then $J=\{e_1,\cdots,e_{3(|l|-1)}\}$, and we can check that for $I\subset J$, $I\in NC_c[J]$ if and only if $$I\cap\{e_k,e_{k+1}\}\neq \varnothing,\ \forall\ 1\leq k\leq 3|l|-4 \ \ \text{and}\ \  I\cap\{e_{3k-2},e_{3k}\}\neq \varnothing,\ \forall\ 1\leq k\leq |l|-1.$$ Set $J_k=\{e_{k+1},\cdots,e_{3(|l|-1)}\},\ 0\leq k<3(|l|-1),\ J_{3(|l|-1)}=\varnothing$.
Given $0< k\leq3(|l|-1)$, for any $I$ satisfying $J_{k-1}\subset I\subset J$, one can verify that
$$\begin{array}{cccccc}
I\setminus\{e_k\}\in NC_c[J]&\Leftrightarrow& e_{k-1}\in I\in NC_c[J]&\Leftrightarrow& J_{k-2}\subset I\in NC_c[J],& \text{ if } 3\nmid k;\\
I\setminus\{e_k\}\in NC_c[J]&\Leftrightarrow& \{e_{k-1},e_{k-2}\}\subset I\in NC_c[J]&\Leftrightarrow& J_{k-3}\subset I\in NC_c[J],& \text{ if } 3\mid k.
\end{array}$$

 Set $$a_k=\sum_{J_{k}\subset I\in NC_c[J]} (-1)^{\#I},\ 0\leq k\leq3(|l|-1).$$
 Then $a_0=1,\ a_1=0$ and for $ 2\leq k\leq3(|l|-1)$, we have
 \begin{align*}
 a_k&=a_{k-1}+\sum_{J_{k-1}\subset I,I\setminus\{e_k\}\in NC_c[J]} (-1)^{\#(I\setminus\{e_k\})}
 \\&=a_{k-1}+\left\{\begin{array}{ll}\sum\limits_{J_{k-2}\subset I\in NC_c[J]} (-1)^{\#(I\setminus\{e_k\})}, &3\nmid k,\\ \sum\limits_{J_{k-3}\subset I\in NC_c[J]} (-1)^{\#(I\setminus\{e_k\})}, &3\mid k,\end{array}\right.
 \\&=\left\{\begin{array}{ll}a_{k-1}-a_{k-2}, &3\nmid k,\\ a_{k-1}-a_{k-3}, &3\mid k.\end{array}\right.
 \end{align*}
 Using this formula by induction we have $$a_{3k}=(-1)^{k}(k+1),\ \forall\ 0\leq k\leq |l|;\ \ a_{3k+1}=(-1)^{k}k,\ a_{3k+2}=(-1)^{k+1},\ \forall\ 0\leq k< |l|.$$ Therefore \begin{align*}
\chi(M_d(P)\setminus J)=(-1)^{|P|+1}\sum_{I\in NC_c[J]} (-1)^{\#I}
=(-1)^{3|l|+1}a_{3(|l|-1)}=(-1)^{3|l|+1}(-1)^{|l|-1}
|l|=|l|.
\end{align*}
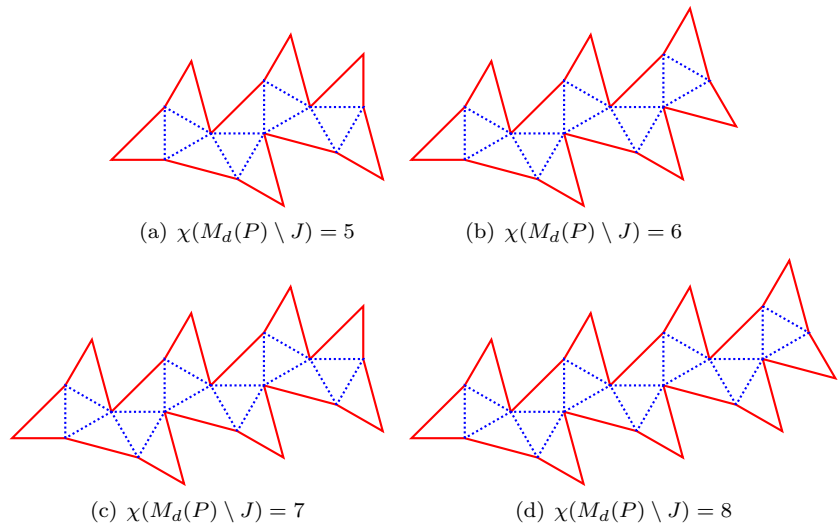
\begin{figure}[H]
\centering
\subfigure[$\chi(M_d(P)\setminus J)=5$]{
\begin{tikzpicture}[scale=0.7]
\coordinate (B1) at (0,0);
\coordinate (C1) at (0.5,-0.866);
\coordinate (D1) at (1.366,-1.366);

\coordinate (B2) at (1,0);
\coordinate (C2) at (1,1);
\coordinate (D2) at (1+0.5,1.866);

\coordinate (B3) at (1+0.866,0.5);
\coordinate (C3) at (1+0.866+0.5,0.5-0.866);
\coordinate (D3) at (1+0.866+1.366,0.5-1.366);

\coordinate (B4) at (1+0.866+1,0.5);
\coordinate (C4) at (1+0.866+1,0.5+1);
\coordinate (D4) at (1+0.866+1+0.5,0.5+1.866);

\coordinate (B5) at (1+0.866+1+0.866,0.5+0.5);
\coordinate (C5) at (1+0.866+1+0.866+0.5,0.5+0.5-0.866);
\coordinate (D5) at (1+0.866+1+0.866+1.366,0.5+0.5-1.366);

\coordinate (B6) at (1+0.866+1+0.866+1,0.5+0.5);
\coordinate (C6) at (1+0.866+1+0.866+1,0.5+0.5+1);
\coordinate (D6) at (1+0.866+1+0.866+1+0.5,0.5+0.5+1.866);

\coordinate (B7) at (1+0.866+1+0.866+1+0.866,0.5+0.5+0.5);
\coordinate (B8) at (1+0.866+1+0.866+1+0.866+1,0.5+0.5+0.5);
\coordinate (B9) at (1+0.866+1+0.866+1+0.866+1+0.866,0.5+0.5+0.5+0.5);


\draw[thick,red] (B2)--(C3)--(D3)--(B4)--(C5) -- (D5)--(B6)--(C6)--(B5)--(D4)--(C4)--(B3)--(D2)--(C2)--(B1)--cycle;
\draw[densely dotted,thick,blue] (B2)--(C2)--(B3)--cycle;
\draw[densely dotted,thick,blue] (C3)--(B4)--(B3)--cycle;
\draw[densely dotted,thick,blue] (B4)--(B5)--(C4)--cycle;
\draw[densely dotted,thick,blue] (C5)--(B5)--(B6)--cycle;
\end{tikzpicture}
}
\subfigure[$\chi(M_d(P)\setminus J)=6$]{
\begin{tikzpicture}[scale=0.7]
\coordinate (B1) at (0,0);
\coordinate (C1) at (0.5,-0.866);
\coordinate (D1) at (1.366,-1.366);

\coordinate (B2) at (1,0);
\coordinate (C2) at (1,1);
\coordinate (D2) at (1+0.5,1.866);

\coordinate (B3) at (1+0.866,0.5);
\coordinate (C3) at (1+0.866+0.5,0.5-0.866);
\coordinate (D3) at (1+0.866+1.366,0.5-1.366);

\coordinate (B4) at (1+0.866+1,0.5);
\coordinate (C4) at (1+0.866+1,0.5+1);
\coordinate (D4) at (1+0.866+1+0.5,0.5+1.866);

\coordinate (B5) at (1+0.866+1+0.866,0.5+0.5);
\coordinate (C5) at (1+0.866+1+0.866+0.5,0.5+0.5-0.866);
\coordinate (D5) at (1+0.866+1+0.866+1.366,0.5+0.5-1.366);

\coordinate (B6) at (1+0.866+1+0.866+1,0.5+0.5);
\coordinate (C6) at (1+0.866+1+0.866+1,0.5+0.5+1);
\coordinate (D6) at (1+0.866+1+0.866+1+0.5,0.5+0.5+1.866);

\coordinate (B7) at (1+0.866+1+0.866+1+0.866,0.5+0.5+0.5);
\coordinate (C7) at (1+0.866+1+0.866+1+0.866+0.5,0.5+0.5+0.5-0.866);
\coordinate (D7) at (1+0.866+1+0.866+1+0.866+1.366,0.5+0.5+0.5-1.366);

\coordinate (B8) at (1+0.866+1+0.866+1+0.866+1,0.5+0.5+0.5);
\coordinate (C8) at (1+0.866+1+0.866+1+0.866+1,0.5+0.5+0.5+1);
\coordinate (D8) at (1+0.866+1+0.866+1+0.866+1+0.5,0.5+0.5+0.5+1.866);

\coordinate (B9) at (1+0.866+1+0.866+1+0.866+1+0.866,0.5+0.5+0.5+0.5);


\draw[thick,red] (B2)--(C3)--(D3)--(B4)--(C5) -- (D5)--(B6)--(C7)--(B7)--(D6)--(C6)--(B5)--(D4)--(C4)--(B3)--(D2)--(C2)--(B1)--cycle;
\draw[densely dotted,thick,blue] (B2)--(C2)--(B3)--cycle;
\draw[densely dotted,thick,blue] (C3)--(B4)--(B3)--cycle;
\draw[densely dotted,thick,blue] (B4)--(B5)--(C4)--cycle;
\draw[densely dotted,thick,blue] (C5)--(B5)--(B6)--cycle;
\draw[densely dotted,thick,blue] (C6)--(B7)--(B6)--cycle;
\end{tikzpicture}
}

\subfigure[$\chi(M_d(P)\setminus J)=7$]{
\begin{tikzpicture}[scale=0.7]
\coordinate (B1) at (0,0);
\coordinate (C1) at (0.5,-0.866);
\coordinate (D1) at (1.366,-1.366);

\coordinate (B2) at (1,0);
\coordinate (C2) at (1,1);
\coordinate (D2) at (1+0.5,1.866);

\coordinate (B3) at (1+0.866,0.5);
\coordinate (C3) at (1+0.866+0.5,0.5-0.866);
\coordinate (D3) at (1+0.866+1.366,0.5-1.366);

\coordinate (B4) at (1+0.866+1,0.5);
\coordinate (C4) at (1+0.866+1,0.5+1);
\coordinate (D4) at (1+0.866+1+0.5,0.5+1.866);

\coordinate (B5) at (1+0.866+1+0.866,0.5+0.5);
\coordinate (C5) at (1+0.866+1+0.866+0.5,0.5+0.5-0.866);
\coordinate (D5) at (1+0.866+1+0.866+1.366,0.5+0.5-1.366);

\coordinate (B6) at (1+0.866+1+0.866+1,0.5+0.5);
\coordinate (C6) at (1+0.866+1+0.866+1,0.5+0.5+1);
\coordinate (D6) at (1+0.866+1+0.866+1+0.5,0.5+0.5+1.866);

\coordinate (B7) at (1+0.866+1+0.866+1+0.866,0.5+0.5+0.5);
\coordinate (C7) at (1+0.866+1+0.866+1+0.866+0.5,0.5+0.5+0.5-0.866);
\coordinate (D7) at (1+0.866+1+0.866+1+0.866+1.366,0.5+0.5+0.5-1.366);

\coordinate (B8) at (1+0.866+1+0.866+1+0.866+1,0.5+0.5+0.5);
\coordinate (C8) at (1+0.866+1+0.866+1+0.866+1,0.5+0.5+0.5+1);
\coordinate (D8) at (1+0.866+1+0.866+1+0.866+1+0.5,0.5+0.5+0.5+1.866);

\coordinate (B9) at (1+0.866+1+0.866+1+0.866+1+0.866,0.5+0.5+0.5+0.5);


\draw[thick,red] (B2)--(C3)--(D3)--(B4)--(C5) -- (D5)--(B6)--(C7)--(D7)--(B8)--(C8)--(B7)--(D6)--(C6)--(B5)--(D4)--(C4)--(B3)--(D2)--(C2)--(B1)--cycle;
\draw[densely dotted,thick,blue] (B2)--(C2)--(B3)--cycle;
\draw[densely dotted,thick,blue] (C3)--(B4)--(B3)--cycle;
\draw[densely dotted,thick,blue] (B4)--(B5)--(C4)--cycle;
\draw[densely dotted,thick,blue] (C5)--(B5)--(B6)--cycle;
\draw[densely dotted,thick,blue] (C6)--(B7)--(B6)--cycle;
\draw[densely dotted,thick,blue] (C7)--(B7)--(B8)--cycle;
\end{tikzpicture}
}
\subfigure[$\chi(M_d(P)\setminus J)=8$]{
\begin{tikzpicture}[scale=0.7]
\coordinate (B1) at (0,0);
\coordinate (C1) at (0.5,-0.866);
\coordinate (D1) at (1.366,-1.366);

\coordinate (B2) at (1,0);
\coordinate (C2) at (1,1);
\coordinate (D2) at (1+0.5,1.866);

\coordinate (B3) at (1+0.866,0.5);
\coordinate (C3) at (1+0.866+0.5,0.5-0.866);
\coordinate (D3) at (1+0.866+1.366,0.5-1.366);

\coordinate (B4) at (1+0.866+1,0.5);
\coordinate (C4) at (1+0.866+1,0.5+1);
\coordinate (D4) at (1+0.866+1+0.5,0.5+1.866);

\coordinate (B5) at (1+0.866+1+0.866,0.5+0.5);
\coordinate (C5) at (1+0.866+1+0.866+0.5,0.5+0.5-0.866);
\coordinate (D5) at (1+0.866+1+0.866+1.366,0.5+0.5-1.366);

\coordinate (B6) at (1+0.866+1+0.866+1,0.5+0.5);
\coordinate (C6) at (1+0.866+1+0.866+1,0.5+0.5+1);
\coordinate (D6) at (1+0.866+1+0.866+1+0.5,0.5+0.5+1.866);

\coordinate (B7) at (1+0.866+1+0.866+1+0.866,0.5+0.5+0.5);
\coordinate (C7) at (1+0.866+1+0.866+1+0.866+0.5,0.5+0.5+0.5-0.866);
\coordinate (D7) at (1+0.866+1+0.866+1+0.866+1.366,0.5+0.5+0.5-1.366);

\coordinate (B8) at (1+0.866+1+0.866+1+0.866+1,0.5+0.5+0.5);
\coordinate (C8) at (1+0.866+1+0.866+1+0.866+1,0.5+0.5+0.5+1);
\coordinate (D8) at (1+0.866+1+0.866+1+0.866+1+0.5,0.5+0.5+0.5+1.866);

\coordinate (B9) at (1+0.866+1+0.866+1+0.866+1+0.866,0.5+0.5+0.5+0.5);
\coordinate (C9) at (1+0.866+1+0.866+1+0.866+1+0.866+0.5,0.5+0.5+0.5+0.5-0.866);


\draw[thick,red] (B2)--(C3)--(D3)--(B4)--(C5) -- (D5)--(B6)--(C7)--(D7)--(B8)--(C9)--(B9)--(D8)--(C8)--(B7)--(D6)--(C6)--(B5)--(D4)--(C4)--(B3)--(D2)--(C2)--(B1)--cycle;
\draw[densely dotted,thick,blue] (B2)--(C2)--(B3)--cycle;
\draw[densely dotted,thick,blue] (C3)--(B4)--(B3)--cycle;
\draw[densely dotted,thick,blue] (B4)--(B5)--(C4)--cycle;
\draw[densely dotted,thick,blue] (C5)--(B5)--(B6)--cycle;
\draw[densely dotted,thick,blue] (C6)--(B7)--(B6)--cycle;
\draw[densely dotted,thick,blue] (C7)--(B7)--(B8)--cycle;
\draw[densely dotted,thick,blue] (C8)--(B9)--(B8)--cycle;
\end{tikzpicture}
}
\caption{\label{fig:l=5678}Examples of $\chi(M_d(P)\setminus J)\in\{5,6,7,8\}$ used in the proof of Proposition \ref{pro:every}. Here $P$ is the polygon with red edges and $J$ is the set of corresponding blue non-crossing diagonals. }
\end{figure}

For the case of $l<-1$, we consider the polygon $P'$ (see Fig.~\ref{fig:l=-2345678}) with linearly ordered vertices in counter-clockwise direction,  $A_1',A_2,\ldots,A_{3|l|}, A_{3|l|+1}'$, such that $A_1'$ is in the segment $A_1A_2,$ $A_{3|l|+1}'$ is in the segment $A_1A_{3|l|}$, where$A_1,A_2,\ldots,A_{3|l|}$ are the vertices of $P$ above. Then $J\in NC[M_d(P')]$ and
$$\chi(M_d(P')\setminus J)=(-1)^{|P'|+1}\sum_{I\in NC_c[J]} (-1)^{\#I}
=(-1)^{3|l|+1+1}a_{3(|l|-1)}=(-1)^{3|l|}(-1)^{|l|-1}|l|=-|l|=l.$$
This completes the proof of Proposition \ref{pro:every}.

\begin{figure}[H]
\centering
\subfigure[$l=-2$]{
\begin{tikzpicture}[scale=0.7]
\coordinate (P) at (0.5,0);
\coordinate (Q) at (0.5,0.5);

\coordinate (C1) at (0.5,-0.866);
\coordinate (D1) at (1.366,-1.366);

\coordinate (B2) at (1,0);
\coordinate (C2) at (1,1);
\coordinate (D2) at (1+0.5,1.866);

\coordinate (B3) at (1+0.866,0.5);
\coordinate (C3) at (1+0.866+0.5,0.5-0.866);
\coordinate (D3) at (1+0.866+1.366,0.5-1.366);

\coordinate (B4) at (1+0.866+1,0.5);
\coordinate (C4) at (1+0.866+1,0.5+1);
\coordinate (D4) at (1+0.866+1+0.5,0.5+1.866);

\coordinate (B5) at (1+0.866+1+0.866,0.5+0.5);
\coordinate (C5) at (1+0.866+1+0.866+0.5,0.5+0.5-0.866);
\coordinate (D5) at (1+0.866+1+0.866+1.366,0.5+0.5-1.366);

\coordinate (B6) at (1+0.866+1+0.866+1,0.5+0.5);
\coordinate (C6) at (1+0.866+1+0.866+1,0.5+0.5+1);
\coordinate (D6) at (1+0.866+1+0.866+1+0.5,0.5+0.5+1.866);

\coordinate (B7) at (1+0.866+1+0.866+1+0.866,0.5+0.5+0.5);
\coordinate (B8) at (1+0.866+1+0.866+1+0.866+1,0.5+0.5+0.5);
\coordinate (B9) at (1+0.866+1+0.866+1+0.866+1+0.866,0.5+0.5+0.5+0.5);

\draw[thick,red] (B2)--(C3)--(B3)--(D2)--(C2)--(Q)--(P)--cycle;
\draw[densely dotted,thick,blue] (B2)--(C2)--(B3)--cycle;
\end{tikzpicture}
}
\subfigure[$l=-3$]{
\begin{tikzpicture}[scale=0.7]
\coordinate (P) at (0.5,0);
\coordinate (Q) at (0.5,0.5);

\coordinate (C1) at (0.5,-0.866);
\coordinate (D1) at (1.366,-1.366);

\coordinate (B2) at (1,0);
\coordinate (C2) at (1,1);
\coordinate (D2) at (1+0.5,1.866);

\coordinate (B3) at (1+0.866,0.5);
\coordinate (C3) at (1+0.866+0.5,0.5-0.866);
\coordinate (D3) at (1+0.866+1.366,0.5-1.366);

\coordinate (B4) at (1+0.866+1,0.5);
\coordinate (C4) at (1+0.866+1,0.5+1);
\coordinate (D4) at (1+0.866+1+0.5,0.5+1.866);

\coordinate (B5) at (1+0.866+1+0.866,0.5+0.5);
\coordinate (C5) at (1+0.866+1+0.866+0.5,0.5+0.5-0.866);
\coordinate (D5) at (1+0.866+1+0.866+1.366,0.5+0.5-1.366);

\coordinate (B6) at (1+0.866+1+0.866+1,0.5+0.5);
\coordinate (C6) at (1+0.866+1+0.866+1,0.5+0.5+1);
\coordinate (D6) at (1+0.866+1+0.866+1+0.5,0.5+0.5+1.866);

\coordinate (B7) at (1+0.866+1+0.866+1+0.866,0.5+0.5+0.5);
\coordinate (B8) at (1+0.866+1+0.866+1+0.866+1,0.5+0.5+0.5);
\coordinate (B9) at (1+0.866+1+0.866+1+0.866+1+0.866,0.5+0.5+0.5+0.5);

\draw[thick,red] (B2)--(C3)--(D3)--(B4)--(C4)--(B3)--(D2)--(C2)--(Q)--(P)--cycle;
\draw[densely dotted,thick,blue] (B2)--(C2)--(B3)--cycle;
\draw[densely dotted,thick,blue] (C3)--(B4)--(B3)--cycle;
\end{tikzpicture}
}
\subfigure[$l=-4$]{
\begin{tikzpicture}[scale=0.7]
\coordinate (P) at (0.5,0);
\coordinate (Q) at (0.5,0.5);

\coordinate (C1) at (0.5,-0.866);
\coordinate (D1) at (1.366,-1.366);

\coordinate (B2) at (1,0);
\coordinate (C2) at (1,1);
\coordinate (D2) at (1+0.5,1.866);

\coordinate (B3) at (1+0.866,0.5);
\coordinate (C3) at (1+0.866+0.5,0.5-0.866);
\coordinate (D3) at (1+0.866+1.366,0.5-1.366);

\coordinate (B4) at (1+0.866+1,0.5);
\coordinate (C4) at (1+0.866+1,0.5+1);
\coordinate (D4) at (1+0.866+1+0.5,0.5+1.866);

\coordinate (B5) at (1+0.866+1+0.866,0.5+0.5);
\coordinate (C5) at (1+0.866+1+0.866+0.5,0.5+0.5-0.866);
\coordinate (D5) at (1+0.866+1+0.866+1.366,0.5+0.5-1.366);

\coordinate (B6) at (1+0.866+1+0.866+1,0.5+0.5);
\coordinate (C6) at (1+0.866+1+0.866+1,0.5+0.5+1);
\coordinate (D6) at (1+0.866+1+0.866+1+0.5,0.5+0.5+1.866);

\coordinate (B7) at (1+0.866+1+0.866+1+0.866,0.5+0.5+0.5);
\coordinate (B8) at (1+0.866+1+0.866+1+0.866+1,0.5+0.5+0.5);
\coordinate (B9) at (1+0.866+1+0.866+1+0.866+1+0.866,0.5+0.5+0.5+0.5);

\draw[thick,red] (B2)--(C3)--(D3)--(B4)--(C5) -- (B5)--(D4)--(C4)--(B3)--(D2)--(C2)--(Q)--(P)--cycle;
\draw[densely dotted,thick,blue] (B2)--(C2)--(B3)--cycle;
\draw[densely dotted,thick,blue] (C3)--(B4)--(B3)--cycle;
\draw[densely dotted,thick,blue] (B4)--(B5)--(C4)--cycle;
\end{tikzpicture}
}
\subfigure[$l=-5$]{
\begin{tikzpicture}[scale=0.7]
\coordinate (P) at (0.5,0);
\coordinate (Q) at (0.5,0.5);

\coordinate (C1) at (0.5,-0.866);
\coordinate (D1) at (1.366,-1.366);

\coordinate (B2) at (1,0);
\coordinate (C2) at (1,1);
\coordinate (D2) at (1+0.5,1.866);

\coordinate (B3) at (1+0.866,0.5);
\coordinate (C3) at (1+0.866+0.5,0.5-0.866);
\coordinate (D3) at (1+0.866+1.366,0.5-1.366);

\coordinate (B4) at (1+0.866+1,0.5);
\coordinate (C4) at (1+0.866+1,0.5+1);
\coordinate (D4) at (1+0.866+1+0.5,0.5+1.866);

\coordinate (B5) at (1+0.866+1+0.866,0.5+0.5);
\coordinate (C5) at (1+0.866+1+0.866+0.5,0.5+0.5-0.866);
\coordinate (D5) at (1+0.866+1+0.866+1.366,0.5+0.5-1.366);

\coordinate (B6) at (1+0.866+1+0.866+1,0.5+0.5);
\coordinate (C6) at (1+0.866+1+0.866+1,0.5+0.5+1);
\coordinate (D6) at (1+0.866+1+0.866+1+0.5,0.5+0.5+1.866);

\coordinate (B7) at (1+0.866+1+0.866+1+0.866,0.5+0.5+0.5);
\coordinate (B8) at (1+0.866+1+0.866+1+0.866+1,0.5+0.5+0.5);
\coordinate (B9) at (1+0.866+1+0.866+1+0.866+1+0.866,0.5+0.5+0.5+0.5);

\draw[thick,red] (B2)--(C3)--(D3)--(B4)--(C5) -- (D5)--(B6)--(C6)--(B5)--(D4)--(C4)--(B3)--(D2)--(C2)--(Q)--(P)--cycle;
\draw[densely dotted,thick,blue] (B2)--(C2)--(B3)--cycle;
\draw[densely dotted,thick,blue] (C3)--(B4)--(B3)--cycle;
\draw[densely dotted,thick,blue] (B4)--(B5)--(C4)--cycle;
\draw[densely dotted,thick,blue] (C5)--(B5)--(B6)--cycle;
\end{tikzpicture}
}
\subfigure[$l=-6$]{
\begin{tikzpicture}[scale=0.7]
\coordinate (P) at (0.5,0);
\coordinate (Q) at (0.5,0.5);
\coordinate (C1) at (0.5,-0.866);
\coordinate (D1) at (1.366,-1.366);

\coordinate (B2) at (1,0);
\coordinate (C2) at (1,1);
\coordinate (D2) at (1+0.5,1.866);

\coordinate (B3) at (1+0.866,0.5);
\coordinate (C3) at (1+0.866+0.5,0.5-0.866);
\coordinate (D3) at (1+0.866+1.366,0.5-1.366);

\coordinate (B4) at (1+0.866+1,0.5);
\coordinate (C4) at (1+0.866+1,0.5+1);
\coordinate (D4) at (1+0.866+1+0.5,0.5+1.866);

\coordinate (B5) at (1+0.866+1+0.866,0.5+0.5);
\coordinate (C5) at (1+0.866+1+0.866+0.5,0.5+0.5-0.866);
\coordinate (D5) at (1+0.866+1+0.866+1.366,0.5+0.5-1.366);

\coordinate (B6) at (1+0.866+1+0.866+1,0.5+0.5);
\coordinate (C6) at (1+0.866+1+0.866+1,0.5+0.5+1);
\coordinate (D6) at (1+0.866+1+0.866+1+0.5,0.5+0.5+1.866);

\coordinate (B7) at (1+0.866+1+0.866+1+0.866,0.5+0.5+0.5);
\coordinate (C7) at (1+0.866+1+0.866+1+0.866+0.5,0.5+0.5+0.5-0.866);
\coordinate (D7) at (1+0.866+1+0.866+1+0.866+1.366,0.5+0.5+0.5-1.366);

\coordinate (B8) at (1+0.866+1+0.866+1+0.866+1,0.5+0.5+0.5);
\coordinate (C8) at (1+0.866+1+0.866+1+0.866+1,0.5+0.5+0.5+1);
\coordinate (D8) at (1+0.866+1+0.866+1+0.866+1+0.5,0.5+0.5+0.5+1.866);

\coordinate (B9) at (1+0.866+1+0.866+1+0.866+1+0.866,0.5+0.5+0.5+0.5);

\draw[thick,red] (B2)--(C3)--(D3)--(B4)--(C5) -- (D5)--(B6)--(C7)--(B7)--(D6)--(C6)--(B5)--(D4)--(C4)--(B3)--(D2)--(C2)--(Q)--(P)--cycle;
\draw[densely dotted,thick,blue] (B2)--(C2)--(B3)--cycle;
\draw[densely dotted,thick,blue] (C3)--(B4)--(B3)--cycle;
\draw[densely dotted,thick,blue] (B4)--(B5)--(C4)--cycle;
\draw[densely dotted,thick,blue] (C5)--(B5)--(B6)--cycle;
\draw[densely dotted,thick,blue] (C6)--(B7)--(B6)--cycle;
\end{tikzpicture}
}
\subfigure[$l=-7$]{
\begin{tikzpicture}[scale=0.7]
\coordinate (P) at (0.5,0);
\coordinate (Q) at (0.5,0.5);
\coordinate (C1) at (0.5,-0.866);
\coordinate (D1) at (1.366,-1.366);

\coordinate (B2) at (1,0);
\coordinate (C2) at (1,1);
\coordinate (D2) at (1+0.5,1.866);

\coordinate (B3) at (1+0.866,0.5);
\coordinate (C3) at (1+0.866+0.5,0.5-0.866);
\coordinate (D3) at (1+0.866+1.366,0.5-1.366);

\coordinate (B4) at (1+0.866+1,0.5);
\coordinate (C4) at (1+0.866+1,0.5+1);
\coordinate (D4) at (1+0.866+1+0.5,0.5+1.866);

\coordinate (B5) at (1+0.866+1+0.866,0.5+0.5);
\coordinate (C5) at (1+0.866+1+0.866+0.5,0.5+0.5-0.866);
\coordinate (D5) at (1+0.866+1+0.866+1.366,0.5+0.5-1.366);

\coordinate (B6) at (1+0.866+1+0.866+1,0.5+0.5);
\coordinate (C6) at (1+0.866+1+0.866+1,0.5+0.5+1);
\coordinate (D6) at (1+0.866+1+0.866+1+0.5,0.5+0.5+1.866);

\coordinate (B7) at (1+0.866+1+0.866+1+0.866,0.5+0.5+0.5);
\coordinate (C7) at (1+0.866+1+0.866+1+0.866+0.5,0.5+0.5+0.5-0.866);
\coordinate (D7) at (1+0.866+1+0.866+1+0.866+1.366,0.5+0.5+0.5-1.366);

\coordinate (B8) at (1+0.866+1+0.866+1+0.866+1,0.5+0.5+0.5);
\coordinate (C8) at (1+0.866+1+0.866+1+0.866+1,0.5+0.5+0.5+1);
\coordinate (D8) at (1+0.866+1+0.866+1+0.866+1+0.5,0.5+0.5+0.5+1.866);

\coordinate (B9) at (1+0.866+1+0.866+1+0.866+1+0.866,0.5+0.5+0.5+0.5);

\draw[thick,red] (B2)--(C3)--(D3)--(B4)--(C5) -- (D5)--(B6)--(C7)--(D7)--(B8)--(C8)--(B7)--(D6)--(C6)--(B5)--(D4)--(C4)--(B3)--(D2)--(C2)--(Q)--(P)--cycle;
\draw[densely dotted,thick,blue] (B2)--(C2)--(B3)--cycle;
\draw[densely dotted,thick,blue] (C3)--(B4)--(B3)--cycle;
\draw[densely dotted,thick,blue] (B4)--(B5)--(C4)--cycle;
\draw[densely dotted,thick,blue] (C5)--(B5)--(B6)--cycle;
\draw[densely dotted,thick,blue] (C6)--(B7)--(B6)--cycle;
\draw[densely dotted,thick,blue] (C7)--(B7)--(B8)--cycle;
\end{tikzpicture}
}
\subfigure[$l=-8$]{
\begin{tikzpicture}[scale=0.7]
\coordinate (P) at (0.5,0);
\coordinate (Q) at (0.5,0.5);
\coordinate (C1) at (0.5,-0.866);
\coordinate (D1) at (1.366,-1.366);

\coordinate (B2) at (1,0);
\coordinate (C2) at (1,1);
\coordinate (D2) at (1+0.5,1.866);

\coordinate (B3) at (1+0.866,0.5);
\coordinate (C3) at (1+0.866+0.5,0.5-0.866);
\coordinate (D3) at (1+0.866+1.366,0.5-1.366);

\coordinate (B4) at (1+0.866+1,0.5);
\coordinate (C4) at (1+0.866+1,0.5+1);
\coordinate (D4) at (1+0.866+1+0.5,0.5+1.866);

\coordinate (B5) at (1+0.866+1+0.866,0.5+0.5);
\coordinate (C5) at (1+0.866+1+0.866+0.5,0.5+0.5-0.866);
\coordinate (D5) at (1+0.866+1+0.866+1.366,0.5+0.5-1.366);

\coordinate (B6) at (1+0.866+1+0.866+1,0.5+0.5);
\coordinate (C6) at (1+0.866+1+0.866+1,0.5+0.5+1);
\coordinate (D6) at (1+0.866+1+0.866+1+0.5,0.5+0.5+1.866);

\coordinate (B7) at (1+0.866+1+0.866+1+0.866,0.5+0.5+0.5);
\coordinate (C7) at (1+0.866+1+0.866+1+0.866+0.5,0.5+0.5+0.5-0.866);
\coordinate (D7) at (1+0.866+1+0.866+1+0.866+1.366,0.5+0.5+0.5-1.366);

\coordinate (B8) at (1+0.866+1+0.866+1+0.866+1,0.5+0.5+0.5);
\coordinate (C8) at (1+0.866+1+0.866+1+0.866+1,0.5+0.5+0.5+1);
\coordinate (D8) at (1+0.866+1+0.866+1+0.866+1+0.5,0.5+0.5+0.5+1.866);

\coordinate (B9) at (1+0.866+1+0.866+1+0.866+1+0.866,0.5+0.5+0.5+0.5);
\coordinate (C9) at (1+0.866+1+0.866+1+0.866+1+0.866+0.5,0.5+0.5+0.5+0.5-0.866);

\draw[thick,red] (B2)--(C3)--(D3)--(B4)--(C5) -- (D5)--(B6)--(C7)--(D7)--(B8)--(C9)--(B9)--(D8)--(C8)--(B7)--(D6)--(C6)--(B5)--(D4)--(C4)--(B3)--(D2)--(C2)--
(Q)--(P)--cycle;
\draw[densely dotted,thick,blue] (B2)--(C2)--(B3)--cycle;
\draw[densely dotted,thick,blue] (C3)--(B4)--(B3)--cycle;
\draw[densely dotted,thick,blue] (B4)--(B5)--(C4)--cycle;
\draw[densely dotted,thick,blue] (C5)--(B5)--(B6)--cycle;
\draw[densely dotted,thick,blue] (C6)--(B7)--(B6)--cycle;
\draw[densely dotted,thick,blue] (C7)--(B7)--(B8)--cycle;
\draw[densely dotted,thick,blue] (C8)--(B9)--(B8)--cycle;
\end{tikzpicture}
}
\caption{\label{fig:l=-2345678}Examples of $\chi(M_d(P)\setminus J)=l\in\{-2,-3,\ldots,-8\}$ used in the proof of Proposition \ref{pro:every}. Here $P$ is the polygon with red edges and $J$ is the set of corresponding blue non-crossing diagonals. }
\end{figure}

\section{Proof of Theorem \ref{th:main1}}
\label{sec:thm4}

Let $J=\{A_iA_j:j\ne i-1,i,i+1\}$, and without loss of generality we let $i=1$ for simplicity.

(A) We suppose $\chi(M_d\setminus J)\ne 0$.

\textbf{Claim 1}\; $J\subset M_d$, i.e., every chord $A_1A_j$ is a diagonal of $P$, where $j\ne 1,2,n$.

Since $\chi(M_d\setminus J)\ne 0$, Proposition \ref{pro:1} implies that $J\cap M_d$ divides $P$ into convex polygons, with a common vertex $A_1$. Suppose that there exists $j_1\ne 1$ such that $A_1A_{j_1}\not\in M_d(P)$. Then  $A_1A_{j_1}$ is not an edge of these convex sub-polygons. Note that $A_{j_1}$ must be a vertex of a convex sub-polygon. So, $A_1A_{j_1}$ is a diagonal of such convex sub-polygon and thus $A_1A_{j_1}$ is a diagonal of $P$, which is a contradiction.

By Claim 1, we have $\angle A_3A_2A_1<\pi$ and $\angle A_1A_nA_{n-1}<\pi$ (see Fig.~\ref{fig:12n}).
\begin{figure}[H]\centering
\begin{tikzpicture}
\draw (20:3)--(0:3);
\draw (20:3)--(60:3);
\draw (60:3)--(90:2);
\draw (90:2)--(120:3);
\draw (120:3)--(150:3);
\draw (190:2)--(150:3);
\draw (190:2)--(210:3);
\draw (0:0) to (0:3);
\draw[dashed] (0:0) to (20:3);
\draw[dashed] (0:0) to (60:3);
\draw[red,dashed,very thick] (0:0) to (90:2);
\draw[dashed] (0:0) to (120:3);
\draw[dashed] (0:0) to (150:3);
\draw[red,dashed,very thick] (0:0) to (190:2);
\draw (0:0) to (210:3);
\node (A1) at (300:0.3) {$A_1$};
\node (A2) at (0:3.2) {$A_2$};
\node (An) at (210:3.2) {$A_n$};
\node (A3) at (20:3.2) {$A_3$};
\node (An-1) at (190:2.5) {$A_{n-1}$};
\end{tikzpicture}
\caption{\label{fig:12n}Illustration for the proof of Theorem \ref{th:main1} (A). In this polygon, all the dashed lines are collected in $J$, and all the (red) thick dashed lines are collected in $J_c$.}
\end{figure}
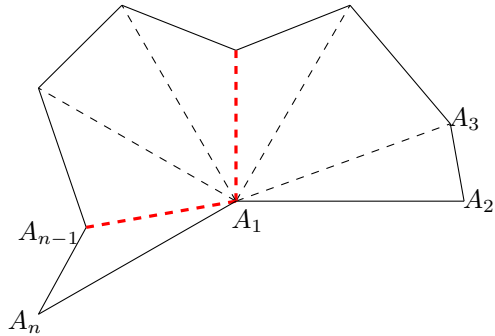

Let
$$J_c=\begin{cases}
\{A_1A_j:\angle A_{j-1}A_jA_{j+1}>\pi,j\ne 1,2,n\},& \exists j\ne 1,2,n\text{ s.t. }\angle A_{j-1}A_jA_{j+1}>\pi,\\
\varnothing,&\text{ otherwise}.
\end{cases}
$$

It is easy to see that $J_c\subset J$ (see Fig.~\ref{fig:12n}).

\textbf{Claim 2}\; If $I\subset J$ divides $P$ into convex polygons, then $I\supset J_c$.

Suppose the contrary, that $I\not\supset J_c$, i.e., there exists $A_1A_j\in J_c\setminus I$. Then there is a sub-polygon containing the vertices $A_{j-1},A_j,A_{j+1}$. Since $\angle A_{j-1}A_jA_{j+1}>\pi$, such sub-polygon must be non-convex and this leads to a contradiction.

\textbf{Claim 3}\; If $J_c$ divides $P$ into convex polygons, then $\chi(M_d\setminus J)\ne 0$ if and only if $J_c=J$.

Since $J_c$ divides $P$ into convex polygons, Claim 2 then implies that $J_c$ is the minimal set of $NC_c[J]$.  
So, Proposition \ref{pro:2} (i.e., Theorem \ref{th:2} (2a)) deduces Claim 3.

Now we divide the proof of Theorem \ref{th:main1} (A) into several cases.

\noindent {\bf  Case 1}. $J_c$ divides $P$ into convex polygons. This is equivalent to $\angle A_2A_1A_n<\pi$ and $\angle A_{j+1}A_jA_{j-1}>\pi$ for any $j\ne 1,2,n$.

Since $\chi(M_d\setminus J)\ne 0$, Claim 3 implies that $J_c=J$. That is, $\angle A_{j+1}A_jA_{j-1}>\pi$ for any $j\ne 1,2,n$. Thus, $(n-3)\pi+\angle A_3A_2A_1+\angle A_2A_1A_n+\angle A_1A_nA_{n-1}<\sum_{j=1}^n \angle A_{j+1}A_jA_{j-1}=(n-2)\pi$. We immediately get $\angle A_2A_1A_n<\pi$. So, $P$ belongs to Class \ref{ex:class2}.

\noindent {\bf  Case 2}. $J_c$ doesn't divide $P$ into convex polygons.

In this case, $J_c\ne J$ and $\angle A_2A_1A_n>\pi$.

\noindent \;\, {\sl\textbf{Case 2.1}}. $J_c=\varnothing$, i.e., $A_{j+1}A_jA_{j-1}<\pi$ for any $j\ne 1,2,n$.

In this case, combining with the fact $J\subset M_d$, we further have $\angle A_{j+1}A_jA_{j-1}<\pi$ for any $j\ne 1$. If $\angle A_2A_1A_n<\pi$, then $P$ is a convex polygon. Thus, Corollary \ref{cor:convex=0} deduces that $\chi(M_d\setminus J)= 0$ unless $P$ is a triangle. Next we assume that $\angle A_2A_1A_n>\pi$.

\; {\sl \textbf{Case 2.1.1}}. $NC_{nc}[J]=\{\varnothing\}$, i.e., $\angle A_3A_1A_n<\pi$ and $\angle A_2A_1A_{n-1}<\pi$ (see Fig.~\ref{fig:class1-1}).

In this case, $\chi(M_d\setminus J)=(-1)^{|P|}$.

\; {\sl \textbf{Case 2.1.2}}. $NC_{nc}[J]\setminus\{\varnothing\}\ne\varnothing$.

 Then each maximal $I\in NC_{nc}[J]$ possesses the form $\{A_1A_3,\ldots,A_1A_i\}\cup \{A_1A_j,\ldots,A_1A_{n-1}\}$ with $3\le i$ and $j\le n-1$, and the only non-convex sub-polygon is $A_1A_iA_{i+1}\cdots A_j$ with $\angle A_iA_1A_j>\pi$. Here the first part $\{A_1A_3,\ldots,A_1A_i\}$ or the second part $\{A_1A_j,\ldots,A_1A_{n-1}\}$ may be empty but cannot be both empty, and if the two parts are both nonempty then $i<j$. So, each maximal $I\in NC_{nc}[J]$ contains the diagonal $A_1A_3$ or the diagonal $A_1A_{n-1}$. Let $I_1,\ldots,I_m\in NC_{nc}[J]$ be all the maximal sets. If $\cap_{i=1}^mI_i\ne \varnothing$, then Corollary  \ref{cor:intersection-maximal} implies $\chi(M_d\setminus J)=0$.
 So, $\cap_{i=1}^mI_i= \varnothing$, i.e., there exists $i,j\in \{1,\ldots,m\}$ such that $A_1A_3\not\in I_i$ and $A_1A_{n-1}\not\in I_j$, and clearly, such $i$ and $j$ are unique.  Without loss of generality, we may assume that $A_1A_3\not\in I_m$ and $A_1A_{n-1}\not\in I_1$. Then $A_1A_3\in I_1$ and $A_1A_{n-1}\in I_m$.

If $m>2$, then for any $2\le i\le m-1$, $\{A_1A_3,A_1A_{n-1}\}\subset I_i$. Note that $\xi(I_{i_1}\cap\cdots\cap I_{i_k})=1$ $\Leftrightarrow$ $\xi(I_{i_1}\cap\cdots\cap I_{i_k})\ne 0$
 $\Leftrightarrow$ $\{I_1,I_m\}\subset \{I_{i_1},\ldots, I_{i_k}\}$. It follows from Proposition \ref{pro:xi-eta} that
\begin{align*}
\chi(M_d\setminus J)&=\sum_{k=1}^m (-1)^{k-1}\sum_{1\le i_1<\cdots<i_k\le m}\xi(I_{i_1}\cap\cdots\cap I_{i_k})
=\sum_{k=2}^{m} (-1)^{k-1}\sum_{1=i_1<\cdots<i_k= m}1
\\&=\sum_{k=2}^{m} (-1)^{k-1}{m-2\choose k-2}=-(-1+1)^{m-2}=0,
\end{align*}
which is a contradiction.

Thus, $m=2$, and we can assume  $I_1=\{A_1A_3,\ldots,A_1A_i\}$ and $I_2=\{A_1A_j,\ldots,A_1A_{n-1}\}$. Obviously, $\angle A_3A_1A_{n-1}<\pi$, $\angle A_3A_1A_n>\pi$ and $\angle A_2A_1A_{n-1}>\pi$ (see Fig.~\ref{fig:class1-2}). In this case, $\chi(M_d\setminus J)=(-1)^{|P|}(\xi(I_1)+\xi(I_2)-\xi(I_1\cap I_2))=(-1)^{|P|+1}$.

\noindent \;\, {\sl\textbf{Case 2.2}}. $J_c\ne\varnothing$.

Assume $J_c=\{A_1A_{i_1},\ldots,A_1A_{i_k}\}$, where $3\le i_1<\cdots<i_k\le n-1$. For simplicity, we set $i_0=2$ and $i_{k+1}=n-1$. Note that $\sum_{s=0}^{k}\angle A_{i_s}A_1A_{i_{s+1}}=\angle A_2A_1A_n<2\pi$. So, there is at most one $s\in \{0,1,\ldots,k\}$ such that $\angle A_{i_s}A_1A_{i_{s+1}}>\pi$. If for any $s\in \{0,1,\ldots,k\}$, $\angle A_{i_s}A_1A_{i_{s+1}}<\pi$, then $J_c$ divides $P$ into convex polygons, which contradicts to the assumption of Case 2.

Hence, there exists a unique $t\in \{0,1,\ldots,k\}$ such that $\angle A_{i_t}A_1A_{i_{t+1}}>\pi$ (see Fig.~\ref{fig:glue}). Now we shall prove that for any $j\in \{3,\ldots, i_t\}\cup\{i_{t+1},\ldots,n-2\}$, $\angle A_{j+1}A_jA_{j-1}>\pi$ and thus $J_c=\{A_1A_j:j=3,\ldots, i_t,i_{t+1},\ldots,n-2\}$. If not, then there exists $j_0\in \{3,\ldots, i_t\}\cup\{i_{t+1},\ldots,n-2\}$ such that $\angle A_{j_0+1}A_{j_0}A_{j_0-1}<\pi$, i.e., $A_1A_{j_0}\not\in J_c$. Thus, for any minimal set $I\in NC_c[J]$, $A_1A_{j_0}\not\in I$. Suppose $J_1,\ldots,J_m\in NC_c[J]$ are all the minimal sets. Then $\cup_{i=1}^mJ_i\ne J$ and thus  Corollary \ref{cor:union-minimal} implies that $\chi(M_d\setminus J)=0$, which is a contradiction.

\begin{figure}[H]
\centering
\begin{tikzpicture}[scale=1.5]
\draw (0,0) to (-0.5,-1);
\draw (0,0) to (0.5,-1);
\draw (-0.5,-0.5) to (-0.8,-0.4);
\draw (0.5,-0.5) to (0.8,-0.4);
\draw (-1.5,-0.5) to  (-0.8,-0.4);
\draw (1.5,-0.5) to (0.8,-0.4);
\draw (-1.5,-0.5) to (-0.9,1.2);
\draw (1.5,-0.5) to ( 0.9,1.2);
\draw (-0.9,1.2) to (0,1.2);
\draw ( 0.9,1.2) to (0,1.2);
\draw (-0.4,-0.64) to (-0.5,-0.5);
\draw (0.4,-0.64) to (0.5,-0.5);
\draw (-0.4,-0.64) to (-0.5,-1);
\draw (0.4,-0.64) to (0.5,-1);
\draw[dashed] (0,0) to (-0.8,-0.4);
\draw[dashed] (0,0) to (0.8,-0.4);
\node (A1) at (0,0.2) {$A_1$};
\node (An) at (-0.4,-1.15) {\small $A_n$};
\node (A2) at (0.4,-1.15) {\small $A_2$};
\node (Ait) at (0.85,-0.2) {\small $A_{i_t}$};
\node (Ait+1) at (-0.85,-0.2) {\small $A_{i_{t+1}}$};
\end{tikzpicture}
\caption{\label{fig:glue}Illustration for the proof of Theorem \ref{th:main1} (A). }
\end{figure}
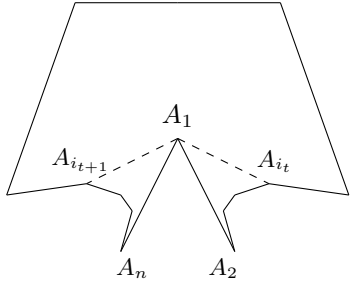

Then Proposition \ref{pro:J'J} implies that $\chi(M_d\setminus J)=\chi(M_d(A_1 A_{i_t}\cdots A_{i_{t+1}})\setminus (J\setminus J_c))$. Note that the sub-polygon $P':=A_1 A_{i_t}\cdots A_{i_{t+1}}$  (see Fig.~\ref{fig:glue}) fulfils the assumption of Case 2.1, i.e., $\angle A_{i_t}A_1A_{i_{t+1}}>\pi$ (fulfils the assumption of Case 2) and $A_{j+1}A_jA_{j-1}<\pi$ for any $j\ne 1,i_t,i_{t+1}$
 (further fulfils the assumption of Case 2.1). And note that the sub-polygons $A_1A_2\cdots A_{i_t}$ (if $i_t\ne 1,2$) and $A_1 A_{i_{t+1}}\cdots A_n$ (if $i_{t+1}\ne n,1$) satisfy the assumption of Case 1  (see Fig.~\ref{fig:glue}). In consequence, $P$ belongs to Class \ref{ex:class6}.
~\\

(B) If $P$ is a convex polygon, then $M_e=\varnothing$,  $\chi(M_e\setminus J)=\chi(\varnothing)=1$, and thus the statement obviously holds.

Next we focus on the non-convex case. Then the boundary of the convex hull of $P$ forms a convex polygon which is denoted by $Pconv(P)$, and each edge of the convex polygon is either an edge of $P$ or an epigonal of $P$. Since $\chi(M_e\setminus J)\ne 0$, all the edges of the convex polygon $Pconv(P)$ which are the epigonals of $P$ must belong to $J=\{A_1A_j:j\ne 1,2,n\}$. Thus, $A_1$ is a vertex of $Pconv(P)$  and the number of such epigonals which are edges of $Pconv(P)$ is at most two.

We may assume without loss of generality that there are exact two epigonals which are edges of $Pconv(P)$ with the common vertex $A_1$, denoted by $A_1A_{j_1}$ and $A_1A_{j_2}$. Then there are two polygons between $P$ and $Pconv(P)$, denoted them by $P^1$ and $P^2$ which respectively possesses the edges $A_1A_{j_1}$ and $A_1A_{j_2}$.

So, by Lemma \ref{lem:e}, $ \chi(M_e(P))=\chi(M_d(P^1\setminus J))\chi(M_d(P^2\setminus J))$, and thus we have $\chi(M_d(P^1\setminus J))\ne 0$ and $\chi(M_d(P^2\setminus J))\ne 0$. Theorem \ref{th:main1} (A) shows that $P^1$ and $P^2$ must belong to Class \ref{ex:class1} or Class \ref{ex:class2} or Class \ref{ex:class6}. Note that the sum of the angles at $A_1$ of $P^1$ and the angles at $A_1$ of $P^2$ is less that $\angle A_{j_1}A_1A_{j_2}<\pi$. So, $P^1$ and $P^2$ must belong to Class \ref{ex:class2}, and thus $P$ must belong to Class \ref{ex:class3}.
~\\

(C)  If $A_2A_n\not\in M_d$, then $P$ is non-convex and $M_d\setminus\{ A_2A_n\}=M_d$. Hence, $\chi(M_d\setminus\{ A_2A_n\})=\chi(M_d)=0$.

Next we assume that $A_2A_n\in M_d$. Then $\angle A_2A_1A_n< \pi$.

If $P$ is a convex polygon, then Corollary \ref{cor:convex=0} deduces that $\chi(M_d\setminus\{ A_2A_n\})=0$.

If $P$ is a non-convex polygon and $\chi(M_d\setminus\{ A_2A_n\})\ne 0$, then Proposition \ref{pro:1} implies that $A_2A_n$ divides $P$ into convex polygons. Hence, the sub-polygon $A_2A_3\cdots A_n$ is convex, and as a consequence, $P$ belongs to Class \ref{ex:class4}.
~\\

(D) If $P$ is a convex polygon, then $\chi(M_e\setminus\{ A_2A_n\})\ne0$. So, we only concentrate on the non-convex case.

If $\chi(M_e\setminus\{A_2A_n\})\ne0$, then $A_2A_n$ is the unique edge of $Pconv(P)$ which is not an edge of  $P$. Such polygon must belong to Class \ref{ex:class2} or Class \ref{ex:class5}.

For Class \ref{ex:class2}, $\chi(M_e\setminus\{ A_2A_n\})=\chi(M_d(A_2A_3\cdots A_n))=(-1)^{n}$. For Class \ref{ex:class5}, $\chi(M_e\setminus\{ A_2A_n \})=\chi(\varnothing)=1$.

We have completed the proof of Theorem \ref{th:main1}.

\section{Proof of Theorem \ref{d_k}}
\label{sec:furtherresults}

First we give some elementary facts for $a$-diagonals of a polygon $P$.

\begin{remark}
\begin{enumerate}[(1)]
\item Each diagonal is an $1$-diagonal.

\item Each $ab$-diagonal is both $a$-diagonal and $b$-diagonal.

\item A polygon $P$ is called an $a$-polygon if it has $a(n+1)+2$ vertices for some $n\in\mathbb{N}$. Every $a$-diagonal of $P$ divides $P$ into two $a$-polygons.

\item If $P$ is convex and $|P|=a(n+1)+2$, then there exists $n$ $a$-diagonals with one common endpoint.
\end{enumerate}
\end{remark}

Let $P=P_{a(n+1)+2}$ be a convex polygon with $a(n+1)+2$ vertices. Let $d_1(n,a)$ be the number of $a$-diagonals, $d_2(n,a)$ be the number of non-crossing pairs of $a$-diagonals, and, in general, $d_i(n,a)$ be the number of sets of $i$ $a$-diagonals of the polygon which are pairwise non-crossing.

\begin{pro}\label{pro:d_k}
$d_k(n,a)=\frac{1}{k+1}{n\choose k}{a(n+1)+k+1\choose k}$.
\end{pro}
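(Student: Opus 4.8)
The plan is to prove the closed form by a generating-function argument combined with Lagrange inversion, the whole computation being driven by a structural description of the cells cut out by non-crossing $a$-diagonals. First I would record the key fact: if $J$ is any set of pairwise non-crossing $a$-diagonals of the convex polygon $P$ with $|P|=a(n+1)+2$, then every cell of the induced dissection is itself an $a$-polygon. Each side of a cell subtends an arc of $\partial P$ whose number of boundary edges is $\equiv 1\pmod a$ (a boundary edge has arc-length $1$, and an $a$-diagonal has arc-length $ka+1\equiv 1\pmod a$); since the arcs cut out by the sides of one cell tile $\partial P$ and $|P|=a(n+1)+2\equiv 2\pmod a$, a cell with $s$ sides must satisfy $s\equiv 2\pmod a$, i.e. $s=am+2$ for some $m\ge 1$. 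In particular $d_k(n,a)$ depends only on $n$ and $a$, and $k$ non-crossing $a$-diagonals always cut $P$ into exactly $k+1$ cells, each an $a$-polygon.

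Next I would set up the edge-rooted generating function. Fixing a base edge and peeling off the cell containing it, the cell-structure above yields the functional equation
\[
\Phi = x + y\,\frac{\Phi^{a+1}}{1-\Phi^{a}},
\]
where $\Phi=\Phi(x,y)$ is the generating function for dissected $a$-polygons rooted at a pendant side, with $x$ marking external edges and $y$ marking $a$-diagonals. Writing $\Psi=\Phi^{a+1}/(1-\Phi^{a})=\sum_{m\ge1}\Phi^{am+1}$ for the version rooted at a base edge, one checks that $d_k(n,a)=[x^{a(n+1)+1}y^{k}]\Psi=[x^{a(n+1)+1}y^{k+1}]\Phi$, since an $a$-polygon of parameter $n$ has $a(n+1)+2$ boundary edges, one of them the base, and a dissection with $k$ diagonals.

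The third step is Lagrange inversion. Rearranging gives $x=\Phi\,\dfrac{1-(1+y)\Phi^{a}}{1-\Phi^{a}}$, that is $\Phi=x\,H(\Phi)$ with $H(w)=\dfrac{1-w^{a}}{1-(1+y)w^{a}}$ and $H(0)=1$. Treating $y$ as a parameter, applying Lagrange inversion in $x$, and using that $H(w)^{M}$ is a series in $w^{a}$, for $M=a(n+1)+1$ I obtain
\[
[x^{M}]\Phi=\frac1M[w^{M-1}]H(w)^{M}=\frac1M[v^{\,n+1}]\Big(\tfrac{1-v}{1-(1+y)v}\Big)^{M},\qquad v=w^{a}.
\]
Expanding $\tfrac{1-v}{1-(1+y)v}=1+\tfrac{yv}{1-(1+y)v}$, extracting $[v^{n+1}]$ and then $[y^{k+1}]$, and applying the elementary identity $\binom{n}{q}\binom{n-q}{k-q}=\binom{n}{k}\binom{k}{q}$ together with the Vandermonde convolution $\sum_{q}\binom{M}{q+1}\binom{k}{q}=\binom{M+k}{k+1}$, the sum collapses to $d_k(n,a)=\tfrac1M\binom nk\binom{M+k}{k+1}=\tfrac1{k+1}\binom nk\binom{a(n+1)+k+1}{k}$, which is the assertion.

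I expect the main obstacle to be the combinatorial bookkeeping of the generating function, namely establishing the cell-structure lemma and verifying that the rooted peeling-off matches the functional equation exactly (that rooting at a fixed edge neither over- nor under-counts, and that the relevant branch is the one with $\Phi(0)=0$ required for Lagrange inversion). Once the functional equation is correct the coefficient extraction is routine: the potentially messy alternating contribution of $(1-v)^{M}$ simplifies cleanly through Vandermonde's identity, so no delicate hypergeometric manipulation is needed. As an alternative, or as a check, one could instead assume the recurrence in Theorem \ref{d_k} and verify that the proposed formula satisfies it together with the initial data $d_0(n,a)=1$ and $d_k(0,a)=\delta_{k,0}$; but that route requires evaluating a double convolution of products of binomials, which is more cumbersome than the single Vandermonde sum above, so I would present the generating-function proof as primary.
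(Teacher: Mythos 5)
Your proof is correct, but it follows a genuinely different route from the paper's. The paper disposes of Proposition \ref{pro:d_k} in two lines by citing Corollary 6 of \cite{ZWZ}: the number of dissections of $P_{a(n+1)+2}$ into an \emph{ordered} sequence of sub-polygons of prescribed sizes $P_{ai_1+2},\ldots,P_{ai_{k+1}+2}$ equals $\frac{1}{k+1}\binom{a(n+1)+k+1}{k}$ regardless of the composition $(i_1,\ldots,i_{k+1})$, so it only remains to multiply by the number $\binom{n}{k}$ of compositions $i_1+\cdots+i_{k+1}=n+1$ of the cell parameters. You instead build everything from scratch: the cell-structure lemma (your arc-length count mod $a$ is right, and the converse --- that a dissection whose cells all have $\equiv 2 \pmod a$ sides uses only $a$-diagonals --- holds by the same argument applied to each diagonal separately, so your $\Phi$ counts exactly the right objects), the rooted functional equation $\Phi=x+y\Phi^{a+1}/(1-\Phi^a)$, Lagrange inversion with $H(w)=(1-w^a)/\bigl(1-(1+y)w^a\bigr)$, and the final collapse. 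I verified the extraction: with $M=a(n+1)+1$, taking $[v^{n+1}]$ gives $\sum_{q\ge1}\binom{M}{q}\binom{n}{q-1}y^q(1+y)^{n+1-q}$, then $[y^{k+1}]$, the identity $\binom{n}{q-1}\binom{n-q+1}{k-q+1}=\binom{n}{k}\binom{k}{q-1}$ and Vandermonde yield $\frac{1}{M}\binom{n}{k}\binom{M+k}{k+1}=\frac{1}{k+1}\binom{n}{k}\binom{a(n+1)+k+1}{k}$, as claimed. The trade-off: the paper's argument is short but not self-contained, resting on a nontrivial enumeration imported from \cite{ZWZ}; yours is longer but self-contained, and it produces the algebraic equation for the generating function as a by-product --- essentially the relation that the paper only derives much later (via a PDE solved by characteristics and residue computations) in the proof of Proposition \ref{pro:calatanequality}. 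So your route would in fact prove Proposition \ref{pro:d_k} and Proposition \ref{pro:calatanequality} with a single tool, at the cost of carefully justifying the rooting/peeling bijection that the paper's citation lets it skip.
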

\begin{proof}
Corollary 6  \cite{ZWZ} (or Theorem 4 \cite{Michael}) gives that the number of different ways of cutting $P_{a(n+1)+2}$ into sub-polygons $P_{ai_1+2},P_{ai_2+2},\ldots,P_{ai_{k+1}+2}$ by diagonals is always $\frac{1}{k+1}{a(n+1)+k+1\choose k}$, where $(i_1,\ldots,i_{k+1})$ is a given ordered array of positive integers.

 Note that $\sum_{j=1}^{k+1}|P_{ai_j+2}|=|P_{a(n+1)+2}|+2k$, i.e., $a(i_1+\cdots+i_{k+1})+2(k+1)=a(n+1)+2+2k$, and this is equivalent to $i_1+i_2+\cdots+i_{k+1}=n+1$. Since the number of positive integer solutions of $i_1+i_2+\cdots+i_{k+1}=n+1$ is ${n\choose k}$, we have $d_k(n,a)=\frac{1}{k+1}{a(n+1)+k+1\choose k}{n\choose k}$. 
\end{proof}

It should be noted that Proposition \ref{pro:d_k} is nothing but Corollary 2 \cite{Przy}. Here we show a new and simple  proof of such result above.

\begin{pro}\label{pro:generalLee}
For any $a\in \mathbb{N}^+$, we have
\[\sum_{k=1}^{n}(-1)^{k-1}d_k(n,a)= 1+(-1)^{n+1}\frac{{a(n+1)\choose n}}{n+1}=1+(-1)^{n+1}d_{n}(n,a-1).\]
\end{pro}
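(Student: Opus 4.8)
The plan is to evaluate the Euler characteristic $\chi(M_d^a)=\sum_{k\ge 0}(-1)^k d_k(n,a)$ in closed form and read the proposition off from it. Since $k$ pairwise non-crossing $a$-diagonals cut $P$ into $k+1$ sub-$a$-polygons, one has $d_k(n,a)=0$ for $k>n$, while $d_0(n,a)=1$; hence $\sum_{k=1}^n(-1)^{k-1}d_k(n,a)=1-\chi(M_d^a)$. Thus it suffices to prove $\chi(M_d^a)=(-1)^n\tfrac{1}{n+1}\binom{a(n+1)}{n}$, after which the identification with $d_n(n,a-1)$ is immediate: by Proposition \ref{pro:d_k}, $d_n(n,a-1)=\tfrac1{n+1}\binom{n}{n}\binom{(a-1)(n+1)+n+1}{n}=\tfrac1{n+1}\binom{a(n+1)}{n}$, and the third displayed quantity coincides with the second.

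For the computation itself I would substitute the closed form of Proposition \ref{pro:d_k} and massage it into a single known binomial evaluation. Writing $N=a(n+1)$, the two crucial rewritings are $\tfrac1{k+1}\binom nk=\tfrac1{n+1}\binom{n+1}{k+1}$ and $\binom{N+k+1}{k}=\binom{N+k+1}{N+1}$; the second is the heart of the matter, since it turns a binomial coefficient whose lower index moves with the summation variable into one with the \emph{fixed} lower index $N+1$. After reindexing $j=k+1$ (the missing $j=0$ term vanishes because $\binom{N}{N+1}=0$), the sum becomes
\[
\chi(M_d^a)=\frac{-1}{n+1}\sum_{j=0}^{n+1}(-1)^{j}\binom{n+1}{j}\binom{N+j}{N+1}.
\]

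The inner sum is then evaluated by the finite-difference identity $\sum_{j=0}^{m}(-1)^{j}\binom{m}{j}\binom{x+j}{r}=(-1)^m\binom{x}{r-m}$, which follows at once from $\sum_j(-1)^j\binom mj(1+z)^{x+j}=(-1)^m z^m(1+z)^x$ by comparing coefficients of $z^r$ (equivalently, from $\Delta^m\binom{t}{r}=\binom{t}{r-m}$ for the forward difference operator $\Delta$). Taking $m=n+1$, $x=N$, $r=N+1$ gives the inner sum the value $(-1)^{n+1}\binom{N}{N-n}=(-1)^{n+1}\binom{N}{n}$, so that $\chi(M_d^a)=\tfrac{(-1)^n}{n+1}\binom{a(n+1)}{n}$, and substituting into $1-\chi(M_d^a)$ yields the claim.

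I expect the only real obstacle to be packaging the algebra cleanly: recognizing that the $k$-dependence of the lower index should be absorbed by the symmetry $\binom{N+k+1}{k}=\binom{N+k+1}{N+1}$ so that a single finite-difference identity applies, and verifying the boundary term at $j=0$. Everything else is routine. An alternative, more self-contained route would induct on $n$ using the convolution recursion for $d_k(n,a)$ stated in Theorem \ref{d_k}, but the direct evaluation above is shorter and avoids propagating the recursion through the alternating sum.
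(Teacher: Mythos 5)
Your proof is correct, and while it runs on the same fuel as the paper's own argument---the absorption identity $\tfrac{1}{k+1}\binom{n}{k}=\tfrac{1}{n+1}\binom{n+1}{k+1}$ followed by a binomial-theorem coefficient extraction---the packaging is genuinely different. The paper evaluates the incomplete sum $\sum_{k=1}^{n}(-1)^{k-1}d_k(n,a)$ directly by residue calculus: it writes $\binom{a(n+1)+k+1}{k}=\mathrm{Res}_0\bigl((1+u)^{a(n+1)+k+1}/u^{k+1}\bigr)$, sums $\bigl(\tfrac{1+u}{u}\bigr)^{k+1}$ under the residue sign, and must carry the correction terms $-1+(n+1)\tfrac{u+1}{u}$ arising from the missing $j=0,1$ terms of the binomial expansion; those corrections are precisely what produce the constant $1$ in the final answer. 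You instead complete the sum to the full Euler characteristic $\chi(M_d^a)=\sum_{k\ge0}(-1)^k d_k(n,a)$, so the constant $1$ is accounted for once and for all by the $d_0$ term, then freeze the lower index via the symmetry $\binom{N+k+1}{k}=\binom{N+k+1}{N+1}$ and apply the classical finite-difference evaluation $\sum_{j=0}^{m}(-1)^j\binom{m}{j}\binom{x+j}{r}=(-1)^m\binom{x}{r-m}$, with no boundary bookkeeping beyond the vanishing $j=0$ term. What your route buys: a cleaner computation (one standard identity, no correction terms), and it directly establishes the formula $\chi(M_d^a)=(-1)^n d_n(n,a-1)$ asserted in Theorem \ref{d_k}, of which the proposition is just a rearrangement. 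What the paper's route buys: it is self-contained at the level of residue manipulations and needs no auxiliary lemma---though since your generating-function proof of the finite-difference identity is essentially the same residue computation in disguise, nothing deeper is actually being assumed on your side either.
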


Proposition \ref{pro:generalLee} can be proved by modifying the ideas in \cite{ZWZ}.

\begin{proof}
\begin{align*}
\sum_{k=1}^{n}(-1)^{k-1}d_k(n,a)&=\sum_{k=1}^{n}(-1)^{k-1}\frac{{n\choose k}}{k+1}{a(n+1)+k+1\choose k}
\\&=\sum_{k=1}^{n}(-1)^{k-1}\frac{{n+1\choose k+1}}{n+1}\res
\left(\frac{(1+u)^{a(n+1)+k+1}}{u^{k+1}}\right)
\\&=\frac{1}{n+1}\res
\left((1+u)^{a(n+1)}\sum_{k=1}^{n}(-1)^{k-1}{n+1\choose k+1}\frac{(1+u)^{k+1}}{u^{k+1}}\right)
\\&=\frac{1}{n+1}\res
\left((1+u)^{a(n+1)}\left[(1-\frac{u+1}{u})^{n+1}-(1-(n+1)\frac{u+1}{u})\right]\right)
\\&=\frac{1}{n+1}\res
\left((1+u)^{a(n+1)}\left[(-\frac{1}{u})^{n+1}-1+(n+1)\frac{u+1}{u}\right]\right)
\\&=\frac{1}{n+1}\left\{(-1)^{n+1} \res
\left(\frac{(1+u)^{a(n+1)n}}{u^{n+1}}\right)+(n+1)\res \left(\frac{(1+u)^{a(n+1)+1}}{u}\right)\right\}
\\&=\frac{1}{n+1}\left\{(-1)^{n+1} {a(n+1)\choose n}+(n+1)\cdot 1\right\}
\\&=1+(-1)^{n+1}\frac{{a(n+1)\choose n}}{n+1}
=1+(-1)^{n+1}d_n(n,a-1),
\end{align*}
where $\res(f(u))$ is the residue of the function $f(u)$ at $u=0$.
\end{proof}

\begin{remark}
Since $d_k(n,1)=d_k(n+1)$ and $d_n(n,0)=0$, Lee's theorem (i.e., Theorem \ref{th:main} (A))
\[{d_1} - {d_2} + {d_3} -  \cdots  + {( - 1)^n}{d_{n - 1}} = 1 + {( - 1)^n}\]
is clearly a special case of Proposition \ref{pro:generalLee} for $a=1$.
\end{remark}

\begin{pro}\label{pro:d-k}
Given $k,n\in \mathbb{N}^+$, we have
$$d_k(n,a)=\frac{a(n+1)+2}{2k}\sum_{i_1+i_2=n-1}\sum_{j_1+j_2=k-1}d_{j_1}(i_1,a)d_{j_2}(i_2,a).$$
\end{pro}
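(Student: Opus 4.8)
The plan is to prove the identity by a double-counting (marking) argument that counts configurations of $k$ pairwise non-crossing $a$-diagonals together with one distinguished, \emph{oriented} diagonal. Write $N=a(n+1)+2=|P|$. Consider the set $\mathcal{T}$ of triples $(J,e,\sigma)$ where $J\in NC[M_d^a]$ with $\#J=k$, where $e\in J$ is a marked diagonal, and where $\sigma$ is a choice of one of the two sides of $e$ (equivalently, an orientation of $e$ given by an ordered pair of its endpoints). Counting $\mathcal{T}$ by first choosing $J$, there are $k$ choices for $e\in J$ and $2$ choices for $\sigma$, so $\#\mathcal{T}=2k\,d_k(n,a)$.

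Next I count $\mathcal{T}$ by first choosing the oriented marked diagonal. A single $a$-diagonal $e$ divides $P$ into two $a$-polygons, and $\sigma$ designates which is $P^1$ and which is $P^2$. If $P^1=P_{a(i_1+1)+2}$ and $P^2=P_{a(i_2+1)+2}$, then counting vertices (the two endpoints of $e$ are shared) gives $|P^1|+|P^2|=N+2$, hence $i_1+i_2=n-1$ with $i_1,i_2\ge 0$. For each ordered pair $(i_1,i_2)$ with $i_1+i_2=n-1$, the number of oriented $a$-diagonals whose marked side $P^1$ has index $i_1$ is exactly $N$: choosing the first endpoint in $N$ ways determines the second uniquely, namely the vertex enclosing $(i_1+1)a$ vertices strictly between them on the $\sigma$-side. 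This uniform count is precisely why the awkward self-symmetric split $i_1=i_2$ needs no separate treatment; the orientation $\sigma$ has already resolved that symmetry.

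It remains to count, for a fixed oriented $e$, the completions to an admissible $J$. Every diagonal of $J\setminus\{e\}$ is non-crossing with $e$, so it lies entirely inside $P^1$ or inside $P^2$; moreover an $a$-diagonal of $P$ contained in $P^1$ is exactly an $a$-diagonal of $P^1$, since the number of vertices between its endpoints is unchanged when passing to the sub-polygon (and conversely). Diagonals in different sides never cross, so the completions are precisely pairs consisting of a non-crossing set of $j_1$ $a$-diagonals of $P^1$ and a non-crossing set of $j_2$ $a$-diagonals of $P^2$ with $j_1+j_2=k-1$; their number is $\sum_{j_1+j_2=k-1}d_{j_1}(i_1,a)\,d_{j_2}(i_2,a)$. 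Summing over all oriented diagonals yields
\[
\#\mathcal{T}=N\sum_{i_1+i_2=n-1}\ \sum_{j_1+j_2=k-1}d_{j_1}(i_1,a)\,d_{j_2}(i_2,a),
\]
and equating the two counts and dividing by $2k$ gives the claimed formula.

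The main obstacle is bookkeeping rather than ideas: one must (i) justify that the restriction of an $a$-diagonal of $P$ to a side is again an $a$-diagonal of the sub-polygon, so that the recursion closes on the same quantities $d_j(i,a)$, and (ii) pin down the multiplicities exactly, which is where introducing $\sigma$ pays off by converting the factor $\tfrac{N}{2k}$ into a clean ordered double sum. As a consistency check, when $k=1$ the inner sum collapses to $\sum_{i_1+i_2=n-1}d_0(i_1,a)d_0(i_2,a)=n$, reproducing $d_1(n,a)=\tfrac12 n\,(a(n+1)+2)$ in agreement with Proposition \ref{pro:d_k}. Alternatively, the identity can be verified purely algebraically by substituting the closed form of Proposition \ref{pro:d_k} and simplifying the resulting binomial convolution, but the marking argument is shorter and explains the shape of the recursion.
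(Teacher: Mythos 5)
Your double-counting argument is correct: the two ways of counting the triples $(J,e,\sigma)$ do give $2k\,d_k(n,a)$ and $\bigl(a(n+1)+2\bigr)\sum_{i_1+i_2=n-1}\sum_{j_1+j_2=k-1}d_{j_1}(i_1,a)\,d_{j_2}(i_2,a)$, and the supporting facts all hold (an $a$-diagonal splits $P$ into two $a$-polygons with indices summing to $n-1$; exactly $a(n+1)+2$ oriented $a$-diagonals realize each ordered pair $(i_1,i_2)$; chords non-crossing with $e$ lie on one side and are exactly the $a$-diagonals of the corresponding convex sub-polygon, since the arc not meeting $e$ keeps its vertex count and divisibility by $a$ on one side of a chord forces it on the other). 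The paper itself omits the proof of this proposition, remarking only that it is the standard generalization of the Catalan-number recurrence, and your marking argument is precisely that standard proof, so your write-up simply supplies, correctly, the details the paper leaves out.
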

\begin{remark}
This result is a generalization of the identity of Catalan's number. The proof is standard and hence we omit it.
\end{remark}

By Proposition \ref{pro:d_k} and Proposition \ref{pro:d-k}, we have the following combinatorial identity. Here, we present another proof by residue theorem and PDE method.
\begin{pro}\label{pro:calatanequality}
Given $n,i\in \mathbb{N}^+$, we have
\begin{equation}\label{eq:calatanequality}
\frac{{a(n + 1) + (i + 1)\choose i}{n\choose i}}{{i + 1}} = \frac{{a(n + 1) + 2}}{{2i}}\sum\limits_{
\scriptsize{\begin{array}{c} {n_1} + {n_2} = n - 1,\\ {i_1} + {i_2} = i - 1\end{array}}} { {\frac{{{a({n_1} + 1) + ({i_1} + 1)\choose {i_1}}{a({n_2} + 1) + ({i_2} + 1)\choose {i_2}}{{n_1}\choose {i_1}}{{n_2}\choose {i_2}}}}{{({i_1} + 1)({i_2} + 1)}}} }.
\end{equation}
\end{pro}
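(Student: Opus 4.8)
The plan is to read the claimed identity as nothing more than the equality of two expressions for one and the same counting number $d_i(n,a)$, so that no independent combinatorial argument is required. Concretely, I would begin from Proposition~\ref{pro:d-k}, after relabelling its summation indices to match the present statement: writing the outer index as $i$ (in place of $k$), the ``$n$-type'' inner indices as $n_1,n_2$ (in place of $i_1,i_2$), and the ``$i$-type'' inner indices as $i_1,i_2$ (in place of $j_1,j_2$), that proposition reads
$$d_i(n,a)=\frac{a(n+1)+2}{2i}\sum_{n_1+n_2=n-1}\sum_{i_1+i_2=i-1}d_{i_1}(n_1,a)\,d_{i_2}(n_2,a).$$

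Next I would substitute the closed form of Proposition~\ref{pro:d_k}, namely $d_k(m,a)=\frac{1}{k+1}\binom{m}{k}\binom{a(m+1)+k+1}{k}$, into every $d$-term above. On the left this turns $d_i(n,a)$ into $\frac{1}{i+1}\binom{n}{i}\binom{a(n+1)+i+1}{i}$, which is precisely the left-hand side of \eqref{eq:calatanequality}. On the right, each factor $d_{i_1}(n_1,a)$ becomes $\frac{1}{i_1+1}\binom{n_1}{i_1}\binom{a(n_1+1)+i_1+1}{i_1}$ and likewise for $d_{i_2}(n_2,a)$, so the double sum reproduces verbatim the summand on the right-hand side of \eqref{eq:calatanequality}. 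Equating the two resulting expressions for $d_i(n,a)$ then gives the asserted identity.

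The only points deserving attention are bookkeeping matters rather than genuine obstacles. First, I would confirm that the index ranges agree: the summations run over ordered pairs of nonnegative integers summing to $n-1$ and $i-1$ respectively, and the boundary cases $i_1=0$ or $i_2=0$ correspond to $d_0(\cdot,a)=1$, consistent with $\frac{1}{0+1}\binom{m}{0}\binom{a(m+1)+1}{0}=1$, so the closed form remains valid there and no special-casing is needed. Second, since both Propositions~\ref{pro:d_k} and \ref{pro:d-k} are established for every convex polygon with $a(n+1)+2$ vertices, the quantity $d_i(n,a)$ is genuinely well-defined, and hence the binomial identity \eqref{eq:calatanequality} holds for all $a,n,i\in\mathbb{N}^+$. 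The entire mathematical content lies in Propositions~\ref{pro:d_k} and \ref{pro:d-k}; granting those, the present statement follows immediately by substitution, which is exactly why the paper records it as a direct corollary.
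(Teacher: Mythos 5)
Your derivation is arithmetically correct --- substituting Proposition \ref{pro:d_k} into Proposition \ref{pro:d-k} does reproduce \eqref{eq:calatanequality} verbatim, and your boundary-case check via $d_0(\cdot,a)=1$ is right --- but it is not the paper's proof; it is the paper's one-sentence lead-in (``By Proposition \ref{pro:d_k} and Proposition \ref{pro:d-k}, we have the following combinatorial identity'') written out in detail. The paper's actual proof of Proposition \ref{pro:calatanequality} is an independent analytic argument that never invokes Proposition \ref{pro:d-k}: it sets $a_{n,i}=\frac{1}{i+1}\binom{a(n+1)+i+1}{i}\binom{n}{i}$, forms the generating function $F(x,y)=\sum_{n,i\geq 0}a_{n,i}x^ny^i$, shows that \eqref{eq:calatanequality} is equivalent to the PDE \eqref{eq:partial-F}, solves that PDE by the method of characteristics to reach the parametric representation \eqref{eq:Fxv}, and then recovers the coefficients $a_{n,i}$ from that solution by the residue theorem. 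The difference is not cosmetic. The paper omits the proof of Proposition \ref{pro:d-k}, dismissing it as ``standard,'' so within the paper your argument reduces the identity to a statement that is nowhere actually proved; it is non-circular only because the recursion does admit an independent combinatorial proof (the classical marked-diagonal convolution argument for Catalan-type numbers), and you should state this dependence explicitly, since granting Proposition \ref{pro:d-k} is the entire content of your proof. The paper's longer analytic route buys self-containedness and in fact runs the logic the other way: combined with Proposition \ref{pro:d_k}, the analytic proof of \eqref{eq:calatanequality} supplies a proof of the recursion in Proposition \ref{pro:d-k}, thereby closing the gap left by the omitted combinatorial argument --- which is presumably why the authors took the trouble.
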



\begin{proof}
Let $a_{n,i}=\frac{1}{i+1}{a(n+1)+(i+1)\choose i}{n\choose i}$ and
\begin{equation}\label{eq:F(x,y)}
F(x,y)=\sum_{n,i\geq 0}a_{n,i}x^ny^i.
\end{equation}

Then $xyF^2(x,y)=\sum_{n,i\geq 0}b_{n,i}x^ny^i$. \eqref{eq:calatanequality} is equivalent to $a_{n,i}=\frac{a(n+1)+2}{2i}b_{n,i}$, $n,i>0$, which can be written as $2i a_{n,i}=(a(n+1)+2)b_{n,i},~n,i\geq0$. Note that $i=0$ or $i>n$ implies $b_{n,i}=0$, then it follows from $\sum_{n,i\geq0}(a(n+1)+2)b_{n,i}x^ny^i=a(x^2yF^2)_x+2xy F^2$ and $\sum_{n,i\geq0}2i a_{n,i}x^ny^i=2yF_y$ that  \eqref{eq:calatanequality} is equivalent to $a(x^2yF^2)_x+2xyF^2=2yF_y$. This can be simplified as  $a(2xyF^2+2x^2yFF_x)+2xy F^2=2yF_y$, which can be further written as
\begin{equation}\label{eq:partial-F}
(1+a)xF^2+ax^2FF_x=F_y.
\end{equation}

Next we use the method of characteristics to solve \eqref{eq:partial-F}.

Let $x=x(y)$ solve $\frac{dx}{dy}=-ax^2F(x,y)$. Then $F=F(x(y),y)$ solves $\frac{dF}{dy}=(1+a)F^2x$, and we have $\frac{d (xF)}{dy}=F^2x^2$.

Thus we have $xF=\frac{1}{c_1-y}$ for some $c_1\in \mathbb{R}$, and then $\frac{dx}{dy}=\frac{-ax}{c_1-y}$, $x=c_2(c_1-y)^a$ for some $c_2\in \mathbb{R}$. Therefore, $F=\frac{1}{c_2(c_1-y)^{a+1}}$. Taking the initial datum $y=0$ in \eqref{eq:F(x,y)}, we have $F(x,0)=\sum_{n\geq0}a_{n,0}x^n=\sum_{n\ge 0} x^n=\frac{1}{1-x}$, and thus $\frac{1}{1-c_2c_1^a}=\frac{1}{c_2c_1^{a+1}}$. So, $c_2c_1^a(c_1+1)=1$, $x=\frac{(c_1-y)^a}{c_1^a(c_1+1)}$, $F=\frac{c_1^a(c_1+1)}{(c_1-y)^{a+1}}$.

Let $t=\frac{y}{c_1}$. Then $x=\frac{t(1-t)^a}{y+t}$, $F=\frac{y+t}{(1-t)^{a+1}y}$. Let $t=xv$, we have
\begin{equation}\label{eq:Fxv}
y=v((1-xv)^a-x),\ \ \ F=\frac{1}{(1-xv)((1-xv)^a-x)}.
\end{equation}
According to the implicit function theorem, $v$ and then $F$ must be an analytic function of $(x,y)$ for sufficiently small $|x|$ and $|y|$, thus  \eqref{eq:Fxv} gives a solution of \eqref{eq:partial-F}. Next we prove that the $F$ satisfying \eqref{eq:Fxv} must satisfy \eqref{eq:F(x,y)}.

Let $F=\sum_{i\geq0}f_i(x)y^i$. Then by the residue theorem, we have
\begin{align*}
f_i(x)&=\res(Fy^{-i-1}dy)=\res\left(\frac{(1-xv)^ax-axv(1-xv)^{a-1}}{(1-xv)((1-xv)^a-x)^{i+2}v^{i+1}}dv \right)
\\&=\res\left(\frac{dv}{(1-xv)^{1+a(i+1)}v^{i+1}}\left(1-\frac{x}{(1-xv)^a}\right)^{-i-1}-
\frac{axdv}{(1-xv)^{2+a(i+1)}v^i}\left(1-\frac{x}{(1-xv)^a}\right)^{-i-2} \right)
\\&=\res\left(\frac{dv}{(1-xv)^{1+a(i+1)}v^{i+1}}\sum_{n=i}^{+\infty}\frac{{n\choose i}x^{n-i}}{(1-xv)^{a(n-i)}}-
\frac{axdv}{(1-xv)^{2+a(i+1)}v^i}\sum_{n=i+1}^{+\infty}\frac{{n\choose i+1}x^{n-i-1}}{(1-xv)^{a(n-i-1)}} \right)
\\&=\sum_{n=i}^{+\infty}\res\left(\frac{dv x^{n-i}{n\choose i}}{(1-xv)^{1+a(n+1)}v^{i+1}} \right)-\sum_{n=i+1}^{+\infty}\res\left(\frac{ax^{n-i}dv {n\choose i+1}}{(1-xv)^{2+an}v^i}\right)
\\&=\sum_{n=i}^{+\infty}x^n{n\choose i}{1+a(n+1)+i-1\choose i}-\sum_{n=i+1}^{+\infty}ax^{n-1}{n\choose i+1}{2+an+i-2\choose i}
\\&=\sum_{n=i}^{+\infty}x^n\left({n\choose i}{a(n+1)+i\choose i}-a{n+1\choose i+1}{a(n+1)+i\choose i}\right)
\\&=\sum_{n=i}^{+\infty}x^n {n\choose i}{a(n+1)+i\choose i}\left(1-a\frac{n+1}{i+1}\frac{i}{a(n+1)+1}\right)
\\&=\sum_{n=i}^{+\infty}x^n {n\choose i}{a(n+1)+i\choose i}\frac{a(n+1)+i+1}{(i+1)(a(n+1)+1)}
\\&=\sum_{n=i}^{+\infty}x^n {n\choose i}{a(n+1)+i+1\choose i}\frac{1}{i+1}
=\sum_{n\geq 0}x^na_{n,i}.
\end{align*}
Therefore, \eqref{eq:F(x,y)} holds, and then  \eqref{eq:calatanequality} holds.
\end{proof}

{\bf Acknowledgement}\;\; The third-named author thanks Zipei Nie for interesting discussions.

\section*{Appendix}

\begin{proof}[Proof of Proposition \ref{pro:d1>=1}]
\begin{figure}[H]
\centering
\begin{tikzpicture}[scale=0.8]
\draw (0,0) to (4,2.4);
\draw (5,3) to (4,2.4);
\draw (0,0) to (8,-2);
\draw[red] (0,0) to (4.6,1.4);
\draw[densely dotted] (4,2.4) -- (6.4,-1.6);
\draw[densely dotted]  (5,3)--(8,-2);
\draw[dashed]  (5,3)--(7,2.8);
\draw[dashed]  (8,-2)--(8.8,-1.6);
\draw[dashed]  (9.8,-0.3)--(8.8,-1.6);
\draw[dashed]  (4.6,1.4)--(6.7,1.7);
\draw[dashed]  (7,2.8)--(6.7,1.7);
\draw[dashed]  (4.6,1.4)--(6.9,1.2);
\draw[dashed]  (9.8,-0.3)--(6.9,1.2);
\node (A) at (-0.2,-0.2) {\small $A$};
\node (B) at (5,3.2) {\small$B$};
\node (C) at (8,-2.2) {\small$C$};
\node (B1) at (4,2.6) {\small$B'$};
\node (C1) at (6.4,-1.8) {\small$C'$};
\node (D) at (4.6,1.2) {\small$D$};
\end{tikzpicture}

\caption{\label{fig:exist-diagonal}A figure used in Proposition \ref{pro:d1>=1}.}
\end{figure}
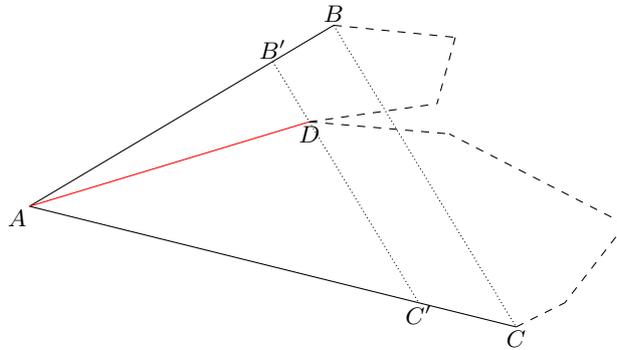
We set $A_0=A_n$ and $A_{n+1}=A_1$. Since $\sum_{i=1}^n \angle A_{i-1}A_iA_{i+1}=(n-2)\pi$, there exists $i_0\in\{1,2,\ldots,n\}$ such that $\angle A_{i_0-1}A_{i_0}A_{i_0+1}\le \frac{(n-2)\pi}{n}<\pi$. For simplicity, we let $B=A_{i_0-1}$, $A=A_{i_0}$ and $C=A_{i_0+1}$. Then $\angle BAC<\pi$, and the segment $BC$ is not an edge of $P_n$ (otherwise, $P_n=\triangle ABC$ and this contradicts with $n\ge 4$).

If $BC$ is a diagonal, then there is nothing need to show.

If $BC$ is not a diagonal, then there is a vertex $D$ inside $\triangle ABC$ with greatest distance to $BC$
(see Fig.~\ref{fig:exist-diagonal}). 
Accordingly, $AD$ lies in the polygonal region, and thus $AD$ must be a diagonal.
\end{proof}

\begin{proof}[Proof of Proposition \ref{pro:triangulation}]
Note that $J$ divides $P$ into some polygons, which can be denoted by $P_1,\ldots,P_k$. Clearly, each diagonal of a sub-polygon $P_i$ is a diagonal of $P$, where $i\in \{1,\ldots,k\}$. Proposition \ref{pro:d1>=1} yields that if there exists $P_i$ with $|P_i|\ge 4$, then $P_i$ has a diagonal, and we can add the diagonal  to $J$. Repeat the process until every sub-polygon is a triangle. At this time, we obtain $J'$, which provides a triangulation of $P_n$. Obviously, $\#J'=n-3$ and $J'\supset J$.
\end{proof}

\begin{proof}[Proof of Proposition \ref{pro:chi-induction}]
By the definition of $\nu_i(A)$, for $i\ge 1$, there holds
\begin{align*}
\nu_i(A)&=\sum_{\# B=i,~B\in NC[A]}1
\\&=\sum_{\# B=i,~v\in B\in NC[A]}1+\sum_{\# B=i,~v\not\in B\in NC[A]}1
\\&=\sum_{\# B'=i-1,~B'\in NC[A_v]}1+\sum_{\# B=i,~ B\in NC[A\setminus \{v\}]}1
\\&=\nu_{i-1}(A_v)+\nu_i(A\setminus \{v\}).
\end{align*}
Then the proof of $\chi(A)=\chi(A\setminus \{v\})-\chi(A_v)$ is immediately completed by taking alternating sum. 
\end{proof}

\subsection*{
Precise construction processes of the polygons used in the proof of Proposition \ref{pro:every} }

We use complex coordinate. Denote $\omega_k=(1+(-1)^k\sqrt{3}\mathbf{i})/{2}$ and set the points $B_k,C_k,D_k$ such that $\forall\ k\in\mathbb{Z}, $ $$B_{2k}-B_{2k-1}=1,\ B_{2k+1}-B_{2k}=e^{\frac\pi6\mathbf{i}},\ C_k-B_k=\omega_k(B_{k+1}-B_{k}),\ D_k-B_k=\omega_k(B_{k+2}-B_{k}).$$
Then we can choose
$$A_{3k-1}^0=B_{2k},\ A_{3k}^0=C_{2k+1},\ A_{3k+1}^0=D_{2k+1},\ A_{3k-3}^1=C_{2k},\ A_{3k-2}^1=D_{2k},\ A_{3k-1}^1=B_{2k+1}, \forall\ k\in\mathbb{Z}.$$
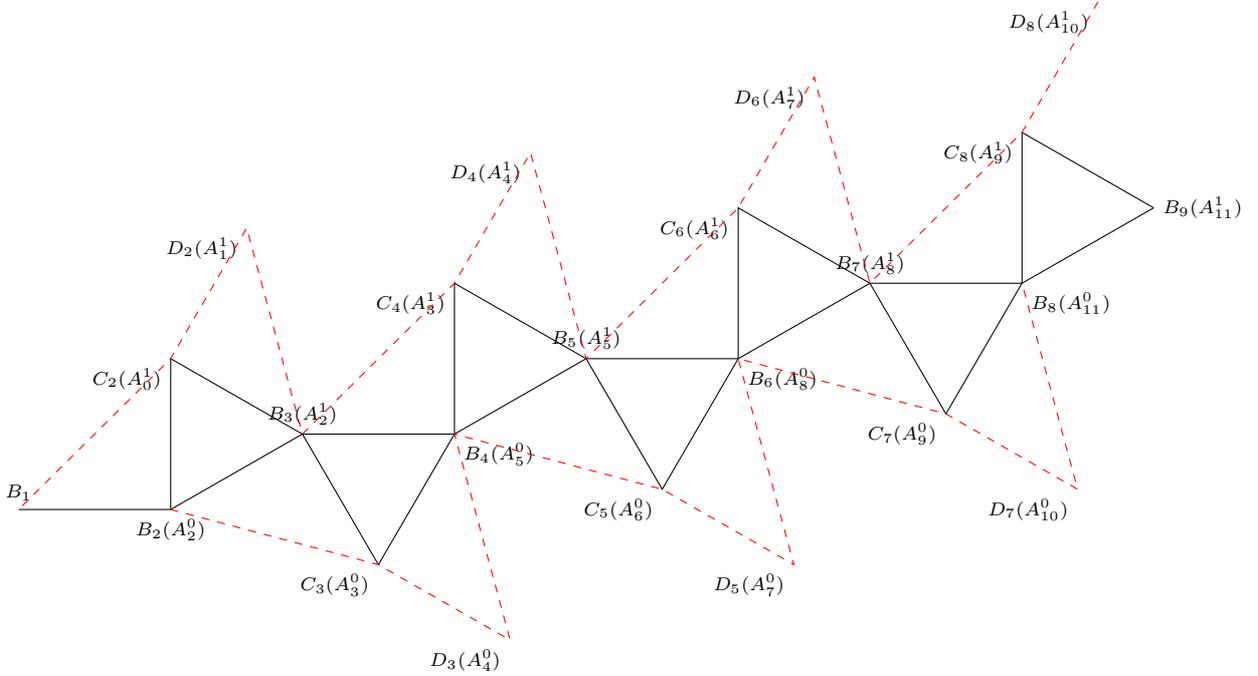
\begin{figure}[H]
\centering
\begin{tikzpicture}[scale=2]
\coordinate (B1) at (0,0);
\coordinate (C1) at (0.5,-0.866);
\coordinate (D1) at (1.366,-1.366);

\coordinate (B2) at (1,0);
\coordinate (C2) at (1,1);
\coordinate (D2) at (1+0.5,1.866);
\coordinate (B3) at (1+0.866,0.5);
\coordinate (C3) at (1+0.866+0.5,0.5-0.866);
\coordinate (D3) at (1+0.866+1.366,0.5-1.366);

\coordinate (B4) at (1+0.866+1,0.5);
\coordinate (C4) at (1+0.866+1,0.5+1);
\coordinate (D4) at (1+0.866+1+0.5,0.5+1.866);

\coordinate (B5) at (1+0.866+1+0.866,0.5+0.5);
\coordinate (C5) at (1+0.866+1+0.866+0.5,0.5+0.5-0.866);
\coordinate (D5) at (1+0.866+1+0.866+1.366,0.5+0.5-1.366);

\coordinate (B6) at (1+0.866+1+0.866+1,0.5+0.5);
\coordinate (C6) at (1+0.866+1+0.866+1,0.5+0.5+1);
\coordinate (D6) at (1+0.866+1+0.866+1+0.5,0.5+0.5+1.866);

\coordinate (B7) at (1+0.866+1+0.866+1+0.866,0.5+0.5+0.5);
\coordinate (C7) at (1+0.866+1+0.866+1+0.866+0.5,0.5+0.5+0.5-0.866);
\coordinate (D7) at (1+0.866+1+0.866+1+0.866+1.366,0.5+0.5+0.5-1.366);

\coordinate (B8) at (1+0.866+1+0.866+1+0.866+1,0.5+0.5+0.5);
\coordinate (C8) at (1+0.866+1+0.866+1+0.866+1,0.5+0.5+0.5+1);
\coordinate (D8) at (1+0.866+1+0.866+1+0.866+1+0.5,0.5+0.5+0.5+1.866);

\coordinate (B9) at (1+0.866+1+0.866+1+0.866+1+0.866,0.5+0.5+0.5+0.5);
\coordinate (C) at (2*\pgfmathresult, 1);

\draw (B1)node [above]{\scriptsize $B_1$}
      --(B2)node [below]{\scriptsize$B_2(A_2^0)$}
      --(B3)node [above]{\scriptsize$B_3(A_2^1)$}
      --(B4)node [below right]{\scriptsize$B_4(A_5^0)$}
      --(B5)node [above]{\scriptsize$B_5(A_5^1)$}
      --(B6)node [below right]{\scriptsize$B_6(A_8^0)$}
      --(B7)node [above]{\scriptsize$B_7(A_8^1)$}
      --(B8)node [below right]{\scriptsize$B_8(A_{11}^0)$}
      --(B9)node [ right]{\scriptsize$B_9(A_{11}^1)$}
      ;
\draw  (B3) -- (C2) node [below left]{\scriptsize$C_2(A_0^1)$} -- (B2)(D2) node [below left]{\scriptsize$D_2(A_1^1)$};
\draw  (B4) --(C3) node [below left]{\scriptsize$C_3(A_3^0)$} -- (B3)  (D3) node [below left]{\scriptsize$D_3(A_4^0)$};
\draw  (B5) --(C4) node [below left]{\scriptsize$C_4(A_3^1)$} -- (B4)  (D4) node [below left]{\scriptsize$D_4(A_4^1)$};
\draw  (B6) --(C5) node [below left]{\scriptsize$C_5(A_6^0)$} -- (B5)  (D5) node [below left]{\scriptsize$D_5(A_7^0)$};
\draw  (B7) --(C6) node [below left]{\scriptsize$C_6(A_6^1)$} -- (B6)  (D6) node [below left]{\scriptsize$D_6(A_7^1)$};     
\draw  (B8) --(C7) node [below left]{\scriptsize$C_7(A_9^0)$} -- (B7)  (D7) node [below left]{\scriptsize$D_7(A_{10}^0)$};
\draw  (B9) --(C8) node [below left]{\scriptsize$C_8(A_9^1)$} -- (B8)  (D8) node [below left]{\scriptsize$D_8(A_{10}^1)$};
\draw[dashed,red] (B2)--(C3)--(D3)--(B4)--(C5) -- (D5)--(B6) -- (C7)--(D7)--(B8);

\draw[dashed,red] (D8)--(C8)--(B7)--(D6)--(C6)--(B5)--(D4)--(C4)--(B3)--(D2)--(C2)--(B1);
\end{tikzpicture}
\caption{\label{fig:Appendix} A figure used in Appendix.  }
\end{figure}

And next we take (for the case of $l>1$)
$$A_1^2=B_1,\ A_{k}^2=A_{k}^0,\ \forall\ 2\leq k\leq (3l+1)/2,\ k\in\mathbb{Z},\  A_{3l-k}^2=A_{k}^1,\ \forall\ 0\leq k\leq (3l-2)/2,\ k\in\mathbb{Z}.$$
Then the polygon with linearly ordered counter-clockwise vertices $A_1^2,A_2^2,\ldots,A_{3l}^2$ is our desired polygon $P$. Although many vertices are collinear, we can avoid this by small perturbations, i.e., replace the vertices $A_1^2,A_2^2,\ldots,A_{3l}^2$ by the points $A_1,A_2,\ldots,A_{3l}$ which are in general position and  $|A_k-A_k^2|<1/100$, $k=1,\ldots,3l$.

\begin{thebibliography}{99}
\bibitem{BE2010}   Benjamin Braun, Richard Ehrenborg, The complex of non-crossing diagonals of a polygon, {\it J. Combin. Theory Ser. A} {\bf 117} (2010), 642--649.

\bibitem{Cayley}  A. Cayley, On the partition of a polygon, {\it Proc.
Lond. Math. Soc. (3)} {\bf 22} (1890), 237--262.

\bibitem{Michael} M. S. Floater and T. Lyche, Divided differences of
inverse functions and partitions of a convex polygon, {\it Math. Comp.}
{\bf 77} (2008), 2295--2308.

\bibitem{Lee} C. W. Lee, The associahedron and triangulations of the
$n$-gon, {\it European J. Combin.} {\bf 10} (1989), 551--560.

\bibitem{Przy} J. H. Przytycki and A. S. Sikora, Polygon dissections
and Euler, Fuss, Kirkman, and Cayley numbers, {\it J. Combin. Theory
Ser. A} {\bf 92} (2000), 68--76.

\bibitem{Shephard} G. C. Shephard, A polygon problem, {\it Amer. Math.
Monthly} {\bf 102} (1995), 505--507.

\bibitem{ZWZ} Dong Zhang, Dongyi Wei and Demin Zhang, Combinatorial Enumeration of Partitions of a Convex Polygon, {\sl J. Integer Sequences} {\bf 18} (2015), Article 15.9.4.




\end{thebibliography}
\end{document}